\documentclass[12pt]{article}
\PassOptionsToPackage{hyphens}{url}
\usepackage[pagebackref=true,colorlinks]{hyperref}
\usepackage{amsmath,amssymb,amsthm,mathtools,amsrefs,mathrsfs}
\usepackage{pifont}
\usepackage{bbm}
\usepackage{tikz}
\usepackage{adjustbox}
\usepackage{stmaryrd}
\usepackage{fancyhdr}
\usepackage{soul}
\usepackage{dsfont}
\usepackage[normalem]{ulem}
\usepackage[bottom]{footmisc}
\usepackage{enumerate}
\usepackage[all]{xy} 
\usetikzlibrary{circuits.ee.IEC}
\hypersetup{
    colorlinks,
    linkcolor={red!50!black},
    citecolor={blue!50!black},
    urlcolor={blue!80!black}
}

\usepackage{iftex}

\ifpdf
  \usepackage[T1]{fontenc}        
\else
  \usepackage{breakurl}           
\fi

\usepackage{tocbasic}
\DeclareTOCStyleEntry[
  beforeskip=0.3em plus 1pt,
  pagenumberformat=\textbf
]{tocline}{section}

\makeatletter
\newcommand*{\shifttext}[2]{%
  \settowidth{\@tempdima}{#2}%
  \makebox[\@tempdima]{\hspace*{#1}#2}%
}
\makeatother
\makeatletter
\renewcommand*\env@matrix[1][\arraystretch]{%
  \edef\arraystretch{#1}%
  \hskip -\arraycolsep
  \let\@ifnextchar\new@ifnextchar
  \array{*\c@MaxMatrixCols c}}
\makeatother

\theoremstyle{plain}
\newtheorem{theorem}[equation]{Theorem}
\newtheorem{lemma}[equation]{Lemma}
\newtheorem{proposition}[equation]{Proposition}
\newtheorem{corollary}[equation]{Corollary}
\theoremstyle{definition}
\newtheorem{definition}[equation]{Definition}
\newtheorem{construction}[equation]{Construction}
\newtheorem{question}[equation]{Question}

\newtheorem{conjecture}[equation]{Conjecture}
\newtheorem{problem}[equation]{Problem}
\newtheorem{example}[equation]{Example}
\newtheorem{exercise}[equation]{Exercise}
\newtheorem*{answer}{Answer}
\newtheorem*{solution}{Solution}
\newtheorem{remark}[equation]{Remark}

\newtheorem{notation}[equation]{Notation}
\newtheorem{noterm}[equation]{Notation and Terminology}

\newcommand\define[1]{\emph{\textbf{#1}}}

\numberwithin{equation}{section}

\let\C=\Chi

\newcommand{\be}{\begin{equation}}
\newcommand{\ee}{\end{equation}}
\def\ba{\begin{align}} 
\def\ea{\end{align}}
\newcommand{\bea}{\begin{eqnarray}}
\newcommand{\eea}{\end{eqnarray}}
\newcommand{\bx}{\begin{example}}
\newcommand{\ex}{\end{example}}
\newcommand{\bex}{\begin{exercise}}
\newcommand{\eex}{\end{exercise}}
\newcommand{\ban}{\begin{answer}}
\newcommand{\ean}{\end{answer}}
\newcommand{\bt}{\begin{theorem}}
\newcommand{\et}{\end{theorem}}
\newcommand{\bc}{\begin{corollary}}
\newcommand{\ec}{\end{corollary}}
\newcommand{\blem}{\begin{lemma}}
\newcommand{\elem}{\end{lemma}}
\newcommand{\bp}{\begin{problem}}
\newcommand{\ep}{\end{problem}}
\newcommand{\bn}{\begin{proposition}}
\newcommand{\en}{\end{proposition}}
\newcommand{\bd}{\begin{definition}}
\newcommand{\ed}{\end{definition}}
\newcommand{\bcon}{\begin{construction}}
\newcommand{\econ}{\end{construction}}
\newcommand{\bq}{\begin{question}}
\newcommand{\eq}{\end{question}}
\newcommand{\bprf}{\begin{proof}}
\newcommand{\eprf}{\end{proof}}
\newcommand{\br}{\begin{remark}}
\newcommand{\er}{\end{remark}}
\newcommand{\bs}{\begin{solution}}
\newcommand{\es}{\end{solution}}
\newcommand{\beqs}{\begin{eqnarray}}
\newcommand{\eeqs}{\end{eqnarray}}
\newcommand{\bnt}{\begin{noterm}}
\newcommand{\ent}{\end{noterm}}
\newcommand{\bnot}{\begin{notation}}
\newcommand{\enot}{\end{notation}}

\newcommand{\<}{\langle}
\renewcommand{\>}{\rangle}

\newcommand{\id}{\mathrm{id}}

\newcommand{\tr}{\operatorname{tr}}
\newcommand{\Tr}{\operatorname{Tr}}

\def\R{{{\mathbb R}}}
\def\C{{{\mathbb C}}}

\def\N{{{\mathbb N}}}

\def\Re{{{\mathfrak{Re}}}}
\def\Im{{{\mathfrak{Im}}}}

\newcommand\aeequals[1]{\underset{\raisebox{0.3ex}[0pt][0pt]{\scriptsize${#1}$}}{=}}

\newcommand{\Ad}{\mathrm{Ad}}

\newcommand{\stoch}{\;\xy0;/r.25pc/:(-3,0)*{}="1";(3,0)*{}="2";{\ar@{~>}"1";"2"|(1.06){\hole}};\endxy\!}

\DeclareFontFamily{OT1}{pzc}{}
\DeclareFontShape{OT1}{pzc}{m}{it}{ <-> s*[1.2] pzcmi7t }{}
\DeclareMathAlphabet{\mathpzc}{OT1}{pzc}{m}{it}
\newcommand{\Alg}[1]{\mathpzc{#1}}

\newcommand{\matr}{\mathbb{M}}

\newcommand{\op}{\mathrm{op}}

\newcommand{\esssup}{\mathrm{ess}\,\mathrm{sup}}

\newcommand{\ben}{\renewcommand{\theenumi}{\alph{enumi}} 
\renewcommand{\labelenumi}{(\theenumi)}\begin{enumerate}}
\newcommand{\een}{\end{enumerate}}

\newcommand\blfootnote[1]{%
  \begingroup
  \renewcommand\thefootnote{}\footnote{#1}%
  \addtocounter{footnote}{-1}%
  \endgroup
}

\newcommand{\bsm}{\begin{smallmatrix}}
\newcommand{\esm}{\end{smallmatrix}}
\newcommand{\E}{\mathcal{E}}

\title{Bayesian inference and retrodiction for faithful states on von~Neumann algebras}
\author{Pradyut Karmakar and Arthur J.~Parzygnat}
\date{August 20, 2026}

\newcommand{\Addresses}{{
 \bigskip
 \footnotesize

   P.~Karmakar, \textsc{%
  Sam Houston State University, 332 A LDB, 1900 Avenue I, Huntsville, Texas, USA}\par\nopagebreak
 \textit{E-mail address}, 
P.~Karmakar: \texttt {karmakar.pradyut@gmail.com}\\

   A.~J.~Parzygnat, \textsc{%
 The Experimental Study Group, 
Massachusetts Institute of Technology, 
Cambridge, Massachusetts 02139, USA}\par\nopagebreak
 \textit{E-mail address}, A.~J.~Parzygnat: \texttt{arthurjp@mit.edu}
}}

\begin{document}
\emergencystretch 2em

\maketitle

\begin{abstract}  
Retrodiction is the act of inferring a cause from its effects, the most common example of which is Bayesian inference. Retrodiction can be defined by its structural process-theoretic properties, which are mathematically captured by category theory. This categorical definition of retrodiction has recently been shown to potentially isolate the Petz recovery map as a unique universal candidate for quantum Bayesian inference. This paper extends these results to the infinite-dimensional setting on von~Neumann algebras. In the process, we provide a pedagogical review of the Petz recovery map in infinite dimensions and its relation to the more commonly used expression in the finite-dimensional setting. We formalize the open question as to whether these categorical axioms for retrodiction do in fact uniquely characterize the Petz recovery map. If such a characterization holds, this would show that Bayesian inversion and the Petz recovery map are structural necessities and not simply useful algorithms for classical and quantum inference. 

\blfootnote{
\emph{Key words:} Bayes' rule; quantum probability; von~Neumann algebra; modular theory; Petz recovery map; KMS inner product; category theory; inference; retrodiction; monoidal category; dagger category} 

\end{abstract}

\vspace{-7mm}
\tableofcontents

\section{Introduction and main results}
\label{sec:intro}

Bayes' rule~\cite{Bayes1763} can be interpreted as the statement that associated with every nowhere-vanishing probability $\{\mathbb{P}(x)\}$ on a finite set $X$ and a set of conditional probabilities $\{\mathbb{P}(y|x)\}$ (read ``the probability of $y$ given $x$'') involving another finite set $Y$, there exists a set of conditional probabilities $\{\mathbb{P}(x|y)\}$, called the \emph{Bayesian inverse} of $\{\mathbb{P}(y|x)\}$, such that $\mathbb{P}(y|x)\mathbb{P}(x)=\mathbb{P}(x|y)\mathbb{P}(y)$ for all $x\in X,y\in Y$, with uniqueness guaranteed whenever $\mathbb{P}(y):=\sum_{x\in X}\mathbb{P}(y|x)\mathbb{P}(x)$ is nowhere vanishing. One of its key structural properties is \emph{functoriality}, which can be described as follows. Given another set of conditional probabilities $\{\mathbb{P}(z|y)\}$ involving a finite set $Z$, Bayes' rule provides a set of conditional probabilities $\{\mathbb{P}(y|z)\}$ satisfying $\mathbb{P}(z|y)\mathbb{P}(y)=\mathbb{P}(y|z)\mathbb{P}(z)$ for all $y\in Y,z\in Z$, where $\mathbb{P}(z):=\sum_{y\in Y}\mathbb{P}(z|y)\mathbb{P}(y)$ (again, assumed to be nowhere vanishing). On the other hand, one also has conditional probabilities $\mathbb{P}(z|x):=\sum_{y\in Y}\mathbb{P}(z|y)\mathbb{P}(y|x)$. Applying Bayes' rule to this set of conditional probabilities yields $\{\mathbb{P}(x|z)\}$ satisfying $\mathbb{P}(z|x)\mathbb{P}(x)=\mathbb{P}(x|z)\mathbb{P}(z)$ for all $x\in X, z\in Z$. Functoriality then says that these three sets of conditional probabilities obtained via Bayes' rule are related via $\mathbb{P}(x|z)=\sum_{y\in Y}\mathbb{P}(x|y)\mathbb{P}(y|z)$. In diagrammatic form, this can be summarized by turning one commutative diagram into another by the process of Bayesian inversion we just described%
\[
\xy0;/r.25pc/:
(-15,-7.5)*+{\{ \mathbb{P}(x) \}}="X";
(0,7.5)*+{\{ \mathbb{P}(y) \}}="Y";
(15,-7.5)*+{\{ \mathbb{P}(z) \}}="Z";
{\ar"X";"Y"^{\{\mathbb{P}(y|x)\}}};
{\ar"Y";"Z"^{\{\mathbb{P}(z|y)\}}};
{\ar"X";"Z"_{\{\mathbb{P}(z|x)\}}};
\endxy
\xmapsto{\text{Bayesian inversion}}
\xy0;/r.25pc/:
(-15,-7.5)*+{\{ \mathbb{P}(x) \}}="X";
(0,7.5)*+{\{ \mathbb{P}(y) \}}="Y";
(15,-7.5)*+{\{ \mathbb{P}(z) \}}="Z";
{\ar"Y";"X"_{\{\mathbb{P}(x|y)\}}};
{\ar"Z";"Y"_{\{\mathbb{P}(y|z)\}}};
{\ar"Z";"X"^{\{\mathbb{P}(x|z)\}}};
\endxy
.
\]
Besides functoriality, Bayesian inversion satisfies many other natural properties including (1) respecting product structure for combining independent systems, (2) specializing to ordinary inversion when the conditional probabilities describe a deterministic and reversible correlation, and (3) being involutive, which means that the Bayesian inverse operation applied twice yields the original set of conditional probabilities. The goals of this paper are to make these observations more precise, extend Bayesian inversion to the setting of infinite-dimensional classical and quantum systems using the theory of von~Neumann algebras and the Petz recovery map~\cite{Pe84}, and provide a conceptual synthesis of Bayesian inversion, the Petz map, and retrodiction using categorical concepts.  

In the context of von~Neumann algebras, in Ref.~\cite{Pe84}, Petz formulated the notion of an adjoint (called ``dual'' in Ref.~\cite{Pe84} and ``transpose channel'' in Ref.~\cite{OhPe93}) of a positive contraction between von~Neumann algebras equipped with faithful states~\cites{Dix81,Tak1}. This notion simultaneously extends the notions of an inverse, Hilbert--Schmidt adjoint, which takes a quantum channel acting on states (Schr\"odinger picture) to a quantum operation acting on observables (Heisenberg picture), and also a conditional expectation~\cites{Mo54,Um54}. In the special case of full matrix algebras, or more generally type I factors, the Hilbert--Schmidt adjoint of Petz' map reproduces what is now known as the Petz recovery map~\cites{Wilde15,WildeQIT16,JRSWW16,JRSWW18}. 

Although it has often been claimed that the Petz recovery map can be viewed as a form of retrodiction or as a quantum extension of Bayes' rule~\cites{LiWi18,Wilde15,WildeQIT16,AwBuSc21}, it was not until recently that this analogy has been made more structurally precise through a variety of perspectives including category theory~\cites{PaBu22,FuPa22a,Pa24} and information geometry~\cites{SASDS23,Ts22,Ts22b}. Namely, for full matrix algebras and faithful states, the Petz recovery map has been shown to satisfy a list of functoriality axioms~\cite{PaBu22}, singling it out among some alternatives in the quantum setting. These results were then extended to nonfaithful states on finite-dimensional $C^*$-algebras in Ref.~\cite{Pa24}. 

In this paper, we address the categorical formulation of the Petz recovery map in the infinite-dimensional setting for faithful states on von~Neumann algebras. Namely, we show that there is a retrodiction functor~\cite{Pa24} (an inverting symmetric monoidal dagger functor) on the category of von~Neumann algebras equipped with normal faithful states as objects and state-preserving normal completely positive and unital maps as morphisms. This dagger functor is precisely the assignment that sends a morphism to its associated Petz recovery map. Interestingly, the connection between the Petz recovery map and category theory was already begun by Petz himself in Ref.~\cite{Pe84}. However, at that time, it seems as though the connection to Bayes' rule, Bayesian inversion, and retrodiction were not made. In this paper, we bring this connection out and highlight how the Petz recovery map in the finite- and infinite-dimensional settings can be expressed purely in categorical terms. Figure~\ref{fig:diagramfunctors} summarizes our contributions. 

\begin{figure}[htb]
\centering
\begin{tikzpicture}[scale=0.86]
\draw[rounded corners=10pt,very thick] (-3,1) node[below,xshift=7pt]{$\mathbf{C}$} rectangle (2,-1);
\draw[rounded corners=10pt,very thick] (0,0) rectangle (5,2) node[below,xshift=-13pt]{$\mathbf{fdQ}$};
\draw[rounded corners=10pt,very thick] (-1,-2) node[above,xshift=7pt]{$\mathbf{I}$} rectangle (3,3);
\draw[rounded corners=10pt,very thick] (-4,-3) node[above,xshift=10pt]{$\mathbf{M}$} rectangle (4,4); 
\draw[rounded corners=10pt,very thick] (-5,-4) rectangle (6,5) node[below,xshift=-9pt]{$\mathbf{Q}$}; 
\draw[->,very thick] ({-1+0.5*cos(100)},{-2+0.5*sin(100)}) arc[start angle=100, end angle=350, radius=0.5cm] node[left,xshift=-22pt]{inverse};
\draw[->,very thick] ({-3+0.5*cos(10)},{1+0.5*sin(10)}) arc[start angle=10, end angle=260, radius=0.5cm] node[above,yshift=20pt,xshift=14pt]{\begin{tabular}{c}Bayesian\\ inverse\end{tabular}};
\draw[->,very thick] ({-4+0.5*cos(100)},{-3+0.5*sin(100)}) arc[start angle=100, end angle=350, radius=0.5cm] node[below,yshift=-7pt,xshift=25pt]{GNS adjoint};
\draw[->,very thick] ({5+0.5*cos(-80)},{2+0.5*sin(-80)}) arc[start angle=-80, end angle=170, radius=0.5cm] node[above,yshift=10pt,xshift=12pt]{Petz};
\draw[->,very thick] ({6+0.5*cos(-80)},{5+0.5*sin(-80)}) arc[start angle=-80, end angle=170, radius=0.5cm] node[right,yshift=1pt,xshift=23pt]{Petz};
\end{tikzpicture}
\caption{A diagram of the categories used in this paper and endofunctors on them. All of these categories are defined in the paper; briefly, $\mathbf{C}$ is a category of classical systems and stochastic (discrete-time) evolution, $\mathbf{Q}$ is a category of quantum systems and quantum channels, $\mathbf{I}$ is a category of invertible dynamics (morphisms are isomorphisms), $\mathbf{fdQ}$ is a category of finite-dimensional quantum systems and quantum channels, and $\mathbf{M}$ is a category of Markov maps (certain covariant evolutions). Each containment signifies a subcategory and the endofunctor specializes to the one provided on that subcategory. For example, the endofunctor labelled as ``Petz'' on $\mathbf{Q}$, which is the Petz retrodiction functor, restricts to the classical ``Bayesian inverse'' endofunctor on $\mathbf{C}$. The ``inverse'' endofunctor on the category $\mathbf{I}$ of isomorphisms sends a morphism to its inverse. When restricted to the subcategory $\mathbf{M}$ consisting of Markov maps, Petz retrodiction coincides with the GNS adjoint. This diagram extends the diagram from Ref.~\cite{PaBu22}, which dealt with the subcategory $\mathbf{fdQ}$ of finite-dimensional von~Neumann algebras.}
\label{fig:diagramfunctors}
\end{figure}

It remains an interesting open question whether these functoriality axioms uniquely characterize the Petz recovery map. If these axioms do characterize the Petz recovery map, one wonders how the formula comes about in the classical and quantum settings from the simple algebraic axioms. In particular, how does the square-root formula in the finite-dimensional case arise? Alternatively, if these axioms do not characterize the Petz recovery map, what other retrodiction functors are admissible? This has important practical and philosophical implications for our general understanding of inference, since the axioms proposed are axioms motivated by our intuition on what properties inference and retrodiction should satisfy. 

Let us briefly explain some physical motivation behind this work. Qubit (and more generally qudit) systems~\cites{Schumacher1995,NiCh11} and bounded operators on Hilbert space associated with the quantum mechanics of a finite number of particles~\cite{Sakurai20} are, to a large extent, described by the set of all bounded operators%
\footnote{Technically, unbounded operators already appear in the description of a single particle in space that has infinite extent~\cites{HallQuantum13,Schechter2002,ReedSimonI1980}.}
on Hilbert space or certain superselection sectors singled out by symmetry~\cite{BaRuSp2007}. 
However, when one is dealing with an infinite number of degrees of freedom, such as in infinite lattice systems and the local algebras associated with spacetime regions in quantum field theory (indeed, quantum field theory is often introduced in terms of an infinite number of harmonic oscillators~\cite[Section 1.2]{Weinberg1995QFT1}), von~Neumann algebras beyond this class become essential in a more accurate mathematical treatment~\cites{Yngvason2005,Witten18,Nair2020}. Although many concepts from quantum information theory have been incorporated into relativistic quantum field theory with great success and providing enormous insight to field theory and quantum gravity~\cites{PeresTerno2004,FewsterVerch2020quantum,OhPe93,Ha76,Page1993IBHR,RyuTakayanagi2006}, there are still several unexplored directions. In particular, Bayesian inference and retrodiction, though ubiquitously important in classical probability and statistics~\cite{Pearl88}, deserves a proper treatment in the setting of von~Neumann algebras. This is what we accomplish in this paper, focusing on the particular quantum Bayes' rule from Ref.~\cite{FuPa22a} associated with the Petz recovery map. We leave the extension of the other Bayes' rules and their associated categorical descriptions to future work.  

The outline of this paper is as follows. Section~\ref{sec:KMSinnerproduct} reviews an inner product associated with every von~Neumann algebra equipped with a faithful normal state. Section~\ref{sec:PetzMap} uses this inner product to construct a formal adjoint to a state-preserving normal completely positive unital (NCPU) map (a quantum channel in the Heisenberg picture), which is now commonly referred to as the Petz recovery map. Section~\ref{sec:retrodictionfunctor} proves the categorical properties satisfied by the Petz recovery map, which in turn proves that it defines a retrodiction functor, a concept which is reviewed. Section~\ref{sec:commutativevNAlg} specializes the Petz recovery map to commutative von~Neumann algebras and illustrates the connection to classical Bayesian inference. 
Section~\ref{sec:MarkovMaps} analyzes the case where the Petz recovery map is applied to Markov maps, which satisfy a certain modular covariance symmetry and are particular examples of covariant quantum operations. 
Section~\ref{sec:conclusion} contains a summary, discussion, and outlook, including the precise conjecture stating that Bayesian inference and the Petz recovery map are characterized by process-theoretic axioms. 
Appendix~\ref{sec:reviewModularTheory} contains a review of concepts from functional analysis, von~Neumann algebras, and modular theory. Most terminology used in this paper is defined in the appendix.

\vspace{3mm}
\textbf{Acknowledgements.}
The authors thank Rudrajit Banerjee, Marcel Bischoff, Simone Del Vecchio, Francesco Fidaleo, Tobias Fritz, Yasuyuki Kawahigashi, Brent Nelson, Jonathan Sorce, and D{\'a}niel Virosztek for discussions. The authors thank the organizers of the Great Lakes Mathematical Physics Meeting (GLaMP) 2025 and the University of Kentucky at Lexington, KY, for their hospitality, since the discussions leading to this paper began at that conference.

\section{The KMS inner product}
\label{sec:KMSinnerproduct}

Before defining the Petz recovery map, we must first describe the inner product with respect to which it is defined as a formal adjoint. The goal of this section is to define this inner product and describe some of its properties that will in turn be useful for proving the functoriality properties of the Petz recovery map. 
Let $\Alg{A}$ be a von~Neumann algebra and let $\omega:\Alg{A}\to\C$ be a faithful normal state on $\Alg{A}$. Then there exists a Hilbert space $\mathcal{H}$, an injective normal unital $*$-homomorphism $\pi:\Alg{A}\to\mathcal{B}(\mathcal{H})$, and a unit vector $\Omega\in\mathcal{H}$ such that $\omega(a)=\<\Omega,\pi(a)\,\Omega\>$ for all $a\in\Alg{A}$ and such that $\Omega$ is cyclic and separating for $\Alg{M}:=\pi(\Alg{A})\subseteq\mathcal{B}(\mathcal{H})$ (an example is the GNS representation reviewed in Appendix~\ref{sec:reviewModularTheory}). When $\pi$ is clear from context, $\pi(a)\,\Omega$ may occasionally be written as $x\,\Omega$, where $x=\pi(a)\in\Alg{M}$, to avoid heavy notation. There is no ambiguity with such shorthand notation because $\pi(a_1 a_2^*)\,\Omega=\pi(a_1)\pi(a_2)^*\,\Omega=x_1 x_2^*\,\Omega$, where $x_j=\pi(a_j)$, and $\pi(1_{\Alg{A}})\,\Omega=1_{\Alg{M}}\,\Omega=\id_{\mathcal{H}}\,\Omega=\Omega$ by the $*$-homomorphism property of $\pi$, and $\pi(a)\,\Omega=x\,\Omega=0$ implies $x=0$ by the separating assumption on $\Omega$, which implies $a=0$ by the injectivity of $\pi$. That said, we do distinguish between $\Alg{A}$ and $\Alg{M}$ since they may have different interpretations in the physical context. 
Set 
\be
\label{eqn:ConnesSelfPolarForm}
\Alg{A}\times\Alg{A}\ni(a_1,a_2)\mapsto\<\!\<a_1,a_2\>\!\>_{\omega}:=\big\<\pi(a_1)\,\Omega,J\, \pi(a_2^*)\,\Omega\big\>\equiv\<x_1\,\Omega,J x_2^*\,\Omega\>,
\ee
where $x_j=\pi(a_j)$, $J$ is the modular conjugation operator on $\mathcal{H}$ associated with $(\Alg{M},\Omega)$ and $\<\;\cdot\;,\;\cdot\;\>$ is the inner product on $\mathcal{H}$. Note that~\eqref{eqn:ConnesSelfPolarForm} is a \define{sesquilinear form} on $\Alg{A}$, i.e., $\<\!\<\;\cdot\;,\;\cdot\;\>\!\>_{\omega}:\Alg{A}\times\Alg{A}\to\C$ defines a function that is conjugate-linear in the first variable and linear in the second variable. The form~\eqref{eqn:ConnesSelfPolarForm} was introduced by Connes in Ref.~\cite{Connes74}, where it was called a \emph{forme autopolaire}, and further discussed in Ref.~\cite{Woronowicz1974}, where it was called a \emph{selfpolar form} (see also Ref.~\cite{Pe84} and Equation 8.19 in Ref.~\cite{OhPe93}). Today, it is more commonly called the \emph{KMS inner product}~\cite{CaMa17} after Kubo, Martin, and Schwinger~\cites{Kubo1957,MartinSchwingerI1959} (the connection between the selfpolar form and the KMS condition will be made briefly in Remark~\ref{rmk:KMS}). 

\blem
\label{lem:ConnesInnerProduct}
Using the notation in the previous paragraph, the selfpolar form~\eqref{eqn:ConnesSelfPolarForm} defines an inner product on $\Alg{A}$. Moreover, $\<\!\<\;\cdot\;,\;\cdot\;\>\!\>_{\omega}$ is independent of the representation $\pi$ on $\mathcal{H}$ and the cyclic and separating vector $\Omega\in\mathcal{H}$ in the sense that if $\tau:\Alg{A}\to\mathcal{B}(\mathcal{K})$ is another injective unital $*$-homomorphism and $\Xi\in\mathcal{K}$ is a vector such that $\omega(a)=\<\Xi,\tau(a)\,\Xi\>$ for all $a\in\Alg{A}$ and such that $\Xi$ is cyclic and separating for $\tau(\Alg{A})$, then $\big\<\pi(a_1)\,\Omega,J_{\Omega}\,\pi(a_2^*)\,\Omega\big\>=\big\<\tau(a_1)\,\Xi,J_{\Xi}\,\tau(a_2^*)\,\Xi\big\>$ for all $a_1,a_2\in\Alg{A}$, where $J_{\Omega}$ and $J_{\Xi}$ are the modular operators associated with $(\pi(\Alg{A}),\Omega)$ and $(\tau(\Alg{A}),\Xi)$, respectively. 
\elem
 
Note that the claim using our shorthand notation would read $\<x_1\,\Omega,J_{\Omega}x_2^*\,\Omega\>=\<x_1\,\Xi,J_{\Xi}x_2^*\,\Xi\>$, but where $x_j=\pi(a_j)$ in the left expression and $x_j=\tau(a_j)$ in the right expression. 
This abuse of notation will not last long, but will occur briefly in the proof.

\bprf[Proof of Lemma~\ref{lem:ConnesInnerProduct}]
The fact that $\<\!\<\;\cdot\;,\;\cdot\;\>\!\>_{\omega}$ is conjugate-linear in the first variable and linear in the second variable follows from the fact that the inner product $\<\;\cdot\;,\;\cdot\;\>$ on $\mathcal{H}$ satisfies these conditions and because $J$ is conjugate-linear. Therefore, to prove that $\<\!\<\;\cdot\;,\;\cdot\;\>\!\>_{\omega}$ defines an inner product, we must check positivity and nondegeneracy. 
For positivity, we have 
\begin{align*}
    \<\!\<a, a\>\!\>_{\omega}
    &=\< x\, \Omega, J x^*\, \Omega \> & &\mbox{by definition~\eqref{eqn:ConnesSelfPolarForm} and $x:=\pi(a)$}\\
    &=\< x\, \Omega, J S x\, \Omega \> & & \mbox{by definition of $S$} \\
    &=\< x\,\Omega, \Delta^{1/2} x\, \Omega \> && \mbox{since $S=J\Delta^{1/2}$ and $J^2=\id_{\mathcal{H}}$} \\
    &=\< \Delta^{1/4} x\, \Omega, \Delta^{1/4} x\, \Omega \> &&\mbox{since $\Delta^{1/2}$ is a self-adjoint positive operator on $\mathcal{H}$}\\
    &=\big\|\Delta^{1/4} x\, \Omega\big\|^2\ge0
\end{align*}
for all $a\in\Alg{A}$. Note that in the above calculations, the operators $S$, $J$, and $\Delta$ all acted on elements in their domains. This proves positivity. To prove nondegeneracy of $\<\!\<\,\cdot\,, \,\cdot\,\>\!\>_{\omega}$, suppose that $a\in\Alg{A}$ satisfies $\<\!\<a, a\>\!\>_{\omega}=0$. Then, by the above calculation, $\big\|\Delta^{1/4} x\, \Omega\big\|=0$ as well. This is equivalent to $\Delta^{1/4} x\, \Omega=0$. However, since $\Delta$ is positive (hence invertible), this is equivalent to $x\, \Omega=0$. Since $\Omega$ is separating, this implies $x=0$. Since $\pi$ is injective and $x=\pi(a)$, this implies $a=0$. This proves nondegeneracy. Therefore, $\<\!\<\;\cdot\;,\;\cdot\;\>\!\>_{\omega}$ defines an inner product on $\Alg{A}$.  

We next show that this inner product does not depend on the representation and cyclic and separating vector. This proof is essentially the same as the proof of the equivalence between two standard forms of von~Neumann algebras as in~\cite[Theorem~2.3]{Ha1975}, and it is also similar to the proofs of equivalence between GNS representations~\cites{GN43,PaGNS,Se47}. Suppose $\mathcal{K},\tau$, and $\Xi$ are as in the statement. Define an operator $U$ from $\mathcal{H}$ to $\mathcal{K}$ with domain $\pi(\Alg{A})\,\Omega\subseteq\mathcal{H}$ given by $U\big(\pi(a)\,\Omega\big):=\tau(a)\,\Xi$ for all $a\in\Alg{A}$, which in shorthand notation reads $Ux\,\Omega=x\,\Xi$ (the vector to the right of $x$ indicates the representation). The operator $U$ is well-defined; 
indeed, if $x_1\,\Omega=x_2\,\Omega$, then $x_1=x_2$ since $\Omega$ is separating. Moreover, the operator $U$ is bounded of operator norm $\lVert U\rVert=1$ on its domain because 
\begin{align*}
    \lVert U x\,\Omega\rVert^2
    &=\lVert x\,\Xi\rVert^2 &&\mbox{by definition of $U$}\\
    &=\<x\,\Xi,x\,\Xi\> &&\mbox{by the relationship between norms and inner products} \\
    &=\<\Xi,x^*x\,\Xi\> &&\mbox{since $x$ acts as a bounded operator on $\mathcal{K}$}\\
    &=\omega(x^*x) &&\mbox{since $\Xi$ is a vector state representation for $\omega$ on $\mathcal{K}$}\\
    &=\<\Omega,x^*x\,\Omega\> && \mbox{since $\Omega$ is a vector state representation for $\omega$ on $\mathcal{H}$}\\
    &=\<x\,\Omega,x\,\Omega\> &&\mbox{since $x$ acts as a bounded operator on $\mathcal{H}$}\\
    &=\lVert x\,\Omega\rVert^2 &&\mbox{by the relationship between norms and inner products}.
\end{align*}
In fact, this chain of arguments proves that $U$ defines an isometry. 
Since the domain of $U$ is dense by the cyclic assumption on $\Omega$, the operator $U$ can be uniquely extended to a bounded operator $\overline{U}:\mathcal{H}\to\mathcal{K}$ that's also an isometry. 
Analogous statements hold for $U^*$, the adjoint of $U$. Since $U^*=U^{-1}$, the same holds for their extensions. Therefore, $\overline{U}$ is a unitary operator. To avoid cumbersome notation, we henceforth denote $\overline{U}$ by $U$. 

Next, in order to prove that the inner products give the same value independent of the vector representation, we use the notation $S_{\Omega}=J_{\Omega} \Delta_{\Omega}^{1/2}$ for the Tomita operator on $\mathcal{H}$ and $S_{\Xi}=J_{\Xi}\Delta_{\Xi}^{1/2}$ for the Tomita operator on $\mathcal{K}$ and their polar decompositions. Then, 
\[
U S_{\Omega} \pi(a)\,\Omega
=U \pi(a^*)\,\Omega
=\tau(a^*)\,\Xi
=S_{\Xi} \tau(a)\,\Xi
=S_{\Xi} U \pi(a)\,\Omega
\]
for all $a\in\Alg{A}$ proves that $U S_{\Omega}=S_{\Xi}U$ since both operators have domain $\mathcal{D}_{S_{\Omega}}$ and agree on that domain. 
Because of this
\[
J_{\Xi}\Delta_{\Xi}^{1/2}
=S_{\Xi}
=U S_{\Omega} U^*
=U J_{\Omega} \Delta_{\Omega}^{1/2} U^*
=(U J_{\Omega} U^*)( U \Delta_{\Omega}^{1/2} U^*)
\]
provides two polar decompositions of $S_{\Xi}$. 
By the uniqueness of polar decomposition discussed in Appendix~\ref{sec:reviewModularTheory}, we conclude that 
\[
J_{\Xi}=U J_{\Omega}U^*
\quad\text{ and }\quad
\Delta_{\Xi}^{1/2}=U\Delta_{\Omega}^{1/2}U^*.
\]
Therefore, 
we also have $U\Delta_{\Omega}^{1/4} U^*=\Delta_{\Xi}^{1/4}$. Therefore, 
\begin{align*}
  \<\!\<a_1, a_2\>\!\>_{\omega}&=  \left\langle \Delta_{\Omega}^{1/4} x_1\, \Omega,  \Delta_{\Omega}^{1/4} x_2\, \Omega \right\rangle &&\mbox{by definition and our earlier calculation} \\
    &=\left\langle U\Delta_{\Omega}^{1/4} x_1\, \Omega,  U\Delta_{\Omega}^{1/4} x_2\, \Omega \right\rangle &&\mbox{since $U$ is unitary}\\
    & =\left\langle \Delta_{\Xi}^{1/4} U x_1\, \Omega, \Delta_{\Xi}^{1/4} U x_2\, \Omega \right\rangle &&\mbox{since $U x_1\, \Omega= x_1\, \Xi$ and $U \Delta_{\Omega}^{1/4}=\Delta_{\Xi}^{1/4} U$} \\
    & =\left\langle \Delta_{\Xi}^{1/4} x_1\, \Xi, \Delta_{\Xi}^{1/4} x_2\, \Xi\right\rangle &&\mbox{by definition of $U$}.
\end{align*}
Thus, the inner product $\<\!\<\;\cdot\;,\;\cdot\;\>\!\>_{\omega}$ is independent of the chosen cyclic and separating vector state representation.
\eprf

\bd
Given a von~Neumann algebra $\Alg{A}$ and a faithful normal state $\omega$ on $\Alg{A}$, the inner product $\<\!\<\;\cdot\;,\;\cdot\;\>\!\>_{\omega}$ from~\eqref{eqn:ConnesSelfPolarForm} and Lemma~\ref{lem:ConnesInnerProduct} is called the 
\define{KMS inner product} associated with $(\Alg{A},\omega)$. 
\ed

\br
\label{rmk:KMS}
A consequence of the proof of the first part of Lemma~\ref{lem:ConnesInnerProduct} shows that 
\[
\<\!\<a_1,a_2\>\!\>_{\omega}=\big\<\Delta^{1/4}x_1\,\Omega,\Delta^{1/4}x_2\,\Omega\big\>
\]
for all $a_1,a_2\in\Alg{A}$, where $x_j=\pi(a_j)$. 
Yet another rewriting of this is 
\[
\<\!\<a_1,a_2\>\!\>_{\omega}
=\big\<x_1\,\Omega,\Delta^{1/2}x_2\,\Omega\big\>
=\big\< \Omega, \Delta^{-1/2}x_1^*\Delta^{1/2}x_2\,\Omega\big\>
=\omega\big(\sigma_{i/2}^{\omega}(a_1)^* a_{2}\big),
\]
where in the second equality we used the fact that $\Delta\, \Omega=\Omega$, and the final equality assumes that $a_1$ is an entire analytic element in $\Alg{A}$~\cite[Section 5.3.1]{BrRo2}. 
This last relation makes a closer connection to the KMS condition $\omega\big( a_{1} \sigma^{\omega}_{i}(a_2)\big)=\omega(a_2 a_1)$~\cite[Definition 5.3.1]{BrRo2} and shows how the selfpolar form of Connes reproduces the KMS two-point function at the midpoint of the standard horizontal strip in the complex plane (see Refs.~\cites{StratilaModular2020,Tak2} for more details). 
Also, writing it in this form makes a closer connection to another large body of work on quantum information geometry~\cites{PetzGhinea2011,SASDS23,LeRu99,Petz96}. Namely, following Ref.~\cite{Wirth2026} for example, we have 
\[
\<\!\<a_1,a_2\>\!\>_{\omega}^{f}
=\big\< x_1 \,\Omega, f(\Delta) y \, \Omega \big\>
\]
where 
\[
[0,\infty)\ni y\mapsto f(y):=\sqrt{y}
\]
(and $f(\Delta)$ is calculated using the functional calculus for positive unbounded operators). 
One can then replace $f$ with other suitable measurable functions $f:[0,\infty)\to[0,\infty)$ such that $f(\Delta)$ remains strictly positive and  
$\mathcal{D}_{f(\Delta)}$ is a suitably large and dense domain. 
Another useful identity is
\be
\label{eqn:CPIPredtostate}
\<\!\<1_{\Alg{A}},a\>\!\>_{\omega}=\omega(a)
\ee
for all $a\in\Alg{A}$.
Note that $\Alg{A}$ equipped with the inner product $\<\!\<\,\cdot\,,\,\cdot\,\>\!\>_{\omega}$ is not necessarily complete, i.e., $\Alg{A}$ equipped with this inner product is not necessarily a Hilbert space (Example~\ref{ex:CPinnerpFDcase} illustrates this point). This point will be important when we attempt to define and compute adjoints later. 
\er

\bx
\label{ex:CPinnerpFDcase}
Here, we re-express the 
KMS inner product on full matrix algebras, following the notation introduced in Example~\ref{ex:fdcaseintro}. We set $\Alg{A}=\matr_{m}$ so that $\Alg{M}=\Alg{A}\otimes\mathds{1}_{m}$. We write $x_1=a_1\otimes\mathds{1}_{m}$ and $x_2=a_2\otimes\mathds{1}_{m}$. We also let $\omega:\Alg{A}\to\C$ be a faithful state given by $\omega(a)=\tr[\rho a]$ for all $a\in\Alg{A}$, where $\rho$ is the full-rank density matrix associated with $\omega$. We diagonalize $\rho$ according to $\rho=\sum_{i}p_{i}\mathbf{u}_{i}\mathbf{u}_{i}^{\dag}$ for some orthonormal basis $\{\mathbf{u}_{i}\}$ and probabilities $\{p_i\}$. In the explicit calculation of $\<\!\<a_1,a_2\>\!\>_{\omega}$, we define $a^{T}$ to be the transpose of the matrix $a$ with respect to the basis $\{\mathbf{u}_{i}\}$. More precisely, writing $a=\sum_{k,l}a_{kl}\mathbf{u}_{k}\mathbf{u}_{l}^{\dag}$, then $a^{T}:=\sum_{k,l}a_{lk}\mathbf{u}_{k}\mathbf{u}_{l}^{\dag}$, i.e., $\mathbf{u}_{i}^{\dag}a^{T}\mathbf{u}_{j}=\mathbf{u}_{j}^{\dag}a\mathbf{u}_{i}$. Thus, the inner product between $a_1$ and $a_2$ then becomes
\[
\begin{split}
\<\!\<a_1,a_2\>\!\>_{\omega}
&=\<x_1\,\Omega,Jx_2^*\,\Omega\>
=\big((a_1\otimes\mathds{1}_{m})\Omega\big)^{\dag}J (a_2^{\dag}\otimes\mathds{1}_{m}) J J \Omega\\
&=\sum_{i,j}^{m}\sqrt{p_{i}p_{j}}\big(\mathbf{u}_{i}^{\dag}\otimes\mathbf{u}_{i}^{\dag}\big)\big(a_1^{\dag}\otimes\mathds{1}_{m}\big)\big(\mathds{1}_{m}\otimes a_2^{T}\big)\big(\mathbf{u}_{j}\otimes\mathbf{u}_{j}\big)\\
&=\sum_{i,j}^{m}\sqrt{p_{i}p_{j}}\big(\mathbf{u}_{i}^{\dag}a_1^{\dag}\mathbf{u}_{j}\big)\big(\mathbf{u}_{i}^{\dag}a_2^{T}\mathbf{u}_{j}\big)\\
&=\sum_{i,j}\sqrt{p_{i}p_{j}}\mathbf{u}_{i}^{\dag}a_1^{\dag}\mathbf{u}_{j}\mathbf{u}_{j}^{\dag}a_2\mathbf{u}_{i}\\
&=\sum_{i,j}\sqrt{p_{i}p_{j}}\Tr\big[\mathbf{u}_{i}\mathbf{u}_{i}^{\dag}a_1^{\dag}\mathbf{u}_{j}\mathbf{u}_{j}^{\dag}a_2\big]\\
&=\Tr\big[\rho^{1/2}a_1^{\dag}\rho^{1/2}a_2\big],
\end{split}
\]
where we have used the cyclicity of trace in the penultimate equality. 
Referring to Remark~\ref{rmk:KMS}, notice that this can also be rewritten as 
\[
\<\!\<a_1,a_2\>\!\>_{\omega}
=\Tr\big[\rho a_1^{\dag}\rho^{1/2}a_2\rho^{-1/2}\big]
=\omega\big( a_{1}^{\dag} \mathcal{J}^{f}_{\rho} (a_2) \big),
\]
where $\mathcal{J}_{\rho}^{f}=f(\mathcal{L}_{\rho}\mathcal{R}_{\rho}^{-1})$, with $f(y):=\sqrt{y}$ for all $y\in[0,\infty)$, and where $\mathcal{L}_{\rho}$ and $\mathcal{R}_{\rho}$ are left and right multiplication by $\rho$, i.e.,   $\mathcal{L}_{\rho}(a):=\rho a$ and $\mathcal{R}_{\rho}(a):= a\rho$. The expression $\mathcal{L}_{\rho}\mathcal{R}_{\rho}^{-1}$ is the way the modular operator $\Delta$ is often written in the literature for finite-dimensional systems~\cites{TKRWV10,GaoWilde2021,CaMa17,Petz96,CaVe20}, but it is important to note that it is not possible to use this same expression for infinite-dimensional systems in general%
\footnote{We know that $\Delta$ is in general unbounded for infinite-dimensional systems (see Example~\ref{ex:fdcaseintro}). Therefore, if we started with an element $a\in\Alg{A}$ represented as $x:=\pi(a)$ in $\Alg{M}\subseteq\mathcal{B}(\mathcal{H})$, then, although $\Delta x \,\Omega$ is a perfectly well-defined vector in $\mathcal{H}$, the operator $\Delta x$ need not be a bounded operator on $\mathcal{H}$, and therefore is \emph{not} generally an element of $\Alg{M}$, which by definition only includes bounded operators. Therefore, writing $\omega(a_1^{\dag} \Delta a_{2})$ in the general infinite-dimensional setting (cf.\ Remark~\ref{rmk:KMS}) requires some care.}, 
and $\Delta$, viewed as an unbounded operator on $\mathcal{H}$, is its appropriate extension to arbitrary von~Neumann algebras. 

From this finite-dimensional case, we can see why $\Alg{A}$ is not in general complete with respect to $\<\!\<\,\cdot\,,\,\cdot\,\>\!\>_{\omega}$. Let $p_{k}=\frac{6}{\pi^2 k^2}$ for all $k\in\N$ and let $\{\mathbf{u}_{m}\}$ be an orthonormal basis for an infinite-dimensional separable Hilbert space. Then $\rho=\sum_{m=1}^{\infty}p_{m}\mathbf{u}_{m}\mathbf{u}_{m}^{\dag}$ is an infinite-dimensional density matrix. To see that $\Alg{A}$, the algebra of all bounded operators on this Hilbert space, is not complete with respect to $\<\!\<\,\cdot\,,\,\cdot\,\>\!\>_{\omega}$, it suffices to construct a Cauchy sequence $a:\N\to\Alg{A}$ that does not converge to an element in $\Alg{A}$ in the associated norm topology. Set $a_{m}=\sum_{k=1}^{m}k^{1/4}\,\mathbf{u}_{k}\mathbf{u}_{k}^{\dag}$. Then each $a_{m}$ is a bounded operator, hence an element of $\Alg{A}$, and (assuming without loss of generality that $n\ge m$)
\[
\lVert a_{n}-a_{m}\rVert^{2}_{\omega}
=\Tr\big[\rho^{1/2}(a_{n}-a_{m})^{\dag}\rho^{1/2}(a_{n}-a_{m})\big]
=\frac{6}{\pi^2}\sum_{j=m+1}^{n}\frac{1}{j^{3/2}}
\]
shows that $a$ is indeed a Cauchy sequence. However, $a_{\infty}:=\lim_{m\to\infty}a_{m}=\sum_{k=1}^{\infty}k^{1/4}\mathbf{u}_{k}\mathbf{u}_{k}^{\dag}$ is an unbounded operator, because $\lVert a_{\infty} \mathbf{u}_{m}\rVert=m^{1/4}$ (and so its operator norm is infinite). Therefore $a_{\infty}$ is not in $\Alg{A}$. Thus, even in this simple example, $\Alg{A}$ is not complete with respect to $\<\!\<\,\cdot\,,\,\cdot\,\>\!\>_{\omega}$.
\ex

We will now state some useful facts regarding the 
KMS inner product that we will use later. 

\begin{lemma}
\label{limilemm} 
Let $\Alg{A}$ be a von~Neumann algebra, let $\omega:\Alg{A}\to\C$ be a faithful normal state, and let $a:A\to\Alg{A}$ be a net that converges to $a_{\infty}:=\lim_{\alpha\in A}a_{\alpha}\in\Alg{A}$ in the weak operator topology%
\footnote{Since $\Alg{A}$ is a subalgebra of bounded operators on a Hilbert space, the weak operator topology on $\Alg{A}$ is defined in terms of the weak operator topology on bounded operators (see Appendix~\ref{sec:reviewModularTheory}).}. 
 Then 
    \[
        \<\!\<\tilde{a}, a_{\infty}\>\!\>_{\omega}=\lim_{\alpha\in A} \<\!\<\tilde{a}, a_{\alpha}\>\!\>_{\omega} \quad\text{ for all }\quad \tilde{a} \in \Alg{A}\,.
    \]
\end{lemma}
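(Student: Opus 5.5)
The plan is to rewrite the KMS inner product as a single vector state evaluated on $a_\alpha$, and then use the fact that vector states are continuous in the weak operator topology. Fix $\tilde a\in\Alg{A}$ and work in the GNS-type representation $\pi$ with cyclic separating vector $\Omega$. By definition~\eqref{eqn:ConnesSelfPolarForm},
\[
\<\!\<\tilde a,a\>\!\>_{\omega}=\big\<\pi(\tilde a)\,\Omega,J\,\pi(a^*)\,\Omega\big\>.
\]
Using the standard fact $J\Alg{M}J=\Alg{M}'$ from modular theory, I would rewrite this as
\[
J\pi(a^*)\,\Omega=J\pi(a)^*J\,\Omega,
\]
where I use $J\Omega=\Omega$. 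Thus $\<\!\<\tilde a,a\>\!\>_{\omega}=\<\pi(\tilde a)\,\Omega,\,J\pi(a)^*J\,\Omega\>$. This is better handled through the alternative expression
\[
\<\!\<\tilde a,a\>\!\>_{\omega}=\big\<\pi(a)\,J\pi(\tilde a)\,\Omega,\,\Omega\big\>^{\!*}\ ,
\]
or more cleanly via antiunitarity of $J$:
\[
\<\xi,J\eta\>=\<\eta,J\xi\>\quad\text{for all }\xi,\eta\in\mathcal{H}.
\]
Applying this gives
\[
\<\!\<\tilde a,a\>\!\>_{\omega}=\big\<\pi(a^*)\,\Omega,\,J\pi(\tilde a)\,\Omega\big\>=\big\<\Omega,\,\pi(a)\,J\pi(\tilde a)\,\Omega\big\>.
\]
This is a matrix element $\<\Omega,\pi(a)\,\eta\>$ with the fixed vectors $\Omega$ and $\eta:=J\pi(\tilde a)\,\Omega$, and it depends linearly on $a$.

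Next I would pass to the limit. The net $a_\alpha\to a_\infty$ in the weak operator topology on $\Alg{A}$, where $\Alg{A}$ is regarded as acting on its own Hilbert space. Weak operator topology convergence in $\pi(\Alg{A})\subseteq\mathcal{B}(\mathcal{H})$ is what is actually needed. If $\Alg{A}$ is concretely given and $\pi$ is a different representation, this is the \textbf{main obstacle}, because a normal $*$-homomorphism need not be WOT-continuous on unbounded nets. I would handle it in one of two ways.

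\emph{Option (a).} Use Lemma~\ref{lem:ConnesInnerProduct} to choose the representation so that $\pi$ is the identity inclusion $\Alg{A}\subseteq\mathcal{B}(\mathcal{K})$. This requires $\omega$ to be a vector state for a cyclic separating vector there, which need not hold in general.

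\emph{Option (b).} Observe that the functional $a\mapsto\<\Omega,\pi(a)\eta\>$ is normal, i.e.\ $\sigma$-weakly continuous, since $\pi$ is normal. On bounded sets the $\sigma$-weak and weak operator topologies agree.

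I would therefore first reduce to bounded nets. Convergence in the paper's applications, e.g.\ monotone or uniformly bounded nets, is of this type. If the net is unbounded, I would instead interpret the hypothesis as $\sigma$-weak convergence, or note that for the identity representation the WOT statement is immediate. Once this is settled, $\lim_\alpha\<\Omega,\pi(a_\alpha)\eta\>=\<\Omega,\pi(a_\infty)\eta\>$, and reversing the identities above gives $\<\!\<\tilde a,a_\infty\>\!\>_\omega$.
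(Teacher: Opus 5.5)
Your computation is the paper's. The paper also uses that $J$ is a self-adjoint conjugate-linear unitary to rewrite $\<\!\<\tilde a,a\>\!\>_{\omega}=\<\pi(a^*)\,\Omega,J\pi(\tilde a)\,\Omega\>$, so that $a$ enters only through a fixed matrix element. It then takes the limit in $\alpha$, justified solely by the remark that ``$\pi$ is normal and hence weakly continuous.''

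The obstacle you flag is exactly where that remark fails, and it is a defect of the statement rather than of your argument. A normal representation is $\sigma$-weakly continuous, hence weak-operator continuous on bounded sets, but it is not weak-operator continuous on all of $\Alg{A}$. For unbounded nets the lemma is false. Here is a concrete counterexample:
\begin{itemize}
\item Let $\Alg{A}=\mathcal{B}(\mathcal{K})$ act on an infinite-dimensional $\mathcal{K}$, let $\omega=\Tr[\rho\,\cdot\,]$ with $\rho$ a faithful density operator, and take $\tilde a=1_{\Alg{A}}$. Then $\<\!\<1_{\Alg{A}},a\>\!\>_{\omega}=\omega(a)$ by~\eqref{eqn:CPIPredtostate}.
\item Direct the finite-dimensional subspaces $V\subseteq\mathcal{K}$ by inclusion, and for each $V$ choose a unit vector $\eta_V\perp V$.
\item Put $a_V:=\<\eta_V,\rho\,\eta_V\>^{-1}|\eta_V\>\<\eta_V|$. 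This is well defined since $\<\eta_V,\rho\,\eta_V\>>0$ by faithfulness of $\omega$.
\item Then $\<x,a_Vy\>=0$ as soon as $x,y\in V$, so $a_V\to 0$ in the weak operator topology.
\item Yet $\<\!\<1_{\Alg{A}},a_V\>\!\>_{\omega}=1$ for every $V$, while $\<\!\<1_{\Alg{A}},0\>\!\>_{\omega}=0$.
\end{itemize}

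So your argument establishes the correct version of the lemma, and the paper's proof is valid only under such a restriction. Concretely, your argument covers:
\begin{itemize}
\item bounded weakly convergent nets, where weak and $\sigma$-weak convergence coincide;
\item $\sigma$-weakly convergent nets;
\item sequences, which are automatically bounded by the uniform boundedness principle;
\item arbitrary nets when the weak topology is taken in a standard-form representation with $\omega$ a vector state. This is your Option (a), where the functional $a\mapsto\<\Omega,aJ\tilde a\,\Omega\>$ is literally a vector functional.
\end{itemize}

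The one thing to fix is the wording ``reduce to bounded nets.'' An unbounded weakly convergent net cannot be replaced by a bounded one, so boundedness (or $\sigma$-weak convergence) must be added as a hypothesis.

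This costs nothing where the lemma is used, in Proposition~\ref{prop:PetzTensorial}. The algebraic tensor products there are unital $*$-algebras. By the Kaplansky density theorem, every element of $\Alg{B}_1\overline{\otimes}\Alg{B}_2$ (resp.\ $\Alg{A}_1\overline{\otimes}\Alg{A}_2$) is a weak limit of a bounded net from the algebraic tensor product. The normal maps applied there send bounded weakly convergent nets to bounded weakly convergent nets, so the bounded version of the lemma suffices at every step.
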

    \begin{proof}
    Let $\pi:\Alg{A}\to\mathcal{B}(\mathcal{H})$ be an injective normal representation of $\Alg{A}$ on a Hilbert space $\mathcal{H}$ and let $\Omega\in\mathcal{H}$ be a cyclic and separating vector for $\omega$. Set $x=\pi\circ a$ so that $x_\alpha=\pi(a_{\alpha})$ and $x_{\infty}=\pi(a_{\infty})$. Since $\lim_{\alpha\in A} a_\alpha=a_{\infty}$ in the weak operator topology, we have $\lim_{\alpha\in A} a_\alpha^{\ast}=a_{\infty}^{\ast}$ in the weak operator topology as well. Therefore, 
        \begin{align*}
           \<\!\<\tilde{a}, a_{\infty}\>\!\>_{\omega}
           &=\langle \tilde{x} \,\Omega, J x_{\infty}^{\ast} \,\Omega \rangle & & \mbox{ by~\eqref{eqn:ConnesSelfPolarForm}} \\
            &=\langle x_{\infty}^{\ast} \, \Omega, J \tilde{x} \,\Omega \rangle  & & \mbox{ since $J$ is a self-adjoint conjugate-linear unitary} \\
            &=\lim_{\alpha\in A}  \langle x_{\alpha}^{\ast} \, \Omega, J \tilde{x} \,\Omega \rangle & & \mbox{ since $\pi$ is normal and hence weakly continuous} \\
            &= \lim_{\alpha\in A} \langle \tilde{x} \, \Omega, J x_{\alpha}^{\ast} \,\Omega \rangle & & \mbox{ since $J$ is a self-adjoint conjugate-linear unitary}\\
            &=\lim_{\alpha\in A} \<\!\<\tilde{a}, a_{\alpha}\>\!\>_{\omega}  & & \mbox{ by~\eqref{eqn:ConnesSelfPolarForm}}
        \end{align*}
        for all $\tilde{a}\in\Alg{A}$. 
    \end{proof}

\blem
\label{lem:PetzIPtensor}
Let $(\Alg{A}_{i},\omega_{i})$, with $i\in\{1,2\}$, denote two von~Neumann algebras equipped with faithful states. Then 
\[
\<\!\<a_1\otimes a_2,\tilde{a}_1\otimes \tilde{a}_{2}\>\!\>_{\omega_1\otimes\omega_2}=\<\!\<a_1,\tilde{a}_{1}\>\!\>_{\omega_1}\<\!\<a_2,\tilde{a}_{2}\>\!\>_{\omega_2}
\]
for all $a_1,\tilde{a}_{1}\in\Alg{A}_{1}$ and $a_2,\tilde{a}_{2}\in\Alg{A}_{2}$. 
\elem

\bprf
Let $(\mathcal{H}_{i},\pi_{i},\Omega_{i})$ denote GNS representations for $(\Alg{A}_{i},\omega_{i})$ with $i\in\{1,2\}$. 
If we set $x_{i}:=\pi_{i}(a_{i})$ and $\tilde{x}_{i}=\pi_{i}(\tilde{a}_i)$ for $i=1,2$, then 
\begin{align}
\<\!\<a_1\otimes a_2,\tilde{a}_1\otimes \tilde{a}_{2}\>\!\>_{\omega_1\otimes\omega_2}
&=\big\<(x_1\otimes x_2)(\Omega_1\otimes\Omega_2),(J_1\otimes J_2)(\tilde{x}_1\otimes \tilde{x}_2)^*(\Omega_1\otimes\Omega_2)\big\> \nonumber \\
&=\big\< x_1\Omega_1\otimes x_2\Omega_2,J_1 \tilde{x}_1^*\Omega_1\otimes J_2\tilde{x}_{2}^*\Omega_2\big\> \nonumber\\
&=\<x_1\Omega_1,J_1\tilde{x}_{1}^*\Omega_1\> \<x_2\Omega_2,J_2\tilde{x}_{2}^*\Omega_2\> \nonumber \\
&=\<\!\<a_1,\tilde{a}_{1}\>\!\>_{\omega_1}\<\!\<a_2,\tilde{a}_{2}\>\!\>_{\omega_2} \nonumber
\end{align}
for all $a_1,\tilde{a}_{1}\in\Alg{A}_{1}$ and $a_2,\tilde{a}_{2}\in\Alg{A}_{2}$. In these calculations, we have used properties of tensor products reviewed at the end of Appendix~\ref{sec:reviewModularTheory}. 
\eprf

\section{Adjoints with respect to the KMS inner product}
\label{sec:PetzMap}

In this section, we use the 
KMS inner product to construct adjoints of NCPU maps. These adjoints define the Petz recovery maps of quantum operations. Because we will analyze the categorical properties of the Petz recovery map, we will begin by introducing the relevant category to view the Petz recovery map as a special kind of functor on this category. 

Let $\mathbf{Q}$ be the category defined as follows. An object of $\mathbf{Q}$ is a pair $(\Alg{A},\omega)$ consisting of a von~Neumann algebra $\Alg{A}$ together with a faithful normal state $\omega:\Alg{A}\to\C$. A morphism from $(\Alg{B},\xi)$ to $(\Alg{A},\omega)$ is a NCPU map $\mathcal{E}:\Alg{B}\to\Alg{A}$ such that $\xi=\omega\circ\mathcal{E}$. The identity morphism on $(\Alg{A},\omega)$ is the identity function. Composition of morphisms is ordinary composition of functions. As mentioned in Appendix~\ref{sec:reviewModularTheory},  
the composite of two NCPU maps remains NCPU so that $\mathbf{Q}$ indeed defines a category. 
A morphism of the form $(\Alg{B},\xi)\xrightarrow{\mathcal{E}}(\Alg{A},\omega)$ is written in this way, namely from $\Alg{B}$ to $\Alg{A}$, to make the connection with quantum information theory more direct. The algebra $\Alg{A}$ represents the algebra of observables accessible to Alice, while the algebra $\Alg{B}$ for Bob. Alice and Bob are two fictitious characters used to provide an anthropomorphic description for the algebras of observables of a quantum system evolving in one discrete time step. The evolution is written from Bob to Alice since this is describing the Heisenberg picture for how observables evolve in time (as opposed to the Schr\"odinger picture, which describes how states evolve in time). More on this will be described later in Example~\ref{ex:Petzmapfinitedimensional}.  

\bt
\label{thm:PetzRecovery}
Let $(\Alg{B},\xi)\xrightarrow{\mathcal{E}}(\Alg{A},\omega)$ be a morphism in $\mathbf{Q}$. Then there exists a unique linear map $\mathcal{E}^{\star}:\Alg{A}\to\Alg{B}$ such that 
\[
\<\!\<\mathcal{E}(b),a\>\!\>_{\omega}=\<\!\<b,\mathcal{E}^{\star}(a)\>\!\>_{\xi}
\]
for all $a\in\Alg{A}$ and $b\in\Alg{B}$. Moreover, $\mathcal{E}^{\star}$ defines a morphism $(\Alg{B},\xi)\xleftarrow{\mathcal{E}^{\star}}(\Alg{A},\omega)$ in $\mathbf{Q}$. 
\et

The proof of this theorem will follow a combination of the proof of~\cite[Proposition~3.1]{AcCe82} and the content in~\cite[Section 3]{Pe84}. However, we will prove each step in thorough detail being sure to keep the proof as self-contained as possible. 

\bprf[Proof of Theorem~\ref{thm:PetzRecovery}]
This proof will be split into the following sequence of steps. First, we will construct $\mathcal{E}^{\star}$. 
Second, we will prove that it satisfies the adjointness condition in the statement of the theorem. 
Third, we will prove that $\mathcal{E}^{\star}\circ\xi=\omega$. 
Fourth, and finally, we will prove that $\mathcal{E}^{\star}$ is NCPU. 
To set the notation, let $\pi:\Alg{A}\to\mathcal{B}(\mathcal{H})$ be a representation of $\Alg{A}$ and let $\Omega$ be a cyclic and separating vector for $\Alg{M}:=\pi(\Alg{A})$ that also represents $\omega$ as a vector state. Similarly, let $\tau:\Alg{B}\to\mathcal{B}(\mathcal{K})$ be a representation of $\Alg{B}$ and let $\Xi$ be a cyclic and separating vector for $\mathcal{N}:=\tau(\Alg{B})$ that also represents $\xi$ as a vector state. As in the previous section, we may occasionally not explicitly write the representation maps $\pi$ and $\tau$ for brevity in some calculations by using notation such as $x=\pi(a)\in\Alg{M}$ and $y=\tau(b)\in\Alg{N}$. 

Following the method of proof in \cite[Proposition~3.1]{AcCe82}), for each $x'\in\Alg{M}'\subseteq\mathcal{B}(\mathcal{H})$ such that $x'\ge0$, define 
$\chi_{x'}:\Alg{N}\to\C$ by 
\[
\Alg{N}\ni y \mapsto
\chi_{x'}(y):=\<x'\,\Omega,\mathcal{E}(y)\,\Omega\>, 
\]
where $\mathcal{E}(y):=\pi(\mathcal{E}(b))\in\Alg{M}$, with $b\in\Alg{B}$ the unique element such that $y=\tau(b)$. 
Then for each $y\in\Alg{N}$ with $y\ge0$, we have 
\[
\chi_{x'}(y)=\left\<\sqrt{x'}\sqrt{\mathcal{E}(y)}\Omega,\sqrt{x'}\sqrt{\mathcal{E}(y)}\Omega\right\>
\]
since $x'\ge0$, $y\ge0$, $\mathcal{E}(y)\ge0$, and $[x',\mathcal{E}(y)]=0$. This proves that $\chi_{x'}$ is a positive map. Hence, 
\[
\chi_{x'}(y)=\left\lVert\sqrt{x'}\sqrt{\mathcal{E}(y)}\Omega\right\rVert^2
\le \big\lVert\sqrt{x'}\big\rVert^2 \big\lVert\sqrt{\mathcal{E}(y)}\Omega\big\rVert^2
=\lVert x'\rVert \<\Omega,\mathcal{E}(y)\Omega\>
=\lVert x'\rVert \omega\big(\mathcal{E}(y)\big)
=\lVert x'\rVert \xi(y)
\]
by the properties of bounded operators and their norms as well as the state-preserving condition $\xi=\omega\circ\mathcal{E}$. 
This calculation proves that $\chi_{x'}\le_{u}\xi$ for all $x'\in\Alg{M}'\subseteq\mathcal{B}(\mathcal{H})$ with $x'\ge0$. Hence, there exists an element, namely the Radon--Nikodym derivative $\mathcal{E}'(x'):=\frac{d\chi_{x'}}{d\xi}\in\mathcal{N}'$, such that $\mathcal{E}'(x')\ge0$ and 
\begin{align}\label{AccardiCech}
    \big\langle x' \,\Omega , \E(y)\, \Omega \big\rangle
    =\chi_{x'}(y)
    =\big\langle \Xi,\mathcal{E}'(x') y\,\Xi\big\rangle
    = \big\langle \E'(x')\, \Xi , y\, \Xi\big\rangle.
\end{align}
The linearity property of the Radon--Nikodym derivative, reviewed in Appendix~\ref{sec:reviewModularTheory}, guarantees that $\mathcal{E}'$ satisfies 
\[
\mathcal{E}'(x_{1}'+c x_{2}')=\mathcal{E}'(x_{1}')+c\mathcal{E}'(x_2')
\]
for all $x_{1}',x_{2}'\in\Alg{M}'\subseteq\mathcal{B}(\mathcal{H})$ with $x_{1}',x_{2}'\ge0$ and $c\ge0$. Since every general element $x'\in\Alg{M}'\subseteq\mathcal{B}(\mathcal{H})$ can be written as a unique complex combination of positive elements in $\Alg{M}'$ as 
\[
x'=x_{\Re}^{\prime \; +}-x_{\Re}^{\prime \; -}+ix_{\Im}^{\prime \; +}-ix_{\Im}^{\prime \; -}
\]
via~\eqref{eqn:positivedecomp}, $\mathcal{E}'$ uniquely extends to a linear map by defining 
\[
\mathcal{E}'(x'):=\mathcal{E}'\big(x_{\Re}^{\prime \; +}\big) -\mathcal{E}'\big(x_{\Re}^{\prime \; -}\big)+i\mathcal{E}'\big(x_{\Im}^{\prime \; +}\big)-i\mathcal{E}'\big(x_{\Im}^{\prime \; -}\big).
\] 
Thus, $\mathcal{E}':\Alg{M}'\to\Alg{N}'$ is a positive linear map by construction. 
Following Ref.~\cite{AcCe82}, we prove normality and complete positivity of $\mathcal{E}'$. For normality, let $x':A\to\Alg{M}'$ be an increasing net of positive elements in $\Alg{M}'$, with each element in the net being written as $x'_{\alpha}$ with $\alpha\in A$, such that $x'_{\infty}:=\sup_{\alpha}x_{\alpha}\in\Alg{M}'$ exists. Then 
\[
\big\<\mathcal{E}'(x'_{\infty})\,\Xi,y\,\Xi\big\>
=\big\<x'_{\infty}\,\Omega,\mathcal{E}(y)\,\Omega\big\>
=\sup_{\alpha}\big\<x'_{\alpha}\,\Omega,\mathcal{E}(y)\,\Omega\big\>
=\sup_{\alpha}\big\<\mathcal{E}'(x'_{\alpha})\,\Xi,y\,\Xi\big\>
=\left\<\sup_{\alpha}\mathcal{E}'(x'_{\alpha})\,\Xi,y\,\Xi\right\>
\]
for all $y\in \Alg{N}$. By the cyclic and separating condition on $\Xi$, this proves that $\mathcal{E}'(x'_{\infty})=\sup_{\alpha}\mathcal{E}'(x'_{\alpha})$, therefore showing that $\mathcal{E}'$ is normal (it is interesting to note that this is true whether or not $\mathcal{E}$ is normal). Secondly, $\mathcal{E}'$ is also CP since for every $n\in \N$, 
\begin{align*}
\sum_{j,k=1}^{n} \left\< y_{j}\,\Xi, \mathcal{E}'(x_{j}^{\prime \, *} x'_{k})  y_{k} \, \Xi \right\>
&=
\sum_{j,k=1}^{n} \overline{\left\< \mathcal{E}'(x_{j}^{\prime \, *} x'_{k}) y_{k}\,\Xi,y_{j} \, \Xi \right\>} && \mbox{ by the conjugate-symmetry of $\<\,\cdot\,,\,\cdot\,\>$} \\
&=\sum_{j,k=1}^{n} \overline{\left\< \mathcal{E}'(x_{j}^{\prime \, *} x'_{k}) \,\Xi,y_{k}^{*} y_{j} \, \Xi \right\>} && \mbox{ because $y_{k}$ commutes with $\mathcal{E}'(x_{j}^{\prime \, *} x'_{k})$ } \\
&=\sum_{j,k=1}^{n} \overline{\left\< x_{j}^{\prime \, *} x'_{k} \,\Omega, \mathcal{E}( y_{k}^{*} y_{j}) \, \Omega \right\>} && \mbox{ by~\eqref{AccardiCech}, the definition of $\mathcal{E}'$ } \\
&=\sum_{j,k=1}^{n} \overline{\left\< x'_{k} \,\Omega, \mathcal{E}( y_{k}^{*} y_{j}) x_{j}' \, \Omega \right\>} && \mbox{ since $x_{j}'$ commutes with $\mathcal{E}( y_{k}^{*} y_{j})$ }\\
&\ge0 &&\mbox{ since $\mathcal{E}$ is CP}
\end{align*}
for all $x'_{1},\dots,x'_{n}\in\Alg{M}'$ and for all $y_{1},\dots,y_{n}\in\Alg{N}$. Since $\Xi$ is cyclic and separating, this shows positivity of $(\id_{\matr_{n}}\otimes\mathcal{E}')(x^{\prime\,*}x')$ on a dense subspace of $\C^{n}\otimes\mathcal{H}\cong\mathcal{H}^{\oplus n}$. 
Hence, there exists a unique NCPU map $\E' \colon \Alg{M}' \to \Alg{N}'$ satisfying~\eqref{AccardiCech}
for all $x'\in\Alg{M}'$ and $y\in\Alg{N}$. 

Next, define $\mathcal{E}^{\star}:\Alg{M}\to\Alg{N}$ by 
\be
\label{eqn:Estardefn}
\Alg{M}\ni x\mapsto 
\mathcal{E}^{\star}(x):=J_{\Xi} \E'(J_{\Omega} x J_{\Omega}) J_{\Xi}. 
\ee
We also abuse notation and define $\mathcal{E}^{\star}:\Alg{A}\to\Alg{B}$ sending $a\in\Alg{A}$ to the unique element $\mathcal{E}^{\star}(a)\in\Alg{B}$ satisfying the condition that $\tau(\mathcal{E}^{\star}(a))=\mathcal{E}^{\star}(\pi(a))$.
Note that $\mathcal{E}^{\star}$ is NCPU if and only if $\mathcal{E}$ is NCPU since the two are related by adjoint actions by conjugate-linear unitary operators. 
Due to all of the different maps appearing in this definition, we find it convenient to write them all down in the following diagram
\[
\xy0;/r.25pc/:
(-20,-12.5)*+{\Alg{B}\cong\Alg{N}}="N";
(-7.5,0)*+{\mathcal{B}(\mathcal{K})}="BK";
(-20,12.5)*+{\Alg{N}'}="Np";
(20,-12.5)*+{\Alg{M}\cong\Alg{A}}="M";
(7.5,0)*+{\mathcal{B}(\mathcal{H})}="BH";
(20,12.5)*+{\Alg{M}'}="Mp";
{\ar@{^{(}->}"N";"BK"};
{\ar@{^{(}->}"Np";"BK"};
{\ar@/_0.75pc/"N";"M"_{\mathcal{E}}};
{\ar@/_0.75pc/"M";"N"_{\mathcal{E}^{\star}}};
{\ar@{^{(}->}"M";"BH"};
{\ar@{^{(}->}"Mp";"BH"};
{\ar"Mp";"Np"_{\mathcal{E}'}};
{\ar@{<->}"M";"Mp"_{\Ad_{J_{\Omega}}}};
{\ar@{<->}"N";"Np"^{\Ad_{J_{\Xi}}}};
\endxy
\]
in order to keep track of all their domains. 

We next prove that $\<\!\<\mathcal{E}(b),a\>\!\>_{\omega}=\<\!\<b,\mathcal{E}^{\star}(a)\>\!\>_{\xi}$. On the one hand, we have 
\begin{align*}
\<\!\<\mathcal{E}(b),a\>\!\>_{\omega}&=\<\mathcal{E}(y)\,\Omega,J_{\Omega}x^*\,\Omega\> & & \text{by definition of $\<\!\<\;\cdot\;,\;\cdot\;\>\!\>_{\omega}$, $x=\pi(a)$, and $y=\tau(b)$} \\
&=\<\mathcal{E}(y)\,\Omega,J_{\Omega}x^*J_{\Omega}\,\Omega\> & & \text{since $J_{\Omega}\Omega=\Omega$}\\
&=\<\mathcal{E}(y)\,\Omega,x'\,\Omega\> & & \text{where $x':=J_{\Omega}x^*J_{\Omega}$, since $J_{\Omega}\Alg{M}J_{\Omega}=\Alg{M}'$}\\
&= \<y\,\Xi,\mathcal{E}'(x')\,\Xi\>  & & \text{by~\eqref{AccardiCech}.}
\end{align*}
On the other hand, 
\begin{align*}
\<\!\<b,\mathcal{E}^{\star}(a)\>\!\>_{\xi}&=\<y\,\Xi,J_{\Xi}\mathcal{E}^{\star}(x)^*\,\Xi\> & &  \text{by definition of $\<\!\<\;\cdot\;,\;\cdot\;\>\!\>_{\xi}$, $x$, and $y$}\\
&=\Big\<y\,\Xi,J_{\Xi}\big(J_{\Xi} \mathcal{E}'(J_{\Omega} xJ_{\Omega})J_{\Xi}\big) ^*\,\Xi\Big\> & &  \text{by definition of $\mathcal{E}^{\star}$ from~\eqref{eqn:Estardefn}}\\
&=\big\<y\,\Xi, J_{\Xi} J_{\Xi}^{*}\mathcal{E}'(J_{\Omega}^* x^*J_{\Omega}^*) J_{\Xi}^{*}\,\Xi\big\> & &  \text{since $\mathcal{E}'$ is $*$-preserving}\\
&=\big\<y\,\Xi, \mathcal{E}'(J_{\Omega} x^*J_{\Omega})\,\Xi\big\> & &  \text{since $J^*=J$, $J^2=\id$, and $J_{\Xi}\Xi=\Xi$}\\
&=\big\<y\,\Xi, \mathcal{E}'(x')\,\Xi\big\>  & & \text{since $x':=J_{\Omega}x^*J_{\Omega}$}.
\end{align*}
This proves all the claims, including the fact that $\xi\circ\mathcal{E}^{\star}=\omega$ due to~\eqref{eqn:CPIPredtostate}. 
\eprf

\bd
\label{defn:PetzRecoveryMap}
The map $\mathcal{E}^{\star}$ in Theorem~\ref{thm:PetzRecovery} is called the \define{Petz recovery map} of the morphism $(\Alg{B},\xi)\xrightarrow{\mathcal{E}}(\Alg{A},\omega)$. When viewed as a morphism in $\mathbf{Q}$, the morphism $(\Alg{B},\xi)\xleftarrow{\mathcal{E}^{\star}}(\Alg{A},\omega)$ in $\mathbf{Q}$ is called the \define{Petz retrodiction} of $(\Alg{B},\xi)\xrightarrow{\mathcal{E}}(\Alg{A},\omega)$. 
\ed

\bx
\label{ex:Petzmapfinitedimensional}
We now examine Theorem~\ref{thm:PetzRecovery} in the case that $\Alg{A}=\matr_{m}$, $\Alg{M}=\Alg{A}\otimes\mathds{1}_{m}$, $\Alg{B}=\matr_{n}$, and $\Alg{N}=\Alg{B}\otimes\mathds{1}_{n}$ by explicitly constructing $\mathcal{E}^{\star}$. We use the notation from Examples~\ref{ex:CPinnerpFDcase} and~\ref{ex:fdcaseintro}, but we also represent the density matrix for $\xi$ by $\sigma$ so that $\xi=\Tr[\sigma\;\cdot\;]$. We write an eigendecomposition of $\sigma$ as $\sigma=\sum_{i}q_{i}\mathbf{v}_{i}\mathbf{v}_{i}^{\dag}$ with $\{\mathbf{v}_{i}\}$ an orthonormal basis of $\mathcal{K}=\C^{n}$ and $\{q_{i}\}$ a probability distribution, which is nowhere vanishing due to the faithfulness condition on $\xi$. Due to the finite-dimensionality assumption, we may construct $\mathcal{E}^{\star}$ directly from the definition of the 
KMS inner product and our expression of it in terms of density matrices and traces from Example~\ref{ex:CPinnerpFDcase}. Namely, $\mathcal{E}^{\star}$ is the unique map satisfying 
\begin{equation}
\label{eqn:CPinnerprodforEandEstar}
\Tr\big[\rho^{1/2}\mathcal{E}(b)^{\dag}\rho^{1/2}a\big]=\Tr\big[\sigma^{1/2}b^{\dag}\sigma^{1/2}\mathcal{E}^{\star}(a)\big]
\end{equation}
for all $a\in\matr_{m}$ and $b\in\matr_{n}$. Rewriting the LHS of~\eqref{eqn:CPinnerprodforEandEstar} yields
\[
\Tr\big[\rho^{1/2}\mathcal{E}(b)^{\dag}\rho^{1/2}a\big]
=\Tr\big[\mathcal{E}(b)^{\dag}\rho^{1/2}a\rho^{1/2}\big]
=\Tr\big[b^{\dag}\mathcal{E}^*\big(\rho^{1/2}a\rho^{1/2}\big)\big],
\]
where $\mathcal{E}^*$ denotes the Hilbert--Schmidt adjoint of $\mathcal{E}$. Rewriting the RHS of~\eqref{eqn:CPinnerprodforEandEstar} yields 
\[
\Tr\big[\sigma^{1/2}b^{\dag}\sigma^{1/2}\mathcal{E}^{\star}(a)\big]
=\Tr\big[b^{\dag}\sigma^{1/2}\mathcal{E}^{\star}(a)\sigma^{1/2}\big].
\]
Using the equality in~\eqref{eqn:CPinnerprodforEandEstar} and the faithfulness of the trace then yields 
\[
\mathcal{E}^*\big(\rho^{1/2}a\rho^{1/2}\big)=\sigma^{1/2}\mathcal{E}^{\star}(a)\sigma^{1/2}.
\]
Isolating $\mathcal{E}^{*}$ then gives
\[
\mathcal{E}^{\star}(a)=\sigma^{-1/2}\mathcal{E}^*\big(\rho^{1/2}a\rho^{1/2}\big)\sigma^{-1/2}
\]
for all $a\in\matr_{m}$, i.e., 
\[
\mathcal{E}^{\star}=\Ad_{\sigma^{-1/2}}\circ\mathcal{E}^{*}\circ\Ad_{\rho^{1/2}}. 
\]
We note that this might not look exactly like the standard Petz recovery map in some of the quantum information literature due to the fact that we have been working in the Heisenberg picture in the sense that the channels $\mathcal{E}$ and $\mathcal{E}^{\star}$ are viewed as functions sending observables to observables (as opposed to the Schr\"odinger picture, which involves functions sending density matrices to density matrices). 
\ex

As another class of examples for which the Petz recovery map takes on a particularly simple form is for isomorphisms $(\Alg{B},\xi)\xrightarrow{\mathcal{E}}(\Alg{A},\omega)$ in $\mathbf{Q}$, which implies $\mathcal{E}$ is a 
$*$-isomorphism (we note that ``isomorphism'' here is intended in the sense of category theory~\cite{Ma98}). 
We first state a lemma that will be used for the proposition.

\blem
\label{lem:contractionforE}
Let $(\Alg{B},\xi)\xrightarrow{\mathcal{E}}(\Alg{A},\omega)$ in $\mathbf{Q}$ be a morphism in $\mathbf{Q}$. Let $(\mathcal{H},\pi,\Omega)$ and $(\mathcal{K},\tau,\Xi)$ be GNS representations associated with $(\Alg{A},\omega)$ and $(\Alg{B},\xi)$, respectively. Set $\Alg{M}:=\pi(\Alg{A})$ and $\Alg{N}:=\tau(\Alg{B})$. Define the operator $E:\Alg{N}\,\Xi\to\Alg{M}\,\Omega$ by 
\[
E(y\,\Xi):=\widetilde{\mathcal{E}}(y)\,\Omega
\] 
for all $y\in\Alg{N}$, where $\widetilde{\mathcal{E}}:\Alg{N}\to\Alg{M}$ is defined by $\widetilde{\mathcal{E}}(y):=\pi(\mathcal{E}(b))$, where $y=\tau(b)$ for a unique $b\in\Alg{B}$. 
Then $E$ is well-defined and satisfies the following properties: 
\begin{enumerate}[i.]
\item 
$E(\Xi)=\Omega$, 
\item 
$E$ defines a contraction, hence extends uniquely to a bounded operator from $\mathcal{K}$ to $\mathcal{H}$, and
\item
$E^*\Omega=\Xi$.
\end{enumerate}
\elem

Before proving this, we make a comment as to why we use $\widetilde{\mathcal{E}}$ in the formulas instead of $\mathcal{E}$. One should think of $\mathcal{E}$ and $\widetilde{\mathcal{E}}$ as being the same exact quantum channel, except that $\widetilde{\mathcal{E}}$ is defined between subalgebras of the bounded operators on the GNS representations. The usage of $\widetilde{\mathcal{E}}$ is to be precise about the domains and codomains. 

\bprf[Proof of Lemma~\ref{lem:contractionforE}]
Before proving the properties, note that $E$ is a densely defined operator from $\mathcal{K}$ to $\mathcal{H}$. It is well-defined because if $y\,\Xi=\tilde{y}\,\Xi$, then $y=\tilde{y}$ by the separating condition for $\Xi$. It is densely defined since $\Xi$ is cyclic. 
\begin{enumerate}[i.]
\item First, $E(\Xi)=\Omega$ since $\mathcal{E}(1_{\Alg{N}})=1_{\Alg{M}}$. 
\item Second, $E$ is a contraction because
\begin{align*}
\big\lVert E(y\,\Xi)\big\rVert^2
&=\big\<\widetilde{\mathcal{E}}(y)\,\Omega,\widetilde{\mathcal{E}}(y)\,\Omega\big\> && \text{by definition of $E$} \\
&=\big\<\Omega,\widetilde{\mathcal{E}}(y)^*\widetilde{\mathcal{E}}(y)\,\Omega\big\> && \text{since $\widetilde{\mathcal{E}}(y)$ is bounded} \\ 
&=\omega\big(\mathcal{E}(b)^*\mathcal{E}(b)\big) & & \text{where $b\in\Alg{B}$ is the unique element such that $y=\tau(b)$ }\\
&\le\omega\big(\mathcal{E}(b^*b)\big) & & \text{by the Kadison--Schwarz inequality for $\mathcal{E}$} \\
&=\xi(b^* b) && \text{by the state-preserving condition on $\mathcal{E}$} \\
&=\lVert y\,\Xi\rVert^2  && \text{by definition of $\Xi$} 
\end{align*}
for all $y\in\Alg{N}$, which proves that $\lVert E\rVert\le 1$, so that $E$ is a contraction and hence bounded on its domain. Since the domain of $E$ is dense by the cyclic property of $\Xi$, the operator $E$ extends uniquely to a bounded operator from $\mathcal{K}$ to $\mathcal{H}$. In what follows, we also denote this bounded operator by $E$.

\item
Setting $E^*$ to be the adjoint of $E$, we have 
\[
\<\Xi,y\,\Xi\>
=\xi(b)
=\omega\big(\mathcal{E}(b)\big)
=\big\<\Omega,\widetilde{\mathcal{E}}(y)\,\Omega\big\>
=\big\<\Omega,E y\,\Xi\big\>
=\big\<E^*\Omega,y\,\Xi\big\>
\]
holds for all $y=\tau(b)\in\Alg{N}$. By the cyclic condition on $\Xi$ (density of $\Alg{N}\,\Xi$), this implies $E^*\Omega=\Xi$. 
\end{enumerate}
This proves all three claims. 
\eprf

\bn
\label{prop:PetzInverting}
Let $(\Alg{B},\xi)\xrightarrow{\mathcal{E}}(\Alg{A},\omega)$ in $\mathbf{Q}$ be an isomorphism in $\mathbf{Q}$. Let $E$ be defined as in Lemma~\ref{lem:contractionforE}. Then, $\mathcal{E}$ is a $*$-isomorphism and $\mathcal{E}^{\star}=\mathcal{E}^{-1}$. 
\en

Although this is proved in~\cite[Corollary 10]{Pe84}, we provide an alternative proof that does not rely on the modular automorphism group, but instead only uses modular operators. 

\bprf
Suppose that $\mathcal{E}$ is an isomorphism, i.e., $\mathcal{E}^{-1}$ exists and is a morphism $(\Alg{B},\xi)\xleftarrow{\mathcal{E}^{-1}}(\Alg{A},\omega)$ in $\mathbf{Q}$. Since $\mathcal{E}$ and $\mathcal{E}^{-1}$ are CPU, they satisfy the Kadison--Schwarz inequality, namely
\be
\label{KS}
    \E(b)^{\ast}\E(b) \leq \E(b^{\ast}b) \qquad\text{ for all }\; b \in \Alg{B}\,
\ee
and 
\be
\label{KSinv}
    \E^{-1}(a)^{\ast}\E^{-1}(a) \leq \E^{-1}(a^{\ast} a) \qquad\text{ for all }\;  a \in \Alg{A}. 
\ee
Now, given $b\in\Alg{B}$, set $a=\E(b)$ and substitute $a$ in the second inequality~\eqref{KSinv} to get
\be
\label{intineq}
    b^{\ast}b \leq \E^{-1}(\E(b)^{\ast}\E(b)).
\ee
Apply $\E$ on both sides of the inequality~\eqref{intineq} and use the fact that $\E$ is positive to obtain
\be
\label{revKad}
    \E(b^{\ast}b) \leq \E(b)^{\ast}\E(b).
\ee
Combining the inequality~\eqref{KS} and the inequality~\eqref{revKad} yields
\[
\E(b^{\ast}b)=\E(b)^{\ast}\E(b) \qquad\text{ for all }\; b \in \Alg{B}\,.
\]
By Choi's multiplicative domain theorem~\cite[Theorem 3.1]{Choi1974}, it follows that $\mathcal{E}$ is a unital $*$-homomorphism. For similar reasons, $\mathcal{E}^{-1}$ is a unital $*$-homomorphism. Thus, $\mathcal{E}$ is a $*$-isomorphism. 

Using the notation from Lemma~\ref{lem:contractionforE}, we next prove that $E$ is a unitary operator. To see that $E$ is an isometry, note that
\begin{align*}
\< E b_{1}\,\Xi, E b_{2}\,\Xi\>
&=\big\<\mathcal{E}(b_1)\,\Omega,\mathcal{E}(b_2)\,\Omega\big\>\\
&=\big\<\Omega,\mathcal{E}(b_1)^*\mathcal{E}(b_2)\,\Omega\big\>\\
&=\big\<\Omega,\mathcal{E}(b_1^*b_2)\,\Omega\big\>\\
&=\<\Xi,b_1^*b_2\,\Xi\>\\
&=\<b_1 \,\Xi,b_2\,\Xi\>
\end{align*}
for all $a\in\Alg{A}$ and $b\in\Alg{B}$, where we have abused notation by the identifications $\Alg{M}\cong\Alg{A}$ and $\Alg{N}\cong\Alg{B}$. Similarly, let $F:\mathcal{H}\to\mathcal{K}$ be the isometry characterized by $F(a\,\Omega)=\mathcal{E}^{-1}(a)\,\Xi$ for all $a\in\Alg{A}$. Since 
\[
E F a\,\Omega
=E \mathcal{E}^{-1}(a) \,\Xi
=\mathcal{E}\big(\mathcal{E}^{-1}(a)\big)\,\Omega
=a\,\Omega
\]
for all $a\in\Alg{A}$, continuity implies that $F$ is a right inverse of $E$ and $E$ is surjective onto a dense domain. Because $EE^*$ is a projection onto the range of $E$, which is all of $\mathcal{H}=\overline{\Alg{M}\Omega}$ ($E^*$ has no kernel because $E$ is injective and because $E^*E=\id_{\mathcal{K}}$), this implies $EE^*=\id_{\mathcal{H}}$. Thus, $E$ is a unitary operator and moreover $F=E^*$. 

Next, we prove that $E$ satisfies $J_{\Omega}E=EJ_{\Xi}$. We do this by first by showing $S_{\Omega}E=ES_{\Xi}$. This latter claim follows from 
\[
S_{\Omega} E b \,\Xi
=S_{\Omega}\mathcal{E}(b)\,\Omega
=\mathcal{E}(b^*)\,\Omega
=E b^*\, \Xi
=E S_{\Xi} b \,\Xi,
\]
which holds for all $b\in\Alg{B}$, therefore showing that there is equality on their common domains. But the domains of $S_{\Omega}E$ and $ES_{\Xi}$ are also equal because $\mathcal{D}_{S_{\Omega}E}=\Alg{N}\Xi=\mathcal{D}_{S_{\Xi}}=\mathcal{D}_{E S_{\Xi}}$. Thus, $S_{\Omega}E=ES_{\Xi}$. Hence, 
\[
S_{\Omega}
=ES_{\Xi}E^*
=(EJ_{\Xi} E^*)(E \Delta_{\Xi}^{1/2} E^*)
\]
gives a polar decomposition of $S_{\Omega}$ where the conjugate-linear unitary $W:=EJ_{\Xi} E^*$ satisfies $W^*W=\id_{\mathcal{K}}$, showing that $W^* W$ projects onto $\ker(S_{\Omega})^{\perp}=\{0\}^{\perp}=\mathcal{H}$. Therefore, by the uniqueness of polar decompositions, $E J_{\Xi} E^* = J_{\Omega}$, equivalently 
$J_{\Xi} = E^* J_{\Omega}E$.

Finally, we prove that $\mathcal{E}^{\star}=\mathcal{E}^{-1}$. This follows from 
\begin{align*}
\big\<\!\!\big\<\mathcal{E}(b), a\big\>\!\!\big\>_{\omega}
&=\big\<\mathcal{E}(b)\,\Omega,J_{\Omega} a^* \,\Omega\big\> &&\mbox{ by definition of $\<\!\<\,\cdot\,,\,\cdot\,\>\!\>_{\omega}$ }\\
&=\big\<\mathcal{E}(b)\,\Omega,J_{\Omega} \mathcal{E}\big(\mathcal{E}^{-1}(a^*)\big) \,\Omega\big\> && \mbox{ since $\mathcal{E}$ is a $*$-isomorphism} \\
&=\big\<Eb\,\Xi,J_{\Omega} E \mathcal{E}^{-1}(a^*)\,\Xi\big\> && \mbox{ by definition of $E$} \\
&=\big\<b\,\Xi,E^{*} J_{\Omega} E \mathcal{E}^{-1}(a^*)\Xi\big\> && \mbox{ by definition of $E^*$} \\
&=\big\<b\,\Xi,J_{\Xi} \mathcal{E}^{-1}(a^*)\,\Xi\big\> && \mbox{ since $J_{\Xi} = E^* J_{\Omega}E$ } \\
&=\big\<\!\!\big\<b,\mathcal{E}^{-1}(a)\big\>\!\!\big\>_{\xi} &&\mbox{ by definition of $\<\!\<\,\cdot\,,\,\cdot\,\>\!\>_{\xi}$ },
\end{align*}
which holds for all $a\in\Alg{A}$ and $b\in\Alg{B}$. By the uniqueness of the Petz recovery map $\mathcal{E}^{\star}$ from Theorem~\ref{thm:PetzRecovery}, this proves $\mathcal{E}^{\star}=\mathcal{E}^{-1}$. 
\eprf

Proposition~\ref{prop:PetzInverting} allows us to interpret the Petz recovery map as a generalization of the concept of an inverse. Namely, if we let $\mathbf{I}$ denote the subcategory of $\mathbf{Q}$ whose morphisms are isomorphisms (in the categorical sense~\cite{Ma98}), then the Petz recovery map of any morphism in $\mathbf{I}$ is the inverse. This will be explained in more depth in the next section, where we show that the Petz recovery map is a noncommutative extension of Bayesian inversion.

\section{The Petz retrodiction functor}
\label{sec:retrodictionfunctor}

First, we specialize the definition of retrodiction functors from Ref.~\cite{Pa24} to the category $\mathbf{Q}$ of normal faithful states on von~Neumann algebras and state-preserving NCPU maps. We will not present the general categorical definition from Ref.~\cite{Pa24} here, as it is not needed. The remainder of this section proves that Petz retrodiction defines a retrodiction functor on $\mathbf{Q}$. Here, we use the notation $\mathbf{Q}^{\op}$ to denote the \define{opposite category} of $\mathbf{Q}$, which has the same objects of $\mathbf{Q}$, but a morphism from $(\Alg{A},\omega)$ to $(\Alg{B},\xi)$ in $\mathbf{Q}^{\op}$ is a morphism from $(\Alg{B},\xi)$ to $(\Alg{A},\omega)$ in $\mathbf{Q}$~\cite{Ma98}. As such, we will often describe morphisms in $\mathbf{Q}^{\op}$ as morphisms in $\mathbf{Q}$, but with arrows reversed. We also note that $\mathbf{Q}$ is a symmetric monoidal category. This follows from taking the von~Neumann algebra tensor product when defining the monoidal structure on objects, as reviewed in Appendix~\ref{sec:reviewModularTheory} (the tensor product of two normal faithful states is also a normal faithful state). As for morphisms, the tensor product of two state-preserving NCPU maps is also a state-preserving NCPU map. The usual associators and braiding natural isomorphisms turn $\mathbf{Q}$ into a symmetric monoidal category (see Refs.~\cites{Ma98,Yanofsky2024Monoidal} for the definition of a symmetric monoidal category). 

\bd
\label{defn:retrodiction}
A \define{retrodiction functor} on $\mathbf{Q}$ is an assignment $\mathscr{R}:\mathbf{Q}\to\mathbf{Q}^{\op}$ satisfying the following properties: 
\begin{enumerate}
\item (\define{recovery property}) $\mathscr{R}(\Alg{A},\omega)=(\Alg{A},\omega)$ for all objects $(\Alg{A},\omega)$ in $\mathbf{Q}$ and each morphism $(\Alg{B},\xi)\xrightarrow{\mathcal{E}}(\Alg{A},\omega)$ in $\mathbf{Q}$ gets sent to a morphism $(\Alg{B},\xi)\xleftarrow{\mathscr{R}(\mathcal{E})}(\Alg{A},\omega)$ in $\mathbf{Q}$
\item (\define{identity-preservation}) $\mathscr{R}\left(\id_{(\Alg{A},\omega)}\right)=\id_{(\Alg{A},\omega)}$ for all objects $(\Alg{A},\omega)$ in $\mathbf{Q}$ 
\item (\define{compositionality}) $\mathscr{R}(\mathcal{F})\circ \mathscr{R}(\mathcal{E})=\mathscr{R}(\mathcal{E}\circ\mathcal{F})$ for every pair $(\Alg{C},\zeta)\xrightarrow{\mathcal{F}}(\Alg{B},\xi)\xrightarrow{\mathcal{E}}(\Alg{A},\omega)$ of composable morphisms in $\mathbf{Q}$
\item (\define{tensoriality}) $\mathscr{R}(\mathcal{E}_{1}\otimes\mathcal{E}_{2})=\mathscr{R}(\mathcal{E}_{1})\otimes\mathscr{R}(\mathcal{E}_{2})$ for all pairs of morphisms $(\Alg{B}_1,\xi_1)\xrightarrow{\mathcal{E}_1}(\Alg{A}_1,\omega_1)$ and $(\Alg{B}_2,\xi_2)\xrightarrow{\mathcal{E}_2}(\Alg{A}_2,\omega_2)$ in $\mathbf{Q}$
\item (\define{extending inversion}) $\mathscr{R}$ is \define{inverting} in the sense that $\mathscr{R}(\mathcal{E})=\mathcal{E}^{-1}$ whenever $\mathcal{E}$ is an isomorphism in $\mathbf{Q}$ (i.e., a morphism in $\mathbf{I}$)
\item (\define{involutivity}) $\mathscr{R}$ is \define{involutive} in the sense that $\mathscr{R}\circ\mathscr{R}=\id_{\mathbf{Q}}$. 
\end{enumerate}
In other words, $\mathbf{Q}$ equipped with $\mathscr{R}$ is a 
unitary dagger symmetric monoidal category~\cites{Se07,Ka18,PaBu22} (\emph{unitary} here means that every isomorphism is unitary, i.e., the inverse is equal to its $\mathscr{R}$-adjoint, not that every morphism is implemented by a unitary element in the algebra).  
\ed

\br
Compositionality is sometimes called the \emph{chain rule} in the literature~\cite{Ts22b}. This is because the chain rule from differential geometry (multivariable calculus) sending smooth functions between pointed manifolds (open sets in Euclidean space) to the associated derivative linear transformations (Jacobian matrices) between tangent spaces is precisely describing a functor~\cites{Lee2013Manifolds,MilnorTopDiff1997,BaMu94}. 
\er

\bt
\label{thm:Petzretrodictionfunctor}
The assignment $\mathscr{R}:\mathbf{Q}\to\mathbf{Q}^{\op}$ sending a morphism $(\Alg{B},\xi)\xrightarrow{\mathcal{E}}(\Alg{A},\omega)$ to its Petz retrodiction map $(\Alg{B},\xi)\xleftarrow{\mathcal{E}^{\star}}(\Alg{A},\omega)$ defines a retrodiction functor, henceforth called the \define{Petz retrodiction functor}. 
\et

The proof of this will be split up into several propositions that isolate some of the properties of the Petz recovery map. 

\bn
\label{prop:PetzFunctorial}
The assignment $\mathscr{R}:\mathbf{Q}\to\mathbf{Q}^{\op}$ sending a morphism $(\Alg{B},\xi)\xrightarrow{\mathcal{E}}(\Alg{A},\omega)$ to its Petz retrodiction map $(\Alg{B},\xi)\xleftarrow{\mathcal{E}^{\star}}(\Alg{A},\omega)$ satisfies the recovery property and defines a functor, i.e., $\mathscr{R}$ is identity-preserving and satisfies compositionality. 
\en

\bprf
First note the fact that $\mathscr{R}$ lands in the category $\mathbf{Q}$ follows from the fact that $\mathcal{E}^{\star}$ is NCPU and state-preserving for all morphisms $(\Alg{B},\xi)\xrightarrow{\mathcal{E}}(\Alg{A},\omega)$ in $\mathbf{Q}$ by Theorem~\ref{thm:PetzRecovery}. The recovery property also follows from Theorem~\ref{thm:PetzRecovery}. To see that identity-preservation holds, let $(\Alg{A},\omega)$ be an object in $\mathbf{Q}$. By Theorem~\ref{thm:PetzRecovery}, there exists a linear map $\id_{(\Alg{A},\omega)}^{\star}:\Alg{A}\to\Alg{A}$ such that 
\[
\<\!\<a_2,a_1\>\!\>_{\omega}
=\<\!\<\id_{(\Alg{A},\omega)}(a_2),a_1\>\!\>_{\omega}
=\<\!\<a_2,\id_{(\Alg{A},\omega)}^{\star}(a_1)\>\!\>_{\omega}
\]
for all $a_1,a_2\in\Alg{A}$. 
By \emph{uniqueness} of $\id_{(\Alg{A},\omega)}^{\star}$, it follows that $\id_{(\Alg{A},\omega)}^{\star}=\id_{(\Alg{A},\omega)}$. As for the compositionality property, let $(\Alg{C},\zeta)\xrightarrow{\mathcal{F}}(\Alg{B},\xi)\xrightarrow{\mathcal{E}}(\Alg{A},\omega)$ be a composable pair of morphisms in $\mathbf{Q}$. Then, on the one hand, 
\[
\big\<\!\big\<(\mathcal{E}\circ\mathcal{F})(c),a\big\>\!\big\>_{\omega}
=\big\<\!\big\<c,(\mathcal{E}\circ\mathcal{F})^{\star}(a)\big\>\!\big\>_{\zeta}
\]
holds for all $a\in\Alg{A}$ and $c\in\Alg{C}$ by Theorem~\ref{thm:PetzRecovery} applied to $\mathcal{E}\circ\mathcal{F}$. On the other hand, 
\[
\Big\<\!\!\Big\<\mathcal{E}\big(\mathcal{F}(c)\big),a\Big\>\!\!\Big\>_{\omega}=\big\<\!\big\<\mathcal{F}(c),\mathcal{E}^{\star}(a)\big\>\!\big\>_{\xi}
=\Big\<\!\!\Big\<c,\mathcal{F}^{\star}\big(\mathcal{E}^{\star}(a)\big)\Big\>\!\!\Big\>_{\zeta}
\]
holds for all $a\in\Alg{A}$ and $c\in\Alg{C}$ by Theorem~\ref{thm:PetzRecovery} applied to $\mathcal{E}$ and $\mathcal{F}$, respectively. Since the left-hand-sides of these two expressions are equal by definition of composition, uniqueness from Theorem~\ref{thm:PetzRecovery} implies that $\mathcal{F}^{\star}\circ\mathcal{E}^{\star}=(\mathcal{E}\circ\mathcal{F})^{\star}$, which proves compositionality of $\mathscr{R}$. 
\eprf

\bn
\label{prop:PetzTensorial}
The assignment $\mathscr{R}:\mathbf{Q}\to\mathbf{Q}^{\op}$ sending a morphism $(\Alg{B},\xi)\xrightarrow{\mathcal{E}}(\Alg{A},\omega)$ to its Petz retrodiction map $(\Alg{B},\xi)\xleftarrow{\mathcal{E}^{\star}}(\Alg{A},\omega)$ is tensorial. 
\en

\bprf
Let $(\Alg{B}_1,\xi_1)\xrightarrow{\mathcal{E}_1}(\Alg{A}_1,\omega_1)$ and $(\Alg{B}_2,\xi_2)\xrightarrow{\mathcal{E}_2}(\Alg{A}_2,\omega_2)$ be a pair of morphisms in $\mathbf{Q}$. 
Then, setting $a=a_1\otimes a_2$ and $b=b_1\otimes b_2$, we have 
\begin{align*}
\big\<\!\big\<b,(\mathcal{E}_{1}\otimes\mathcal{E}_{2})^{\star}(a)\big\>\!\big\>_{\xi_1\otimes\xi_2}
&=\big\<\!\big\<(\mathcal{E}_{1}\otimes\mathcal{E}_{2})(b_1\otimes b_2),a_1\otimes a_2)\big\>\!\big\>_{\omega_1\otimes\omega_2} &&\mbox{ by Definition~\ref{defn:PetzRecoveryMap}} \\
&=\big\<\!\big\<\mathcal{E}_{1}(b_1)\otimes\mathcal{E}_{2}(b_2),a_1\otimes a_2)\big\>\!\big\>_{\omega_1\otimes\omega_2} &&\mbox{ by Definition of $\mathcal{E}_{1}\otimes\mathcal{E}_{2}$} \\
&=\big\<\!\big\<\mathcal{E}_{1}(b_1),a_1\big\>\!\big\>_{\omega_{1}}\big\<\!\big\<\mathcal{E}_{2}(b_2),a_2\big\>\!\big\>_{\omega_{2}} &&\mbox{ by Lemma~\ref{lem:PetzIPtensor}} \\
&=\big\<\!\big\<b_1,\mathcal{E}_{1}^{\star}(a_1)\big\>\!\big\>_{\xi_{1}}\big\<\!\big\<b_2,\mathcal{E}_{2}^{\star}(a_2)\big\>\!\big\>_{\xi_{2}} &&\mbox{ by Definition~\ref{defn:PetzRecoveryMap}} \\
&=\big\<\!\big\<b_1\otimes b_2,\mathcal{E}_{1}^{\star}(a_1)\otimes\mathcal{E}_{2}^{\star}(a_2)\big\>\!\big\>_{\xi_1\otimes\xi_2} &&\mbox{ by Lemma~\ref{lem:PetzIPtensor}} 
\end{align*}
for all $a_1\in\Alg{A}_{1},a_{2}\in\Alg{A}_{2},b_{1}\in\Alg{B}_{1},b_{2}\in\Alg{B}_{2}$. Hence, since the 
KMS inner product is sesquilinear, this result extends to linear combinations, i.e., 
\be
\label{eqn:tensorCPalgebraicproduct}
\big\<\!\big\<b,(\mathcal{E}_1\otimes\mathcal{E}_2)^{\star}(a)\big\>\!\big\>_{\xi_1\otimes\xi_2}=\big\<\!\big\<b,(\mathcal{E}_1^{\star}\otimes\mathcal{E}_2^{\star})(a)\big\>\!\big\>_{\xi_1\otimes\xi_2}
\ee
for all $a\in\Alg{A}_{1}\otimes\Alg{A}_{2}$ and $b\in\Alg{B}_{1}\otimes\Alg{B}_{2}$, where recall that $\otimes$ here denotes the algebraic tensor product (given by finite linear combinations of tensors). We next extend this identity beyond algebraic tensor products. For this, first fix $a\in\Alg{A}_1\otimes\Alg{A}_{2}$ and let $b:B\to\Alg{B}_{1}\otimes\Alg{B}_{2}$ be a net that converges to $b_{\infty}:=\lim_{\beta \in B}b_{\beta}\in\Alg{B}_{1}\overline{\otimes}\Alg{B}_{2}$ in the weak operator topology. Then
\begin{align*}
\big\<\!\big\<b_{\infty},(\mathcal{E}_{1}\otimes\mathcal{E}_{2})^{\star}(a)\big\>\!\big\>_{\xi_1\otimes\xi_2}&=\lim_{\beta\in B} \big\<\!\big\<b_{\beta},(\mathcal{E}_{1}\otimes\mathcal{E}_{2})^{\star}(a)\big\>\!\big\>_{\xi_1\otimes\xi_2} && \mbox{ by Lemma~\ref{limilemm} }\\
&=\lim_{\beta\in B} \big\<\!\big\<b_{\beta},(\mathcal{E}_{1}^{\star}\otimes\mathcal{E}_{2}^{\star})(a)\big\>\!\big\>_{\xi_1\otimes\xi_2} && \mbox{ by~\eqref{eqn:tensorCPalgebraicproduct} } \\
&=\big\<\!\big\<b_{\infty},(\mathcal{E}_{1}^{\star}\otimes\mathcal{E}_{2}^{\star})(a)\big\>\!\big\>_{\xi_1\otimes\xi_2} && \mbox{ by Lemma~\ref{limilemm}.}
\end{align*} 
Since this is true for all $a\in\Alg{A}_{1}\otimes\Alg{A}_{2}$ and for all $b\in\Alg{B}_{1}\overline{\otimes}\Alg{B}_{2}$, we conclude that $\mathcal{E}_{1}^{\star}\otimes\mathcal{E}_{2}^{\star}=(\mathcal{E}_{1}\otimes\mathcal{E}_{2})^{\star}$ on $\Alg{A}_{1}\otimes\Alg{A}_{2}$. In the remainder of the proof, we show that the equality extends to all of $\Alg{A}_{1}\overline{\otimes}\Alg{A}_{2}$. So now let $a:A\to\Alg{A}_{1}\otimes\Alg{A}_{2}$ be a net that converges to $a_{\infty}:=\lim_{\alpha\in A}a_{\alpha}\in\Alg{A}_{1}\overline{\otimes}\Alg{A}_{2}$ in the weak operator topology. Then, using what we've shown thus far, we obtain (dropping the subscript $\xi_{1}\otimes\xi_{2}$ from the 
KMS inner product to avoid clutter)
\begin{align*}
\big\<\!\big\<b,(\mathcal{E}_{1}\otimes\mathcal{E}_{2})^{\star}(a_{\infty})\big\>\!\big\>&= \Big\<\!\!\Big\<b, \lim_{\alpha\in A}(\mathcal{E}_{1}\otimes\mathcal{E}_{2})^{\star}(a_{\alpha})\Big\>\!\!\Big\>&&\mbox{ because $(\mathcal{E}_{1}\otimes\mathcal{E}_{2})^{\star}$ is normal by Theorem~\ref{thm:PetzRecovery} }\\
& && \mbox{ and hence weakly continuous }\\
&=\lim_{\alpha\in A} \big\<\!\big\<b,(\mathcal{E}_{1}\otimes\mathcal{E}_{2})^{\star}(a_{\alpha})\big\>\!\big\>  && \mbox{ by Lemma~\ref{limilemm}} \\
&=\lim_{\alpha\in A}\big\<\!\big\<b,(\mathcal{E}_{1}^{\star}\otimes\mathcal{E}_{2}^{\star})(a_{\alpha})\big\>\!\big\> && \mbox{ since $\mathcal{E}_{1}^{\star}\otimes\mathcal{E}_{2}^{\star}=(\mathcal{E}_{1}\otimes\mathcal{E}_{2})^{\star}$ on $\Alg{A}_{1}\otimes\Alg{A}_{2}$} \\
&=\Big\<\!\!\Big\<b,\lim_{\alpha\in A}(\mathcal{E}_{1}^{\star}\otimes\mathcal{E}_{2}^{\star})(a_{\alpha})\Big\>\!\!\Big\> && \mbox{ by Lemma~\ref{limilemm}} \\
&=\big\<\!\big\<b,(\mathcal{E}_{1}^{\star}\otimes\mathcal{E}_{2}^{\star})(a_{\infty})\big\>\!\big\> && \mbox{ since $\mathcal{E}_{1}^{\star}\otimes\mathcal{E}_{2}^{\star}$ is normal } \\
& && \mbox{ and hence weakly continuous }
\end{align*}
for all $b\in\Alg{B}_{1}\overline{\otimes}\Alg{B}_{2}$. Thus, $\mathcal{E}_{1}^{\star}\otimes\mathcal{E}_{2}^{\star}=(\mathcal{E}_{1}\otimes\mathcal{E}_{2})^{\star}$, proving that the Petz retrodiction assignment is tensorial on $\mathbf{Q}$. 
\eprf

\bprf[Proof of Theorem~\ref{thm:Petzretrodictionfunctor}]
The fact that $\mathscr{R}$ satisfies the first five conditions in Definition~\ref{defn:retrodiction} was proved in Propositions~\ref{prop:PetzFunctorial},~\ref{prop:PetzTensorial}, and~\ref{prop:PetzInverting}. What remains to show is that $\mathscr{R}$ is involutive, i.e., $(\mathcal{E}^{\star})^{\star}=\mathcal{E}$ for all $(\Alg{B},\xi)\xrightarrow{\mathcal{E}}(\Alg{A},\omega)$ in $\mathbf{Q}$. To see this, note that 
\[
\<\!\<\mathcal{E}(b),a\>\!\>_{\omega}
=\<\!\<b,\mathcal{E}^{\star}(a)\>\!\>_{\xi}
=\overline{\<\!\<\mathcal{E}^{\star}(a),b\>\!\>_{\xi}}
=\overline{\<\!\<a,(\mathcal{E}^{\star})^{\star}(b)\>\!\>_{\omega}}
=\<\!\<(\mathcal{E}^{\star})^{\star}(b),a\>\!\>_{\omega}
\]
for all $a\in\Alg{A}$ and $b\in\Alg{B}$. Since $\<\!\<\,\cdot\,,\,\cdot\,\>\!\>_{\omega}$ is nondegenerate, this implies $(\mathcal{E}^{\star})^{\star}=\mathcal{E}$. 
\eprf

\section{Bayesian inference on commutative von Neumann algebras}
\label{sec:commutativevNAlg}

In this section, we will prove that Petz retrodiction restricts to Bayesian inversion on commutative von~Neumann algebras. 
We assume some familiarity with measure theory~\cites{Fo07,Rudin87,GaPa18,PaRu19,FrV4,Ka21}. References on Gelfand duality might also be useful for a broader perspective~\cites{FuJa13,Pa17,Pavlov2022,FritzLorenzin2026}, though are not necessary to follow this section. 
Let $\mathbf{C}$ denote the symmetric monoidal subcategory of $\mathbf{Q}$ consisting of the objects $(\Alg{A},\omega)$ of $\mathbf{Q}$, where $\Alg{A}$ is commutative. 
Despite introducing the notation $\mathbf{C}$ for the category of ``classical'' (i.e., commutative) systems, we will expand on the Petz recovery map on commutative von~Neumann algebras that come from standard Borel spaces in order to make a cleaner connection to probability theory, Markov kernels, conditional probabilities, and Bayes' rule. In fact, we will first begin with the significantly simpler case where the von~Neumann algebras are finite-dimensional in order to motivate the remainder of the section and to make the comments in the opening paragraph of Section~\ref{sec:intro} more precise. Although we could define the Petz recovery map for any commutative von~Neumann algebra, the particular classes of examples we study here will have a cleaner connection to the typical Bayes' rule expressed in terms of transition matrices and/or probability distributions. 

Let $X$ and $Y$ be finite sets (equipped with the discrete measurable structure and the counting measure). Let $\Alg{A}=\C^{X}$ and $\Alg{B}=\C^{Y}$ denote their von~Neumann algebras of $\C$-valued functions on $X$ and $Y$, respectively (by choosing an ordering of the elements of $X$ and $Y$, one can view $\C^{X}$ and $\C^{Y}$ as diagonal matrices inside $\matr_{m}$ and $\matr_{n}$, respectively, where $m=\# X$ and $n=\# Y$ are the cardinalities of $X$ and $Y$, respectively). For each $x\in X$, let $\delta_{x}\in\Alg{A}=\C^{X}$ denote the function given by the Kronecker delta at $x$, i.e., $\delta_{x}(\tilde{x}):=\delta_{x\tilde{x}}$ for $\tilde{x}\in X$, and $\delta_{x\tilde{x}}=1$ if $x=\tilde{x}$ and $0$ otherwise. Let $p$ be a nowhere-vanishing probability distribution on $X$, i.e., a collection of numbers $\{p_{x}\}$ such that $p_{x}>0$ for all $x\in X$ and $\sum_{x\in X}p_{x}=1$. Let $f:X\to Y$ be a stochastic matrix, i.e., a collection of numbers $\{f_{yx}\}$ such that $f_{yx}\ge0$ for all $x\in X$ and $y\in Y$ and such that $\sum_{y\in Y}f_{yx}=1$ for all $x\in X$. Let $q:=p\circ f$ be the probability distribution on $Y$ given by $(p\circ f)_{y}=\sum_{x\in X}f_{yx}p_{x}$. Suppose further that $q$ is nowhere vanishing. Let $\omega:\Alg{A}\to\C$ be the expectation value functional associated with $p$, i.e., $\omega(\varphi)=\sum_{x\in X}p_{x}\varphi(x)$ for every $\varphi\in\Alg{A}=\C^{X}$, and similarly let $\xi$ be the expectation value functional associated with $q$. Finally, let $\mathcal{F}:\Alg{B}\to\Alg{A}$ be the linear map sending $\psi\in\Alg{B}=\C^{Y}$ to the function $\mathcal{F}(\psi)\in\Alg{A}=\C^{X}$ defined on an element $x\in X$ by $\mathcal{F}(\psi)(x):=\sum_{y\in Y} f_{yx} \psi(y)$. Then $(\Alg{B},\xi)\xrightarrow{\mathcal{F}}(\Alg{A},\omega)$ defines a morphism in $\mathbf{Q}$. Moreover, 
\[
\<\!\<\varphi_{1},\varphi_{2}\>\!\>_{\omega}=\sum_{x\in X}\overline{\varphi_{1}}(x)\varphi_{2}(x) \, p_{x}
\]
for all $\varphi_1,\varphi_2\in\Alg{A}=\C^{X}$, and similarly for $\<\!\<\,\cdot\,,\,\cdot\,\>\!\>_{\xi}$ in terms of $q$. Therefore, the Petz recovery map associated with $\mathcal{F}$ is the morphism $\mathcal{F}^{\star}:(\Alg{A},\omega)\to (\Alg{B},\xi)$ in $\mathbf{Q}$, which is induced by the stochastic matrix $\{f^{\star}_{xy}:=\mathcal{F}^{\star}(\delta_{x})(y)\}$, uniquely determined by $\<\!\<\mathcal{F}(\psi),\varphi\>\!\>_{\omega}=\<\!\<\psi,\mathcal{F}^{\star}(\varphi)\>\!\>_{\xi}$. This yields 
\[
\sum_{x\in X} \left(\sum_{y\in Y} \overline{f_{yx} \psi(y)} \right) \varphi(x) \, p_{x} =\sum_{y\in Y}\overline{\psi(y)} \left( \sum_{x\in X} f^{\star}_{xy} \varphi(x) \right) \, q_{y}
\]
for all $\varphi\in\Alg{A}=\C^{X}$ and $\psi\in\Alg{B}=\C^{Y}$. By setting $\varphi=\delta_{x}$ and $\psi=\delta_{y}$, this condition gives 
\[
f_{yx}p_{x}=f^{\star}_{xy} q_{y} \quad\iff\quad f^{\star}_{xy}=\frac{f_{yx}p_{x}}{q_{y}}. 
\]
Interpreting $f_{yx}$ as the conditional probability of $y$ given $x$, written as $\mathbb{P}(y|x)$ in Section~\ref{sec:intro}, and similarly for the other terms, this is precisely Bayes' rule, so that $f^{\star}_{xy}$ is the conditional probability of $x$ given $y$~\cites{Bayes1763,Pearl88}, written as $\mathbb{P}(x|y)$ in Section~\ref{sec:intro}. 

In the remainder of this section, we will extend the previous discussion from finite sets to standard Borel spaces. This takes a surprising amount of work due to the fact that we will describe how the Petz recovery map on function spaces can be represented by Markov kernels on the underlying measure spaces, which are constructed via regular conditional probabilities/consistent disintegrations. As such, we first recall some definitions and facts from measure theory. The reader familiar with the definitions can jump straight to Proposition~\ref{prop:cvnPetz}, the paragraph preceding it, and the discussion that follows. 

A measurable space $X$ will often be written with its sigma algebra $\Sigma_{X}$ as the pair $(X,\Sigma_{X})$. A measure space will be written as a triple $(X,\Sigma_{X},\mu)$, where $\mu$ is the measure. Given a measurable function $\varphi:X\to[0,\infty)$, its \define{essential supremum} is 
\[
\esssup(\varphi):=\inf\Big\{a\ge 0 \,:\, \mu\Big(\varphi^{-1}\big( (a,\infty)\big)\Big) \Big\},
\]
where $\varphi^{-1}\big( (a,\infty)\big):=\big\{x\in X\,:\, \varphi(x)>a\big\}\in\Sigma_{X}$. Now, given any measurable function $\varphi: X\to\C$, define $\lVert \varphi\rVert_{\infty}=\esssup(|\varphi|)$, which is called the \define{$L^{\infty}$ norm} on measurable functions (we note that the dependence of $\mu$ is hidden from the notation). A measurable function $\varphi$ with finite $L^{\infty}$ norm is called \define{essentially bounded}. 
Let $L^{\infty}(X,\mu)$ denote the Banach space of \define{$\mu$-a.e.\ equivalence classes} of essentially bounded measurable functions from $X$ to $\C$, where the equivalence is defined by $\varphi\sim\psi$ iff $\varphi$ and $\psi$ are equal $\mu$-almost everywhere. 
Despite the fact that $L^{\infty}(X,\mu)$ (and later $L^{2}(X,\mu)$) are spaces of equivalence classes of measurable functions, we often call elements of $L^{\infty}(X,\mu)$ by their function representatives.

In fact, $L^{\infty}(X,\mu)$ is a von~Neumann algebra, where complex conjugation is the $*$ operation and multiplication is given by the multiplication of functions. This can be seen by viewing $L^{\infty}(X,\mu)$ as a subalgebra of $\mathcal{B}(\mathcal{H})$, where $\mathcal{H}=L^{2}(X,\mu)$, where $L^{2}(X,\mu)$ is the Hilbert space of \define{square-normalizable} measurable functions $\varphi:X\to\C$, i.e., $\int_{X}|\varphi|^2\,d\mu<\infty$, and where each element $\varphi\in L^{\infty}(X,\mu)$ is viewed as a multiplication operator $M_{\varphi}$. Namely, given also $\psi\in L^{2}(X,\mu)$, the function $M_{\varphi}(\psi)$, given by $\big(M_{\varphi}(\psi)\big)(x):=\varphi(x)\psi(x)$, is still in $L^{2}(X,\mu)$~\cite[Exercise 6.14]{Fo07}. Moreover, $L^{\infty}(X,\mu)$ is closed under the weak operator topology inside $\mathcal{B}(\mathcal{H})$~\cite[Chapter 7 Proposition 1]{Dix81}. 

Given a measure space $(X,\Sigma_{X},\mu)$ and given another measure $p$ on $(X,\Sigma_{X})$, the measure $p$ is said to be \define{absolutely continuous} with respect to $\mu$, written as $p\ll\mu$, iff for any $A\in\Sigma_{X}$ such that $\mu(A)=0$, then $p(A)=0$. If $p$ is a probability measure, the condition $p\ll\mu$ implies that there exists a nonnegative measurable function $\frac{dp}{d\mu}$ on $(X,\Sigma_{X})$, called the \define{Radon--Nikodym derivative} of $p$ with respect to $\mu$, such that 
\[
\int_{X}\frac{dp}{d\mu}\,d\mu=1
\qquad\text{ and }\qquad 
p(A)=\int_{A}\frac{dp}{d\mu}\,d\mu
\]
for all $A\in\Sigma_{X}$~\cite[Section 3.8]{Fo07} (although we do not go into the details here, this notion of Radon--Nikodym derivative is a special case of the one from Appendix~\ref{sec:reviewModularTheory}). 
Thus, $\frac{dp}{d\mu}$ is called the \define{probability density function} associated with $p$ and $\mu$ (it is uniquely specified up to a $\mu$-null set). 
The condition $p\ll\mu$ also guarantees that if $\varphi\in L^{\infty}(X,\mu)$, then 
\[
\int_{X}|\varphi|\,dp
=\int_{X}|\varphi| \frac{dp}{d\mu} \,d\mu
\le \lVert\varphi\rVert_{\infty} \left(\int_{X} \frac{dp}{d\mu} \,d\mu\right)
=\lVert\varphi\rVert_{\infty} \
< \infty
\]
by~\cite[Theorem 6.8]{Fo07}. 
This fact lets us define the functional $\omega:L^{\infty}(X,\mu)\to\C$ by 
\be
\label{eqn:fromptoomega}
\omega(\varphi):=\int_{X}\varphi\,dp=\int_{X}\varphi \frac{dp}{d\mu}\,d\mu
\ee 
for all $\varphi\in L^{\infty}(X,\mu)$.  
Then $\omega$ is a normal state, where normality follows from the monotone convergence theorem~\cite{Fo07}. The probability measure $p$ is \define{faithful} with respect to $\mu$ iff, in addition to $p\ll \mu$, it is also the case that $\mu\ll p$. This is equivalent to the state $\omega$ from~\eqref{eqn:fromptoomega} being faithful, which recall means that if $\varphi\in L^{\infty}(X,\mu)$ satisfies $\omega(\varphi^* \varphi)=0$, then $\varphi=0$ as an element of $L^{\infty}(X,\mu)$, or if we thought of $\varphi$ as a function representative, then this means there exists a $\mu$-null set $N\in\Sigma_{X}$ such that $\varphi(x)=0$ for all $x\in X\setminus N$.%
\footnote{
Suppose $\mu\ll p$ and let $\varphi\in L^{\infty}(X,\mu)$ be such that $0=\omega(\varphi^*\varphi)=\int_{X}|\varphi|^2\,dp$. This implies there exists a $p$-null set $N\in\Sigma_{X}$ such that $\varphi(x)=0$ for all $x\in X\setminus N$. Since $\mu\ll p$, we have $\mu(N)=0$ so that $\varphi=0$ in $L^{\infty}(X,\mu)$. Conversely, suppose $\omega$ is faithful. Suppose that $N\in\Sigma_{X}$ satisfies $p(N)=0$ and let $\varphi=\chi_{N}$ be the indicator function on $N$. Then $\omega(\varphi^*\varphi)=\int_{N}dp=p(N)=0$, which implies there exists a $\mu$-null set $M\in\Sigma_{X}$ such that $\varphi(x)=0$ for all $x\in X\setminus M$. But $\varphi(x)=1$ if and only if $x\in N$, so that $N\subseteq X\setminus (X\setminus M)=M$. Thus, $\mu(N)\le\mu(M)=0$ implies $\mu(N)=0$, which proves $\mu\ll p$. 
}  
Thus, if we set $\Alg{A}:=L^{\infty}(X,\mu)$, then $(\Alg{A},\omega)$ defines an object in $\mathbf{Q}$. For reference, we note that the 
KMS inner product becomes the ordinary $p$-weighted $L^2$ inner product
\be
\label{eqn:ConnesPetzforCvNAlg}
\<\!\<\varphi,\psi\>\!\>_{\omega}
=\<M_{\varphi}\Omega,J_{\Omega}M_{\psi}^{*}\Omega\>
=\int_{X}\overline{\varphi}\psi\,dp
\ee
for $\varphi,\psi\in L^{\infty}(X,\mu)$. 

Next, let $(Y,\Sigma_{Y})$ be another measurable space, and let $f:X\to Y$ be a \define{transition kernel}, i.e., a function $f:X\times\Sigma_{Y}\to[0,\infty]$ such that $\Sigma_{Y}\ni B\mapsto f_{x}(B):=f(x,B)$ is a measure for each $x\in X$ and $X\ni x\mapsto f_{x}(B)$ is a measurable function for each $B\in\Sigma_{Y}$. A transition kernel is called a \define{Markov kernel} whenever $f_{x}$ is a probability measure for all $x\in X$. Note that a measurable function from $X$ to $Y$ can be viewed as a Markov kernel by sending $x\in X$ to the Dirac delta measure on $f(x)$. Another example is a measure $p$ on $(X,\Sigma_{X})$, which can be viewed as a transition kernel $\{\bullet\}\to X$, where $\{\bullet\}$ is a single element set with the unique measurable structure (it is a Markov kernel precisely when $p$ is a probability measure). 

Transition kernels can be composed as follows. If $(Z,\Sigma_{Z})$ is another measurable space, and if $g:Y\to Z$ is another transition kernel, then the \define{composite} $g\circ f$ is the transition kernel defined by 
\[
X\times \Sigma_{Z}\ni (x,C)\mapsto (g\circ f)_{x}(C):=\int_{Y}g_{y}(C)\,df_{x}(y)\,.
\]
Given a transition kernel $f:X\to Y$ and a measure $\mu:\{\bullet\}\to X$, the \define{pushforward measure} of $\mu$ along $f$ is $f\circ \mu$. When $f$ is a measurable function, this agrees with the pushforward measure along a measurable function given by 
\[
(f\circ \mu)(B)=\int_{X} f_{x}(B)\, d\mu(x)
\]
for $B\in\Sigma_{Y}$, which is often denoted by $\mu\circ f^{-1}$ (this notation is no longer appropriate when $f$ is a transition kernel, which is why we use $f\circ \mu$). 

Given measure spaces $(X,\Sigma_{X},\mu)$ and $(Y,\Sigma_{Y},\nu)$, a transition kernel $f:X\to Y$ is said to \define{preserve $\nu$-null sets} iff $f\circ\mu\ll\nu$, i.e., if $\nu(N)=0$ implies $(f\circ\mu)(N)=0$.  
A Markov kernel $f:X\to Y$ that preserves $\nu$-null sets induces an NCPU map $\mathcal{F}:L^{\infty}(Y,\nu)\to L^{\infty}(X,\mu)$ defined by
\be
\label{eqn:MarkovOperator}
\mathcal{F}(\psi)(x):=\int_{Y} \psi \,d f_{x}.
\ee  
The fact that $\mathcal{F}$ is positive and unital is immediate from the definition, normality follows from the monotone convergence theorem, and complete positivity follows from Stinespring's theorem~\cite{St55}. 
If $p$ is, in addition, a probability measure on $(X,\Sigma_{X})$ satisfying $p\ll\mu$, then $q:=f\circ p\ll f\circ \mu \ll \nu$. The functional $\xi:L^{\infty}(Y,\nu)\to\C$ associated with the pushforward probability measure $q=f\circ p$ is 
\be
\label{eqn:fromqtoxi}
\xi(\psi)=\int_{Y} \psi \,dq
\ee
for $\psi\in L^{\infty}(Y,\nu)$. If $\omega$ is the state associated with $p$, then $\xi=\omega\circ\mathcal{F}$. If we assume $q$ is faithful with respect to $\nu$, and if we set $\Alg{A}=L^{\infty}(X,\mu)$ and $\Alg{B}=L^{\infty}(Y,\nu)$, then $(\Alg{B},\xi)\xrightarrow{\mathcal{F}}(\Alg{A},\omega)$ defines a morphism in $\mathbf{Q}$. 

In what follows, we will slowly build a transition kernel $f^{\star}:Y\to X$ whose associated NCPU map $\mathcal{F}^{\star}:\Alg{A}\to\Alg{B}$ is the Petz recovery map of $(\Alg{B},\xi)\xrightarrow{\mathcal{F}}(\Alg{A},\omega)$. This will occur through an intermediate step of building another measure associated with $f$ and $p$ from the previous paragraph. This is the \define{joint probability measure} $f\star p$ on $(X\times Y,\Sigma_{X\times Y})$ uniquely specified by 
\be
\label{eqn:jointpm}
(f\star p)(A\times B):=\int_{A}f_{x}(B)\,dp(x)
\ee
with $A\in\Sigma_{X}$ and $B\in\Sigma_{Y}$. 
Equivalently, since the diagonal map $\Delta:X\to X\times X$ sending $x\in X$ to $(x,x)$ is measurable, $f\star p$ can be defined as $f\star p=(\id_{X}\times f)\circ\Delta\circ p$, where the product $\times$ of transition kernels is defined via the pointwise product of measures~\cite{ChJa18}. 

Two transition kernels $f,f':X\to Y$ are said to be \define{$\mu$-a.e.\ equivalent}, written as $f\aeequals{\mu}f'$, iff for each $B\in\Sigma_{Y}$, there exists a $N_{B}\in\Sigma_{X}$ such that $\mu(N_{B})=0$ and 
\[
f_{x}(B)=f'_{x}(B)\quad\mbox{ for all $x\in X\setminus N_{B}$}. 
\]
Two such $\mu$-a.e.\ equivalent transition kernels satisfy $f\circ p=f'\circ p$ and $f\star p=f'\star p$ whenever $p$ is a measure on $(X,\Sigma_{X})$ satisfying $p\ll \mu$. The latter claim implies the former claim by postcomposing with $\pi_{Y}$. The latter claim follows from the calculation
\[
\begin{split}
(f'\star p)(A\times B)
&=\int_{A}f'_{x}(B)\,dp(x)
=\int_{A}f'_{x}(B)\frac{dp}{d\mu}\,d\mu(x)\\
&=\int_{A}f_{x}(B)\frac{dp}{d\mu}\,d\mu(x)
=\int_{A}f_{x}(B)\,dp(x)
=(f\star p)(A\times B)
\end{split}
\]
for all $A\in\Sigma_{X}$ and $B\in\Sigma_{Y}$. We will later simplify this definition of a.e.\ equivalence when dealing with standard Borel spaces. 

\bd
\label{defn:RCP}
Let $(W,\Sigma_{W},\kappa)$ and $(Z,\Sigma_{Z},\gamma)$ be measure spaces, and let $h:W\to Z$ be a measurable function such that $h\circ \kappa=\gamma$. A \define{regular conditional probability} for $(\kappa,h)$ is a transition kernel $k:Z\to W$ for which there exists a $\gamma$-null set $N\in\Sigma_{Z}$ such that $k_{z}$ is a probability measure for all $z\in Z\setminus N$ and 
\be
\label{eqn:RCPmeasurablesets}
\kappa\big( A\cap h^{-1}(B)\big) = \int_{B}k_{z}(A)\,d\gamma(z)
\ee
for all $A\in\Sigma_{W}$ and $B\in\Sigma_{Z}$. 
\ed

A regular conditional probability $k$ as in Definition~\ref{defn:RCP} is also sometimes called a \define{disintegration of $\kappa$ over $\gamma$ consistent with $h$} because such a transition kernel $k$ satisfies the conditions for a regular conditional probability if and only if it satisfies the two conditions $k\circ\gamma=\kappa$ and  $h\circ k\aeequals{\gamma}\id_{Z}$~\cite[Appendix A]{PaRu19}. In brief, the relationship between these two definitions can be seen by taking $B=Z$ in~\eqref{eqn:RCPmeasurablesets} to arrive at $k\circ\gamma=\kappa$, whereas taking $A=h^{-1}(B)$ in~\eqref{eqn:RCPmeasurablesets} leads to $h\circ k\aeequals{\gamma}\id_{Z}$. 

We next specialize to a class of measure spaces in which regular conditional probabilities (consistent disintegrations) are guaranteed to exist. A \define{Polish space} is a separable and completely metrizable topological space. A \define{standard Borel space} is the measurable space associated with a Polish space and its sigma algebra of Borel sets. 
The reason we focus on standard Borel spaces is because they admit a notion of disintegration~\cite{Rokhlin1949} (see also Refs.~\cites{GaPa18,BaJaMa2015,CDDG17,Ka21} and~\cite[Section 452E]{FrV4}). 

\begin{theorem}[The disintegration theorem]
\label{thm:disintegration}
Let $(W,\Sigma_{W},\kappa)$ and $(Z,\Sigma_{Z},\gamma)$ be standard Borel spaces with $\sigma$-finite measures (for example, $\kappa$ and $\gamma$ could be probability measures), and let $h:W\to Z$ be a Borel measurable function. If $h \circ \kappa\ll\gamma$, then there exists a $\sigma$-finite disintegration $\{k_z : z\in Z\}$ of $\kappa$ over $\gamma$ consistent with $h$. Moreover, this disintegration is unique in the sense that if $\{k'_z : z\in Z\}$ is another $\sigma$-finite disintegration of $\kappa$ over $\gamma$ consistent with $h$, then $k\aeequals{\gamma}k'$. 
\end{theorem}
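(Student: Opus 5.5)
The plan is to reduce to the case of probability measures, build the kernel from conditional expectations on a countable generating algebra, and then extend to a genuine measure by a compactness (Carathéodory) argument on the Polish space $W$. First, since $\gamma$ and $\kappa$ are $\sigma$-finite, choose a strictly positive integrable $\rho$ on $W$ and replace $\kappa$ by the finite measure $\tilde\kappa:=\rho\,\kappa$, normalized to a probability. Next replace $\gamma$ by the equivalent probability measure $\tilde\gamma$ obtained from a strictly positive integrable density. If $\tilde k$ disintegrates $\tilde\kappa$ over $\tilde\gamma$, then
\[
k_z(A):=\frac{d\tilde\gamma}{d\gamma}(z)\int_A \rho^{-1}\,d\tilde k_z
\]
disintegrates $\kappa$ over $\gamma$, and $\sigma$-finiteness of $k_z$ is inherited from $\rho^{-1}$. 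Disintegrating over $\gamma$ rather than $h\circ\kappa$ only multiplies by the Radon--Nikodym derivative $\frac{d(h\circ\kappa)}{d\gamma}$, which exists because $h\circ\kappa\ll\gamma$. So it suffices to treat the case where $\kappa$ is a probability measure and $\gamma=h\circ\kappa$.

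For this case, fix a countable algebra $\mathcal{C}\subseteq\Sigma_W$ generating the Borel sets. I would take $\mathcal{C}$ to come from a compact metrizable model of $W$: a standard Borel space is Borel isomorphic to a Borel subset of $[0,1]$, so one may work in a compact space. For each $A\in\mathcal{C}$, the measure $B\mapsto\kappa(A\cap h^{-1}(B))$ is $\le\gamma$. Take its Radon--Nikodym derivative $k_z(A)\in[0,1]$, which satisfies \eqref{eqn:RCPmeasurablesets} for that $A$. Since $\mathcal{C}$ is countable, one discards a single $\gamma$-null set outside of which the following hold:
\begin{itemize}
\item $A\mapsto k_z(A)$ is finitely additive on $\mathcal{C}$, with $k_z(W)=1$;
\item $k_z$ is inner regular with respect to compact sets on $\mathcal{C}$. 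Concretely, $k_z(A)=\sup_n k_z(K_{A,n})$ for countably many chosen compacts (or closed-set approximants from $\mathcal{C}$), where the equality holds a.e.\ by monotone convergence applied to $\kappa(K_{A,n}\cap h^{-1}(\cdot))\uparrow\kappa(A\cap h^{-1}(\cdot))$.
\end{itemize}

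The main obstacle is the extension step: showing that finite additivity plus compact inner regularity gives countable additivity on $\mathcal{C}$. This is the standard Alexandrov/Carathéodory compactness argument, and then Carathéodory extends $k_z$ uniquely to a probability on $\Sigma_W$. On the null set, put $k_z:=\kappa$ (or any fixed probability). Measurability of $z\mapsto k_z(A)$ and the identity \eqref{eqn:RCPmeasurablesets} pass from $\mathcal{C}$ to $\Sigma_W$ by a monotone class ($\pi$–$\lambda$) argument.

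Uniqueness follows the same pattern. If $k,k'$ both disintegrate $\kappa$ over $\gamma$, then for each $A$ and every $B\in\Sigma_Z$ we have $\int_B k_z(A)\,d\gamma=\int_B k'_z(A)\,d\gamma$. Hence $k_z(A)=k'_z(A)$ for $\gamma$-a.e.\ $z$, by uniqueness of Radon--Nikodym derivatives. This is already the definition of $k\aeequals{\gamma}k'$. In the $\sigma$-finite case one applies the argument on a countable cover of $W$ by sets $W_n$ with $\kappa(W_n)<\infty$, where the integrals are finite.
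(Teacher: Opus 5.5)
The paper does not prove this theorem. It imports it from the literature (Rokhlin, in the $\sigma$-finite form of Gagn\'e--Panangaden, with Fremlin~452E and Kallenberg as alternatives), so your proposal supplies an argument where the paper supplies only a citation.

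Your route is the standard textbook construction, and it is sound:
\begin{itemize}
\item Reduce to a probability $\kappa$ with $\gamma=h\circ\kappa$ using strictly positive densities and a Radon--Nikodym factor.
\item Define $k_z$ on a countable generating algebra by Radon--Nikodym derivatives, discarding a single null set.
\item Get countable additivity from a compact-class argument, then finish with Carath\'eodory and $\pi$--$\lambda$.
\item For uniqueness, use uniqueness of densities. This is exactly the setwise notion $\aeequals{\gamma}$ the paper uses, and the paper's Dynkin footnote upgrades it to a single null set.
\end{itemize}
Your argument makes visible two things the citation hides. First, existence uses only that $W$ is standard Borel; $Z$ can be an arbitrary measurable space. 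Second, consistency with $h$, in the sense of the paper's Definition, comes for free by taking $A=h^{-1}(B')$ in the defining identity. Standardness of $Z$ is only needed for the stronger fibrewise statement that $k_z$ is concentrated on $h^{-1}(\{z\})$ for $\gamma$-a.e.\ $z$, which the cited formulation includes.

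Three details should be made explicit when you write it up.

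First, apply the Radon--Nikodym factor to $h\circ\tilde\kappa$, which is finite, and not to $h\circ\kappa$, which need not be $\sigma$-finite. Your formula for $k_z$ already does this implicitly.

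Second, the compacts used in the extension step must lie in the algebra on which $k_z$ is finitely additive. Otherwise the estimates $k_z(A_n\setminus K_n)<\varepsilon 2^{-n}$ and $k_z(A_N)\le\sum_{n\le N}k_z(A_n\setminus K_n)$ are not available. You can fix this in either of two ways:
\begin{itemize}
\item Close $\mathcal{C}$ up under the chosen compacts; countably many rounds keep it countable.
\item In the compact metric model, generate $\mathcal{C}$ by open and closed balls of rational radii about a countable dense set. Then every member of $\mathcal{C}$ is an increasing union of closed, hence compact, members of $\mathcal{C}$, and monotone convergence gives the a.e.\ inner regularity exactly.
\end{itemize}

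Third, if you realize $W$ as a Borel subset $W_0\subseteq[0,1]$, the kernel you build lives on $[0,1]$. Note that $\int_Z k_z([0,1]\setminus W_0)\,d\gamma=\kappa([0,1]\setminus W_0)=0$. So after redefining on a $\gamma$-null set, each $k_z$ is a probability on $W_0$ and can be pulled back to $W$.
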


We remark that for standard Borel spaces, the notion of a.e.\ equivalence between transition kernels simplifies. Namely, suppose that $k,k':Z\to W$ are two transition kernels such that there exist $N,N'\in\Sigma_{Z}$ with $\gamma(N)=0=\gamma(N')$ such that $k_{z}$ is a probability measure for all $z\in Z\setminus N$ and $k'_{z}$ is a probability measure for all $z\in Z\setminus N'$ (this holds for disintegrations). Then $k$ and $k'$ are $\gamma$-a.e.\ equivalent if and only if there exists a measurable set $M\in\Sigma_{Z}$ with $\gamma(M)=0$ such that $k'_z = k_z$ for all $z\in Z\setminus M$ (notice that this condition is a-priori stronger than the previous definition). This holds because of  the separable assumption, which guarantees a countable base for $\Sigma_{W}$, and can be proved using the Dynkin $\pi$-$\lambda$ theorem~\cite[Theorem 3.2]{Billingsley1995},~\cite{Sengupta2006}.%
\footnote{
\label{footnote:Dynkin}
Here is a sketch of the proof. Let $\{B_{n}\}$ be an at most countable \define{$\pi$-system}, which means that it is closed under finite intersections, that generates $\Sigma_{W}$ (existence is guaranteed by the assumption that $(W,\Sigma_{W})$ is standard Borel). For each $n$, let $N_{n}\in\Sigma_{Z}$ satisfy $\gamma(N_{n})=0$ and $k_{z}(B_{n})=k'_{z}(B_{n})$ for all $z\in Z\setminus N_{n}$. Set $M:=N\cup N'\cup\bigcup_{n} N_{n}$. Then $\gamma(M)=0$ since the union is countable. The claim is that $k_{z}=k'_{z}$ for all $z\in Z\setminus M$. To see this, define $\Lambda:=\{B\in \Sigma_{W} \,:\, k_{z}(B)=k'_{z}(B)\;\forall\,z\in Z\setminus M\}$. Then, $\Lambda$ is a \define{$\lambda$-system}, which means $\varnothing\in\Lambda$, if $B\in\Lambda$ then $B^{c}:=W\setminus B\in\Lambda$, and $\Lambda$ is closed under countable disjoint unions. Since $\{B_{n}\}$ is a $\pi$-system generating $\Sigma_{W}$, the Dynkin $\pi$-$\lambda$ theorem guarantees that $\Sigma_{W}\subseteq\Lambda$, which proves the claim.}
An important consequence of this is that, when $\kappa$ and $\gamma$ are probability measures, the disintegration $k:Z\to W$ from Theorem~\ref{thm:disintegration} is $\gamma$-a.e.\ equivalent to a Markov kernel, and we will henceforth refer to it as such. This is done, for example, by taking that $\gamma$-null set $N$ such that $k_{z}$ is a probability measure for all $z\in Z\setminus N$, and defining $k':Z\to W$ to be $k'_{z}:=\chi_{Z\setminus N}(z) k_{z}+\chi_{N}(z)\varsigma$ for all $z\in Z$, where $\varsigma$ is any fixed probability measure on $(W,\Sigma_{W})$. Then $k'$ is indeed a Markov kernel that satisfies the same defining properties of a consistent disintegration as $k$. 

We will apply the disintegration theorem for a particular setup in the construction of a Bayesian inverse to a Markov kernel~\cite{CuSt14}. Let $(X,\Sigma_{X},\mu)$ and $(Y,\Sigma_{Y},\nu)$ be standard Borel spaces equipped with measures. Let $p$ and $q$ be probability measures on $(X,\Sigma_{X})$ and $(Y,\Sigma_{Y}$) such that $p\ll\mu$ and $q\ll\nu$ and such that $p$ and $q$ are faithful with respect to $\mu$ and $\nu$, respectively. Let $f:X\to Y$ be a Markov kernel such that $f\circ p=q$. Then $(X\times Y,\Sigma_{X\times Y})$ is a standard Borel space~\cite[Lemma 1.2]{Ka21} and $f\star p\ll \mu\times\nu$, where $\mu\times\nu$ is the product measure. Let $\pi_{X}:X\times Y\to X$ and $\pi_{Y}:X\times Y\to Y$ denote the standard projection maps, and note that $\pi_{X}\circ (f\star p)=p$ and $\pi_{Y}\circ (f\star p)=q$. Thus, the disintegration theorem implies that there exists a disintegration $r:Y\to X\times Y$ of $f\star p$ over $q$ consistent with $\pi_{Y}$. This means that there exists a $q$-null set $N\in\Sigma_{Y}$ such that $r_{y}$ is a probability measure on $(X\times Y,\Sigma_{X\times Y})$ for all $y\in Y\setminus N$ and 
\[
\int_{A}f_{x}(B_1\cap B_2)\,dp(x)
=(f\star p)\big(A\times (B_1\cap B_2) \big)
=(f\star p) \Big((A\times B_{1}) \cap \pi_{Y}^{-1} (B_{2})\Big)
=\int_{B_{2}}r_{y}(A\times B_1)\, dq(y)
\]
holds for all $A\in\Sigma_{X}$ and $B_1,B_2\in\Sigma_{Y}$, 
due to~\eqref{eqn:RCPmeasurablesets} and the fact that $\pi_{Y}^{-1}(B_{2})=X\times B_{2}$. In particular, taking $B_1=Y$ and setting $B:=B_2$ yields
\be
\label{eqn:fstarprq}
\int_{A}f_{x}(B)\,dp(x)=\int_{B}r_{y}(A\times Y)\,dq(y)
\ee
for all $A\in\Sigma_{X}$ and $B\in\Sigma_{Y}$, which is the statement $f\star p=r\circ q$. 
Set $f^{\star}:=\pi_{X}\circ r$, which can be taken to be a Markov kernel from $Y$ to $X$ since $r$ is $\nu$-a.e.\ equivalent to a Markov kernel. The construction of $f^{\star}$ can be summarized in the following two commutative diagrams 
\[
\xy0;/r.25pc/:
(-15,-7.5)*+{(X,p)}="X";
(15,-7.5)*+{(Y,q)}="Y";
(0,7.5)*+{(X\times Y,f\star p)}="XY";
{\ar"X";"Y"_{f}};
{\ar"X";"XY"^(0.4){(\id_{X}\times f)\circ\Delta}};
{\ar"XY";"Y"^(0.55){\pi_{Y}}};
\endxy
\quad\implies\quad
\xy0;/r.25pc/:
(-15,-7.5)*+{(X,p)}="X";
(15,-7.5)*+{(Y,q)}="Y";
(0,7.5)*+{(X\times Y,f\star p)}="XY";
{\ar"Y";"X"^{f^{\star}}};
{\ar"XY";"X"_(0.55){\pi_{X}}};
{\ar"Y";"XY"_(0.4){r}};
\endxy
\]
and a formula for it as a transition kernel is given by 
\[
f^{\star}_{y}(A)=r_{y}(A\times Y)
\]
for $y\in Y$ and $A\in\Sigma_{X}$ (we will provide a more explicit formula in terms of conditional probability distributions after Proposition~\ref{prop:cvnPetz}). 
Now, let $\Alg{A}:=L^{\infty}(X,\mu)$ and $\Alg{B}:=L^{\infty}(Y,\nu)$. Let $\omega$ be the faithful normal state associated with $p$ as in~\eqref{eqn:fromptoomega}, let $\xi$ be the faithful normal state associated with $q$ as in~\eqref{eqn:fromqtoxi}, and let $\mathcal{F}$ be the NCPU map associated with $f$ as in~\eqref{eqn:MarkovOperator}. Similarly, let $\mathcal{F}^{\star}$ be the NCPU map associated with $f^{\star}$, i.e., 
\be
\label{eqn:fromfstartoFstar}
\mathcal{F}^{\star}(\varphi)(y)
:=\int_{X}\varphi \, df^{\star}_{y}
\ee
for all $\varphi\in L^{\infty}(X,\mu)$ and $y\in Y$. 

\bn
\label{prop:cvnPetz}
Given the notation in the previous paragraph, i.e., a morphism $(L^{\infty}(Y,\nu),\xi)\xrightarrow{\mathcal{F}} (L^{\infty}(X,\mu),\omega)$ in $\mathbf{C}\subset\mathbf{Q}$, the NCPU map $\mathcal{F}^{\star}:L^{\infty}(X,\mu)\to L^{\infty}(Y,\nu)$ defined by~\eqref{eqn:fromfstartoFstar} is the Petz recovery map of $\mathcal{F}$. 
\en

\bprf
On the one hand
\be
\label{eqn:LHScvn}
\big\<\!\big\<\mathcal{F}(\psi),\varphi\big\>\!\big\>_{\omega}
=\int_{X} \overline{\left(\int_{Y}\psi\,df_{x}\right)}\varphi(x)\,dp(x)
=\int_{X}\int_{Y}\overline{\psi}(y)\varphi(x)\,df_{x}(y)\,dp(x), 
\ee
while on the other hand
\be
\label{eqn:RHScvn}
\big\<\!\big\<\psi,\mathcal{F}^{\star}(\varphi)\big\>\!\big\>_{\xi}
=\int_{Y}\overline{\psi}(y)\left(\int_{X}\varphi \, df^{\star}_{y}\right)\,dq(y)
=\int_{Y}\int_{X}\overline{\psi}(y)\varphi(x)\,df^{\star}_{y}(x)\,dq(y)
\ee
for all $\varphi\in L^{\infty}(X,\mu)$ and $\psi\in L^{\infty}(Y,\nu)$. Taking $\varphi=\chi_{A}$ and $\psi=\chi_{B}$ to be characteristic functions on $A\in\Sigma_{X}$ and $B\in\Sigma_{Y}$, respectively,~\eqref{eqn:LHScvn} becomes
\be
\label{eqn:LHScvn2}
\big\<\!\big\<\mathcal{F}(\psi),\varphi\big\>\!\big\>_{\omega}
=\int_{A}\int_{B}df_{x}(y)\,dp(x)
=\int_{A}f_{x}(B)\,dp(x),
\ee
while~\eqref{eqn:RHScvn} becomes
\be
\label{eqn:RHScvn2}
\big\<\!\big\<\psi,\mathcal{F}^{\star}(\varphi)\big\>\!\big\>_{\xi}
=\int_{B}\int_{A}df^{\star}_{y}(x)\,dq(y)
=\int_{B}f^{\star}_{y}(A)\,dq(y)
=\int_{B}r_{y}(A\times Y)\,dq(y).
\ee
Thus, by~\eqref{eqn:fstarprq}, $\big\<\!\big\<\mathcal{F}(\psi),\varphi\big\>\!\big\>_{\omega}=\big\<\!\big\<\psi,\mathcal{F}^{\star}(\varphi)\big\>\!\big\>_{\xi}$ for characteristic functions $\varphi=\chi_{A}$ and $\psi=\chi_{B}$. Since all measurable functions are limits of linear combinations of characteristic functions, this proves $\mathcal{F}^{\star}$ is the Petz recovery map of $\mathcal{F}$. 
\eprf

Before concluding this section, we relate the identity $\big\<\!\big\<\mathcal{F}(\psi),\varphi\big\>\!\big\>_{\omega}=\big\<\!\big\<\psi,\mathcal{F}^{\star}(\varphi)\big\>\!\big\>_{\xi}$ to a more common version of Bayes' rule in terms of probability density functions. We do this by using the Radon--Nikodym derivative to re-express the measures in the equality
\be
\label{eqn:almostBayescnV}
\int_{A}\int_{B}df_{x}(y)\,dp(x)=\int_{B}\int_{A}df^{\star}_{y}(x)\,dq(y),
\ee
which came from the fact that~\eqref{eqn:LHScvn2} equals~\eqref{eqn:RHScvn2}. To do this, we first note that if a transition kernel $f:X\to Y$ satisfies $f\circ\mu\ll\nu$ and if $p$ is a probability distribution on $X$ that satisfies $p\ll\mu$ and is also faithful, i.e., $\mu\ll p$, then $f_{x}\ll\nu$ for $\mu$-almost all $x\in X$. To see this, suppose that $\nu(N)=0$ with $N\in\Sigma_{Y}$. Since $f\circ p\ll\nu$, we have 
\[
0=(f\circ p)(N)=\int_{X}f_{x}(N)\,dp(x). 
\]
Since $f_{x}(N)\ge0$ for all $x\in X$, there exists a $p$-null set $M_{N}\in\Sigma_{X}$ such that $f_{x}(N)=0$ for all $x\in X\setminus M_{N}$. Hence, $\mu(M_{N})=0$ since $p$ is faithful with respect to $\mu$. Thus, $f_{x}(N)=0$ for $\mu$-almost all $x\in X$. The Dynkin $\pi$-$\lambda$ theorem~\cite{Sengupta2006} can be used here as well (compare Footnote~\ref{footnote:Dynkin}) to conclude that there exists a single $p$-null set $M\in \Sigma_{X}$ such that $f_{x}(N)=0$ for all $\nu$-null sets $N\in\Sigma_{Y}$ and for all $x\in X\setminus M$. Using this, we obtain 
\be
\int_{A}\int_{B}\frac{df_{x}}{d\nu}(y)\frac{dp}{d\mu}(x)\,d\nu(y)\,d\mu(x)=\int_{B}\int_{A}\frac{df^{\star}_{y}}{d\mu}(x) \frac{dq}{d\nu}(y) \, d\mu(x) \,d\nu(y)
\ee
directly from~\eqref{eqn:almostBayescnV}. Since this is true for all $A\in\Sigma_{X}$ and $B\in\Sigma_{Y}$, the integrands are equal almost everywhere with respect to the product measure $\mu\times\nu$ on $(X\times Y,\Sigma_{X\times Y})$ by Fubini's theorem. Thus, 
\be
\label{eqn:Bayesdensities}
\frac{df_{x}}{d\nu}(y)\frac{dp}{d\mu}(x)=\frac{df^{\star}_{y}}{d\mu}(x) \frac{dq}{d\nu}(y)
\ee
for $(\mu\times\nu)$-almost all $(x,y)\in X\times Y$. The measurable function $\frac{df_{x}}{d\nu}(y)$ is the \define{conditional probability distribution} of $y$ given $x$. By making the identifications 
\[
\mathbb{P}(y|x):=\frac{df_{x}}{d\nu}(y),
\qquad
\mathbb{P}(x)=\frac{dp}{d\mu}(x),
\qquad
\mathbb{P}(x|y):=\frac{df^{\star}_{y}}{d\mu}(x),
\qquad
\mathbb{P}(y)=\frac{dq}{d\nu}(y),
\]
the equality~\eqref{eqn:Bayesdensities} becomes
\[
\mathbb{P}(y|x)\mathbb{P}(x)=\mathbb{P}(x|y)\mathbb{P}(y)
\quad
\iff
\quad
\mathbb{P}(x|y)=\frac{\mathbb{P}(y|x)\mathbb{P}(x)}{\mathbb{P}(y)}, 
\]
which is the familiar Bayes' rule for conditional probability distribution functions~\cite[Theorem 1.31]{Schervish1995},~\cites{Bayes1763,Pearl88}.

\section{Markov maps are covariant quantum operations}
\label{sec:MarkovMaps}

In this section, we study a subclass of morphisms $(\Alg{B},\xi)\xrightarrow{\mathcal{E}}(\Alg{A},\omega)$ in $\mathbf{Q}$ that obey a particular modular covariance property formalized by Accardi and Cecchini~\cite{AcCe82}, which has its origins in Takesaki's theorem on conditional expectations~\cite{Takesaki1972conditional}. 

\bd
Let $(\Alg{A},\omega)$ and $(\Alg{B},\xi)$ be objects in $\mathbf{Q}$. Let $\sigma^{\omega}_{t}:\Alg{A}\to\Alg{A}$ and $\sigma^{\xi}_{t}:\Alg{B}\to\Alg{B}$ denote the modular automorphism groups associated with $(\Alg{A},\omega)$ and $(\Alg{B},\xi)$, respectively. A morphism $(\Alg{B},\xi)\xrightarrow{\mathcal{E}}(\Alg{A},\omega)$ in $\mathbf{Q}$ is a \define{Markov map} iff 
\[
\sigma^{\omega}_{t}\circ\mathcal{E}=\mathcal{E}\circ\sigma^{\xi}_{t}
\quad\text{ for all $t\in\R$}, 
\]
which is called the \define{Accardi--Cecchini condition} for the morphism $\mathcal{E}$.
Given a morphism $(\Alg{B},\xi)\xrightarrow{\mathcal{E}}(\Alg{A},\omega)$ in $\mathbf{Q}$, its \define{GNS adjoint} is the linear map $\mathcal{E}_{\mathrm{GNS}}^{\star}:\Alg{A}\to\Alg{B}$ satisfying
\[
\omega\big( \mathcal{E}(b)^* a \big) = \xi \big( b^* \mathcal{E}_{\mathrm{GNS}}^{\star} (a)\big) 
\quad\text{ for all $a\in\Alg{A}$, $b\in\Alg{B}$}. 
\]
\ed

There are many names that could be used instead of \emph{Markov maps}~\cites{HaMu2011,Anantharaman2006}, including \emph{modular flow intertwiner} or \emph{modular time-covariant morphism}. The former is reasonable since the modular group is a representation of $\R$ on the automorphism group of the von~Neumann algebra, and $\mathcal{E}$ serves as an intertwiner for this representation~\cite{Ha13}. When the modular automorphism group can be interpreted as time translation, this is sometimes called \emph{time-translation symmetry} or \emph{time-translation invariance}~\cites{LKJR15,Marvian2020coherence}. The terminology of \emph{covariant quantum operations} is also used, though those are applicable to more general group actions and symmetries, not necessarily restricted to the symmetry induced by modular flow~\cites{Ma12,MarvianSpekkens2014}. Examples of Markov maps abound, and besides the references already mentioned, additional examples include Davies' maps in the context of open systems and thermal equilibrium~\cites{Davies74,RFZ10}, thermal operations~\cite{AWWW18}, perfect error-correcting codes~\cites{KnillLaflamme1997,PaBayes}, state-preserving conditional expectations~\cites{Takesaki1972conditional,GPRR21}, generalized (hypergroup) symmetries of conformal nets~\cites{Bis17,BiDeGi2021}, GNS detailed balance~\cite{CaMa17}, and much more. 

\bn
The collection of Markov maps inside $\mathbf{Q}$ defines a symmetric monoidal subcategory $\mathbf{M}$ of $\mathbf{Q}$. Moreover, $\mathcal{E}^{\star}=\mathcal{E}_{\mathrm{GNS}}^{\star}$ for all morphisms $\mathcal{E}$ in $\mathbf{M}$.
\en

Thus, not only does the Petz retrodiction functor $\mathscr{R}$ restrict to $\mathbf{M}$ in the sense that $\mathscr{R}(\mathcal{E})$ is a morphism in $\mathbf{M}$ whenever $\mathcal{E}$ is a morphism in $\mathbf{M}$, but the Petz retrodiction of a morphism coincides with the GNS adjoint of that morphism. 

\bprf
The claim that $\mathbf{M}$ is a symmetric monoidal subcategory of $\mathbf{Q}$ follows directly from the definitions and/or from techniques that we have already employed in this work. Namely, identity morphisms satisfy the Accardi--Cecchini condition, the composition of $(\Alg{C},\zeta)\xrightarrow{\mathcal{F}}(\Alg{B},\xi)\xrightarrow{\mathcal{E}}(\Alg{A},\omega)$ also satisfies it because
\[
\sigma^{\omega}_{t}\circ\mathcal{E}\circ\mathcal{F}
=\mathcal{E}\circ\sigma^{\xi}_{t}\circ\mathcal{F}
=\mathcal{E}\circ\mathcal{F}\circ\sigma^{\zeta}_{t}
\]
for all $t\in\R$. 
Finally, the tensor product of two Markov maps is itself a Markov map because the modular operators are tensorial, as discussed in Appendix~\ref{sec:reviewModularTheory}, and hence the modular automorphism group is also tensorial~\cite[Proposition 4.3 Chapter VIII]{Tak2}. As for the final claim, if $\mathcal{E}_{\mathrm{GNS}}^{\star}$ denotes the GNS adjoint of a Markov map $\mathcal{E}$, which necessarily exists, then complete positivity of $\mathcal{E}_{\mathrm{GNS}}^{\star}$ is equivalent to the morphism $(\Alg{B},\xi)\xrightarrow{\mathcal{E}}(\Alg{A},\omega)$ being a Markov map~\cite[Proposition 6.1]{AcCe82} (see Ref.~\cite{GPRR21} for a relatively simple proof in finite dimensions). Let $(\mathcal{H},\pi,\Omega)$, $(\mathcal{K},\tau,\Xi)$, $\Alg{M}$, $\Alg{N}$, and $E:\Alg{N}\,\Xi\to\Alg{M}\,\Omega$ be defined as in Lemma~\ref{lem:contractionforE}, and let us identify $\Alg{A}$ with $\Alg{M}$, $\Alg{B}$ with $\Alg{N}$, and $\mathcal{E}$ with $\widetilde{\mathcal{E}}$ for the calculations that follow. In terms of $E$, the Accardi--Cecchini condition is equivalent to~\cite[Proposition 6.1]{AcCe82}
\be
\label{eqn:JEEJ}
J_{\Omega} E=E J_{\Xi}. 
\ee
Now, translating the definition of the GNS adjoint in terms of the cyclic and separating vector representations yields 
\[
\big\< b\,\Xi, E^* a\,\Omega\>
=\< E b \, \Xi, a\,\Omega\>
=\big\<\mathcal{E}(b)\,\Omega, a\,\Omega\big\>
=\omega\big( \mathcal{E}(b)^* a \big)
=\xi \big( b^* \mathcal{E}_{\mathrm{GNS}}^{\star} (a)\big) 
=\big\<b\,\Xi,\mathcal{E}^{\star}_{\mathrm{GNS}}(a)\,\Xi\big\>
\]
for all $a\in\Alg{A}$ and $b\in\Alg{B}$. Thus, by cyclicity of $\Xi$, we conclude that 
\be
\label{eqn:EGNS}
\mathcal{E}^{\star}_{\mathrm{GNS}}(a) \,\Xi = E^* a\,\Omega
\ee
for all $a\in\Alg{A}$. Hence, 
\begin{align*}
\big\<\!\big\< \mathcal{E}(b),a\big\>\!\big\>_{\omega}
&= \big\langle \E(b) \, \Omega, J_{\Omega} a^* \, \Omega \big\rangle && \mbox{ by definition of $\<\!\<\,\cdot\,,\,\cdot\,\>\!\>_{\omega}$}\\
&= \big\langle E b \, \Xi, J_{\Omega} a^* \, \Omega \big\rangle && \mbox{ by definition of $E$ } \\
&= \overline{\big\langle J_{\Omega} E b \, \Xi, a^* \, \Omega \big\rangle} && \mbox{ since $J_{\Omega}$ is a conjugate-linear unitary } \\
&= \overline{\big\langle E J_{\Xi} b \, \Xi, a^* \, \Omega \big\rangle} && \mbox{ by~\eqref{eqn:JEEJ}}\\
& = \big\langle b \, \Xi, J_{\Xi} E^* a^* \, \Omega \big\rangle && \mbox{ since $J_{\Xi}$ is a conjugate-linear unitary } \\ 
&=  \big\langle b \, \Xi, J_{\Xi} \mathcal{E}^{\star}_{\mathrm{GNS}}( a^* ) \, \Xi \big\rangle && \mbox{ by~\eqref{eqn:EGNS}}\\ 
&= \big\<\!\big\< b,\mathcal{E}^{\star}_{\mathrm{GNS}}(a) \big\>\!\big\>_{\xi} && \mbox{ since $\mathcal{E}^{\star}_{\mathrm{GNS}}$ is positive and by definition of $\<\!\<\,\cdot\,,\,\cdot\,\>\!\>_{\xi}$},
\end{align*}
which, by the uniqueness of the Petz recovery map, proves that the GNS adjoint of the morphism $\mathcal{E}$ equals the Petz recovery map of $\mathcal{E}$. 
\eprf

\section{Summary and discussion}\label{sec:conclusion}

In this work, we showed that the Petz recovery map for state-preserving normal completely positive unital (NCPU) maps between arbitrary von~Neumann algebras satisfies the same categorical axioms that classical Bayesian inversion satisfies. 
This provides a rigorous and structural argument for how the Petz recovery map extends Bayes' rule beyond the commutative setting, and it shows how the Petz recovery map defines a retrodiction functor~\cites{PaBu22,Pa24}. 
Although some of these properties were known since the original work of Petz~\cite{Pe84}, our work improves on these results in three ways; namely, we included the monoidal structure, we explicitly showed how this functor restricts to the standard Bayes' rule for standard Borel spaces~\cites{Bayes1763,Pearl88,Schervish1995}, and we showed how the essential properties of the Petz recovery map and Bayesian inversion satisfy purely categorical axioms. 

In the process, we also reviewed several aspects of the Petz recovery map, including its historical derivation together with a clear connection between the version often used in the quantum information theory literature~\cites{Wilde15,WildeQIT16} (whose formula is valid for finite-dimensional systems but is no longer valid for the algebras appearing in quantum field theory~\cite{Yngvason2005}) and the original definition in terms of modular theory for von~Neumann algebras~\cite{Pe84}. By framing the construction of the Petz recovery map from a categorical perspective and proving that it defines a retrodiction functor~\cites{PaBu22,Pa24}, we showed how its restriction to several subcategories agrees with other notions of adjoints or inversion in the literature. For example, it specializes to the case of the actual inverse (hence the name) when the NCPU maps are invertible and have an NCPU inverse, i.e., an isomorphism in the category. We also showed how it specializes to the GNS adjoint and the notions of Bayesian inverses from categorical probability~\cites{CDDG17,Jacobs2020,ChJa18,Fr20,PaBayes,PaRuBayes} (the notion of Bayesian inversion in categorical probability coincides with the notion of GNS adjoint~\cites{PaRuBayes,PaBayes,GPRR21,FuPa22a}). We summarize many of these results in Figure~\ref{fig:diagramfunctors}, which is an extension of the results from Ref.~\cite{PaBu22}.

Motivated by the results of Ref.~\cite{PaBu22}, which showed that a variety of other endofunctors (such as the rotated Petz recovery map~\cite{Wilde15} and its averaged variants~\cites{JRSWW16,JRSWW18}) are \emph{not} retrodiction functors, we state the following conjecture about the uniqueness of retrodiction. 

\begin{conjecture}
If $\mathscr{R}$ is a retrodiction functor on $\mathbf{Q}$, then $\mathscr{R}$ coincides with the Petz retrodiction functor. In other words, Petz retrodiction is uniquely characterized by the axioms in Definition~\ref{defn:retrodiction}. In particular, Bayesian inference is characterized by these same axioms. 
Informally: classical and quantum inference are uniquely characterized by structural process-theoretic axioms. 
\end{conjecture}

Existing derivations of Bayes' rule, Bayesian inversion, and/or the Petz recovery map are often based on optimizing a wide variety of statistical distances, fidelity, divergences, entropy, or information gain/loss~\cites{Zellner1988,Caticha2021,BaiBuscemiScarani2025,Cs91,ShoreJohnson1980}. In contrast, this conjecture suggests a fundamentally different \emph{definition} of classical and quantum inference that is based on structural and categorical axioms, in which the more familiar optimization-based principles would emerge as consequences, rather than as assumptions. This approach is somewhat similar in spirit to Refs.~\cites{Cs91,ShoreJohnson1980}, but instead utilizes natural process-theoretic axioms rather than specific linear algebra and probabilistic concepts like affine subspaces, convex constraints, and projections that seem specific to classical probability and are still rooted in optimization-based approaches. Our conjecture also goes beyond ordinary Bayes' rule, which specifies a posterior based on hard evidence, in that it aims to characterize the mechanism of classical and quantum statistical inference that can be used more broadly for soft evidence, Jeffrey's update rule, Pearl's virtual evidence method, and probability kinematics~\cites{Je90,Ja19,ChanDarwiche2005,Pearl88}. In other words, the conjecture claims that Bayes' rule is not merely an algorithm, but a structural necessity. 
In addition to mathematically formalizing this major open problem, we provide several other questions for future research. 

(1) 
The Petz recovery map is seen as one instance of recent quantum generalizations of Bayes' rule and Bayesian inverses~\cites{FuPa22a,Ts22,SASDS23}. Although conditions for the existence of GNS Bayesian inverses are well-known~\cites{AcCe82,PaRuBayes,FaUm07,GPRR21}, it is significantly less clear what conditions (say, in terms of the modular group) are needed for other types of Bayesian inverses to exist, such as the one of Ref.~\cite{PaFu24TSC}. The latter is defined as the adjoint with respect to the inner product
\[
\<\!\<a_1,a_2\>\!\>_{\omega}^{\mathrm{J}}:=\frac{1}{2}\omega(a_1^* a_2+a_2 a_1^*)
\]
for all $a_1,a_2\in\Alg{A}$ with $\omega$ a faithful state on $\Alg{A}$. 
Only recently has there been a characterization for unital qubit quantum channels~\cite{TiFuWu2026} and for the amplitude-damping channel~\cite{PaFu24TSC}. The importance of understanding this problem is closely connected to distinguishing between correlation and causation in measurements of Pauli observables for multiqubit systems~\cites{FJV15,SNREG23,JSK23,SongParzygnat2025,FuPa24a,liu2023quantum}. 

(2) 
In our category $\mathbf{Q}$, if the faithful states on the von~Neumann algebras are replaced by semifinite weights, how does the expression for the Petz recovery map change? 

(3) Although we did not discuss it here, because the crossed product construction can take a faithful state on a type $\text{III}_{1}$ von~Neumann algebra and transfer it to a faithful state on a type $\text{II}_{\infty}$ von~Neumann algebra~\cite{Takesaki1973CrossProduct}, could we use the semifinite trace in order to write a more direct formula for Petz recovery maps involving densities defined with respect to the semifinite trace? Such a Petz recovery map seems like it would be useful in the context of quantum field theory and gravity~\cites{Witten2022CrossedProduct,AhmadKlinger2025}. 

(4) Regarding the crossed product $\Alg{A}\rtimes_{\sigma}\R$ associated with a type $\text{III}_{1}$ von~Neumann algebra and a faithful state $(\Alg{A},\omega)$, if $\mathcal{E}:\Alg{A}\rtimes_{\sigma}\R\to\Alg{A}$ denotes the restriction of the map on $\Alg{A}\overline{\otimes}\mathcal{B}(L^2(\R))\to\Alg{A}$ uniquely determined by sending $a\otimes T$ to $\xi(T) a$ for some fixed faithful state $\xi$ on $\mathcal{B}(L^2(\R))$, is there a more explicit description of the Petz recovery map of $\mathcal{E}$ in this case? Since this map appears in Ref.~\cite{AhmadKlinger2025}, we expect it to provide some implications towards quantum gravity~\cite{Witten2022CrossedProduct}. 

(5) Beyond classical and quantum probability as described by von~Neumann algebras, in what other context do retrodiction functors appear? Is the definition sufficiently robust enough so as to merit being further developed as an abstract categorical concept to be later specialized in a variety of settings? As suggested by Ref.~\cites{PaBu22,Pa24}, such a notion would generalize the concept of time reversal, or in mathematical terms the notion of an inverse.

\appendix

\section{Concepts from analysis and von~Neumann algebras}
\label{sec:reviewModularTheory}

Here, we review some elements of functional analysis~\cites{Fo07,Rudin1991,ReedSimonI1980,Schmudgen2012}, von~Neumann algebras~\cites{Topping1971,Dix81,Fillmore96,BrownOzawa2008,Paulsen02,Tak1}, and modular theory~\cites{Takesaki1970,Tak2,StratilaZsido2019,StratilaModular2020}. Introductions with a focus on quantum applications include Refs.~\cites{Sorce2023,Witten18,Schechter2002,OhPe93,HallQuantum13,Teschl2014}. We find Ref.~\cite{Sorce2023} particularly clear and concise. We assume some basic familiarity with topology and analysis~\cites{Munkres2000,Fo07,Rudin76}. In what follows, all Hilbert spaces and $*$-algebras will be defined over the field of complex numbers. All Hilbert spaces will be assumed separable and equipped with their standard norm topologies, with the norm $\lVert\;\cdot\;\rVert$ induced from its inner product $\<\;\cdot\;,\;\cdot\;\>$. All algebras will be assumed unital, and the unit of an algebra $\Alg{M}$ will be denoted by $1_{\Alg{M}}$. 

Given Hilbert spaces $\mathcal{H}$ and $\mathcal{K}$, an \define{operator} from $\mathcal{H}$ to $\mathcal{K}$ is a pair $(\mathcal{D}_{T},T)$, where $\mathcal{D}_{T}\subseteq\mathcal{H}$ is a vector subspace of $\mathcal{H}$ and $T:\mathcal{D}_{T}\to\mathcal{K}$ is a linear transformation. The subspace $\mathcal{D}_{T}$ is called the \define{domain} of $T$ (if no domain is mentioned, it is either specified in the surrounding text or it can be assumed that $\mathcal{D}_{T}=\mathcal{H}$). A \define{conjugate-linear operator} from $\mathcal{H}$ to $\mathcal{K}$ is essentially the same except that it is \emph{conjugate}-linear, meaning that $T(cx)=\overline{c}T(x)$ for all $c\in\C$ and $x\in\mathcal{D}_{T}$, where $\overline{c}$ denotes the complex conjugate of $c$. Most definitions that follow hold equally well for conjugate-linear operators as for ordinary operators, except that tensor products are not defined between linear and conjugate-linear operators (additionally, the definition of the adjoint of a densely defined conjugate-linear operator is different, as will be spelled out later). 
If $(\mathcal{D}_{T},T)$ is an operator from $\mathcal{H}$ to $\mathcal{K}$ and $(\mathcal{D}_{S},S)$ is an operator from $\mathcal{K}$ to $\mathcal{L}$, the \define{composite} $(\mathcal{D}_{ST},ST)$ is an operator from $\mathcal{H}$ to $\mathcal{L}$ with domain $\mathcal{D}_{ST}:=\{x\in\mathcal{D}_{T}\,:\,Tx\in\mathcal{D}_{S}\}$ (note that $\mathcal{D}_{ST}\subseteq\mathcal{D}_{T}$ by definition). 
An operator $(\mathcal{D}_{T},T)$ from $\mathcal{H}$ to $\mathcal{K}$ is \define{bounded on its domain} $\mathcal{D}_{T}$ iff its \define{operator norm} 
\[
\lVert T\rVert:=\sup_{x\in \mathcal{D}_{T}}\big\{\lVert T x\rVert\;:\;\lVert x\rVert =1\big\} 
\]
is finite. Note that there are three different usages of the notation $\lVert\;\cdot\;\rVert$ for norms here---two are for vectors, with one in $\mathcal{H}$ and another in $\mathcal{K}$, while the other usage is for operators. We also define the operator norm similarly for linear transformations between Banach algebras. If an operator $T$ has domain $\mathcal{H}$ and is bounded on its domain, we simply say that $T$ is \define{bounded} or is a \define{bounded operator} (in other words, if a domain is not specified when an operator is bounded, one can assume it is the entire Hilbert space). The set of bounded operators on $\mathcal{H}$ is denoted by $\mathcal{B}(\mathcal{H})$, and it is an associative unital algebra (over $\C$) with multiplication given by function composition. 
An operator $(\mathcal{D}_{T},T)$ from $\mathcal{H}$ to $\mathcal{K}$ is \define{densely defined} iff its domain $\mathcal{D}_{T}$ is dense in $\mathcal{H}$ (in its norm topology).%
\footnote{Given densely defined operators $T$ from $\mathcal{H}$ to $\mathcal{K}$ and $S$ from $\mathcal{K}$ to $\mathcal{L}$, it need not be the case that $ST$ is densely defined. 
As an example (using some terminology that will be explained in subsequent paragraphs in the text), let $\mathcal{H}=\mathcal{K}=\mathcal{L}=L^{2}\big([0,1],\mu\big)$, elements of which are a.e.~equivalence classes of measurable and square-integrable functions with the standard Lebesgue measure. 
Let $f:[0,1]\to\C$ be the function $f(x)=\sqrt{2x}$, which satisfies $\lVert f\rVert=1$. Notice that $f$ is not differentiable at $x=0$, and, although $f$ is differentiable on $(0,1]$, its derivative is not in $L^{2}\big([0,1],\mu\big)$. 
Let $T$ be the operator that gives the orthogonal projection onto the linear span $\mathrm{span}\{f\}$, which is a closed one-dimensional subspace of $\mathcal{H}$. Namely, $Tg=\left(\int_{0}^{1}f(x)g(x)\,d\mu(x)\right)f$ for all $g\in\mathcal{H}$. Note that the domain of $T$ is all of $\mathcal{H}$ by the Cauchy--Schwarz inequality. Next, let $\mathcal{D}_{S}$ be the dense subspace of $L^{2}\big([0,1],\mu\big)$ admitting representative functions that are differentiable and whose derivatives are square-integrable, and let $S$ be the operator that acts by differentiation, i.e., $(Sg)(x)=\frac{dg}{dx}$ for all $g\in\mathcal{D}_{S}$. Then $\mathcal{D}_{ST}=\ker(T)$ since $f$ is not continuously differentiable. Since $T$ is an orthogonal projection, $\ker(T)$ is a closed subspace of $L^{2}\big([0,1],\mu\big)$. Since $\ker(T)$ does not contain $\mathrm{span}\{f\}$, $\mathcal{D}_{ST}=\ker(T)$ is not dense in $\mathcal{H}$, so $ST$ is not densely defined. 
}
An \define{extension} of an operator $(\mathcal{D}_{T},T)$ is an operator $(\mathcal{D}_{S},S)$ such that  $\mathcal{D}_{T}\subseteq\mathcal{D}_{S}$ and the restriction of $S$ to $\mathcal{D}_{T}$ agrees with $T$. The notation $(\mathcal{D}_{T},T)\subseteq(\mathcal{D}_{S},S)$ is used to indicate that $S$ is an extension of $T$, or more concisely $T\subseteq S$ when the domains are clear from context. Meanwhile, if $(\mathcal{D}_{T},T)$ is an operator from $\mathcal{H}$ to $\mathcal{K}$ and $\mathcal{V}\subseteq\mathcal{H}$ is a vector subspace of $\mathcal{H}$, then the notation $T|_{\mathcal{V}}$ is used to denote the restriction of $T$ to $\mathcal{V}\cap\mathcal{D}_{T}$. 

If $\mathcal{S}\subseteq\mathcal{H}$ is a subset (not necessarily a subspace) of a Hilbert space, the set $\mathcal{S}^{\perp}:=\{x\in\mathcal{H}\,:\,\<x,y\>=0\text{ for all }y\in \mathcal{H}\}$ is its \define{orthogonal complement}. Note that $\mathcal{S}^{\perp}$ is always a closed subspace of $\mathcal{H}$ in the norm topology of $\mathcal{H}$ because it is the intersection  $\bigcap_{y\in\mathcal{S}}f_{y}^{-1}(\{0\})$ of closed sets $f_{y}^{-1}(\{0\})$, where $f_{y}:\mathcal{H}\to\C$ defined as $f_{y}(x):=\<x,y\>$ is continuous for all $y$. If $(\mathcal{D}_{T},T)$ is an operator from $\mathcal{H}$ to $\mathcal{K}$, then $\ker(T):=\{x\in\mathcal{D}_{T}\,:\,Tx=0\}$ denotes its \define{kernel}.

An operator $(\mathcal{D}_{T},T)$ from $\mathcal{H}$ to $\mathcal{K}$ is \define{closed} iff the following condition holds: given any sequence $x:\N\to\mathcal{D}_{T}$ that converges to $x_{\infty}\in\mathcal{H}$ and such that $T\circ x:\N\to\mathcal{K}$ converges, then $x_{\infty}\in\mathcal{D}_{T}$ and $T\circ x$ converges to $T(x_{\infty})$.
\footnote{Warning: An operator $(\mathcal{D}_{T},T)$ being closed does not mean that the domain $\mathcal{D}_{T}$ of $T$ is closed in the norm topology of $\mathcal{H}$. This is because $\mathcal{D}_{T}$ need not contain the limits of all convergent sequences. 
One explanation for the terminology comes from the fact that the graph $\Gamma(T):=\{x \oplus T(x)\,:\,x\in\mathcal{D}_{T}\}$ of $T$ is a closed subset of $\mathcal{H}\oplus\mathcal{K}$ (the direct sum Hilbert space) if and only if $T$ is a closed operator. 
}
An operator $(\mathcal{D}_{T},T)$ from $\mathcal{H}$ to $\mathcal{K}$ is \define{closable} iff there exists an extension of $T$ to a closed operator, i.e., there exists an operator $(\mathcal{D}_{S},S)$ from $\mathcal{H}$ to $\mathcal{K}$ such that $T\subseteq S$ and $S$ is closed.%
\footnote{Not every densely defined operator is closable. A simple example can be obtained as follows. First let $\mathcal{H}=L^{2}\big([0,1],\mu\big)$. Then let $\mathcal{D}$ be the subspace of equivalence classes of measurable functions that contain continuous representatives, i.e., elements of $C\big([0,1]\big)$. Let $T$ be the operator from $\mathcal{H}$ to itself, but with domain $\mathcal{D}$, given by the constant function $T(f):=f(0)$, which is defined by choosing a continuous representative of $f$ and evaluating that representative at $0$. The operator $T$ is well-defined on $\mathcal{D}$ because if $f$ and $f'$ are two continuous representatives, then $f-f'$ is continuous and a.e.~equal to $0$, which implies $f-f'=0$ everywhere. Moreover, $\mathcal{D}$ is dense inside $\mathcal{H}$. To see that $T$ is not closable, for each $n\in\N$, let $f_{n}(x):=(1-nx)\chi_{[0,1/n]}(x)$, where $\chi_{E}$ is the indicator function on a measurable subset $E\subseteq[0,1]$, which is $1$ if $x\in E$ and $0$ otherwise. Then, the sequence $\{f_{n}\}$ converges not to $\lim_{n\to\infty}f_{n}=0$ in $\mathcal{H}$. Meanwhile, $T(f_{n})=f_{n}(0)=1$ (the constant function whose value is $1$) for all $n\in\N$ so that $\lim_{n\to\infty}T(f_{n})=1$. Since this is not equal to $T(\lim_{n\to\infty}f_{n})=T(0)=0$, the operator $T$ is not closable.
}
If $T$ is closable, its smallest closed extension is often denoted by $\overline{T}$. 
Given a densely defined operator $T$ from $\mathcal{H}$ to $\mathcal{K}$, its \define{adjoint} is the operator $T^*$ from $\mathcal{K}$ to $\mathcal{H}$ defined as follows~\cite[VIII.1]{ReedSimonI1980}.
\footnote{We do not define the adjoint of an operator $T$ that does not have a dense domain.}
First, the domain of $T^*$ is given by
\[
\begin{split}
\mathcal{D}_{T^*}&:=\big\{z\in\mathcal{K}\,:\,\text{there exists an } x\in\mathcal{H}\text{ such that }\<z,Ty\>=\<x,y\>\text{ for all } y\in\mathcal{D}_{T}\big\}\\
&\;=\big\{z\in\mathcal{K}\,:\, \mathcal{D}_{T}\ni y\mapsto \<z,Ty\> \text{ is bounded on its domain $\mathcal{D}_{T}$}\big\}
\end{split}
\]
(the equivalence between these two definitions follows from the Riesz lemma). Because the operator $T$ is densely defined, the element $x\in\mathcal{H}$ that satisfies the condition $\<x,y\>=\<z,Ty\>$ for all $y\in\mathcal{D}_{T}$ is unique, and is therefore taken as the definition of $T^*z$. In other words, $\<z,Ty\>=\<T^*z,y\>$ for all $y\in\mathcal{D}_{T}$ and $z\in\mathcal{D}_{T^*}$. 
We note that if $T$ happens to be a conjugate-linear operator defined on a dense domain, then its \define{adjoint} $T^*$ is defined similarly except that it satisfies $\<T y,z\>=\<T^* z,y\>$. 
Although the domain $\mathcal{D}_{T^*}$ of $T^{*}$ need not be dense in general (even though the domain of $T$ is dense), this is the case with closable operators. Namely, a densely defined operator $T$ is closable if and only if its adjoint $T^*$ is densely defined.%
\footnote{If we go back to our earlier example of the nonclosable operator $T$ defined on the dense domain $C\big([0,1]\big)$ inside $L^2\big([0,1],\mu\big)$ as evaluation at $0$, then this says the adjoint $T^*$ cannot be densely defined. Indeed, $g\in\mathcal{D}_{T^*}$ whenever the function 
$
C\big([0,1]\big) \ni f \mapsto  \langle g, T(f) \rangle = f(0)\int \overline{g}\,d\mu 
$
is bounded, i.e., whenever there exists a real number $M_{g}\ge0$ such that 
$
\big|f(0)\big| \; \left|\int \overline{g}\,d\mu\right|\le M_{g} \lVert f\rVert_{2} 
$
for all $f\in C\big([0,1]\big)$. In particular, let $\{f_{n}\}$ be the sequence of continuous functions given by $f_{n}(x)=\sqrt{3n}(1-nx)\chi_{[0,1/n]}(x)$. Then $\lim_{n\to\infty}f_{n}(0)=\infty$ and yet $\lVert f_{n}\rVert_{2}=1$. The previous inequality therefore requires $\int g\,d\mu=0$. Hence, 
$
\mathcal{D}_{T^*}
=\left \{g\in L^2\big([0,1],\mu\big): \int g\,d\mu=0\right\}
=\mathrm{span}\{1\}^{\perp}, 
$
where $1$ is the constant function whose value is $1$ everywhere. This shows that $\mathcal{D}_{T^*}$ is a closed subspace of $L^2\big([0,1],\mu\big)$, and is therefore not dense. As for how  $T^*$ acts on $g\in \mathcal{D}_{T^*}$, we note that 
$
0=\overline{f(0)}\int g\,d\mu
=\big\<T(f),g\big\>
=\big\<f,T^*(g)\big\>
=\int \overline{f} \, T^*(g)\,d\mu
$
for all $f\in C\big([0,1]\big)$. This implies $T^*(g)=0$. Thus, $T^*$ acts as the zero operator on $\mathcal{D}_{T^*}$ (compare this with~\cite[VIII.1 Example 4]{ReedSimonI1980}). 
}
In such a case, $T^*=\overline{T}^*$ and $(T^*)^*=\overline{T}$. 
In fact, $T^*$ is always closed. Additionally, when $T\subseteq S$, then $S^*\subseteq T^*$. 
When $T$ is a closed operator defined on a dense domain, then $T^*$ is also a closed operator defined on a dense domain. 
When $T$ is a bounded operator (on all of $\mathcal{H}$), then $T^*$ is also a bounded operator (on all of $\mathcal{K}$). 
If $T$ is a densely defined operator from $\mathcal{H}$ to $\mathcal{K}$ and if $S$ is bounded operator from $\mathcal{K}$ to $\mathcal{L}$, then $\mathcal{D}_{ST}=\mathcal{D}_{T}$ is still dense and $T^* S^*=(ST)^*$.
More generally, if $S$ is a densely defined operator and if $ST$ is also densely defined, then $T^*S^*\subseteq (ST)^*$.
%
\footnote{Warning: If $S$ is only densely defined, it need \emph{not} be true that $T^*S^*$ and $(ST)^*$ are equal since the domains might be different. We can see this using the position and momentum operators from quantum mechanics~\cite{ReedSimonI1980}. Namely, let $\mathcal{H}=\mathcal{K}=\mathcal{L}=L^2(\R,\mu)$. We set the position operator $S$, defined by $(Sf)(x)=x f(x)$, to have domain $\mathcal{D}_{S}=\left\{f \in L^2(\mathbb{R},\mu): \int  x^2 |f(x)|^2  < \infty\right\}$. We set the momentum operator $T$, defined by $(Tf)(x)=-i \frac{df}{dx}$, to have domain $\mathcal{D}_{T}=\left\{ f \in L^2(\mathbb{R},\mu): f \text{ absolutely continuous on }\mathbb{R}, \frac{df}{dx} \text{ exists, and } \lVert\frac{df}{dx}\rVert_{2}<\infty\right\}$. Then both $S$ and $T$ are self-adjoint on these domains. Now, let $f$ be the function defined by $f(x)=\frac{1}{\sqrt{1+x^2}}$. Then $f\in L^2(\R,\mu)$. Moreover, 
$f\notin\mathcal{D}_{TS}=\mathcal{D}_{T^*S^*}$ because $f\notin\mathcal{D}_{S}=\mathcal{D}_{S^*}$ as can be checked by an explicit integral computation. We claim that $f\in\mathcal{D}_{(ST)^*}$, which would prove that $\mathcal{D}_{T^*S^*}\subsetneq \mathcal{D}_{(ST)^*}$. To see this, we must prove that the functional $\mathcal{D}_{ST}\ni h\mapsto \<f,STh\>$ is bounded, i.e., there exists a $C\ge0$ such that $|\<f,STh\>|\le C\lVert h\rVert_{2}$ for all $h\in\mathcal{D}_{ST}$. This follows from integration by parts and the Cauchy--Schwarz inequality because $|\<f,STh\>|=\left|\int_{-\infty}^{\infty}\frac{x}{\sqrt{1+x^2}} \frac{dh}{dx}\,dx\right|=\left|\int_{-\infty}^{\infty}\frac{h(x)}{(1+x^2)^{3/2}}\,dx\right|\le \left(\int_{-\infty}^{\infty}\frac{dx}{(1+x^2)^3}\right)^{1/2} \lVert h\rVert_{2}=\sqrt{2}\,\lVert h\rVert_{2}$. 
}
A densely-defined operator $T$ is \define{self-adjoint} iff $T^*=T$ (including equality of domains). 

A \define{partial isometry} from $\mathcal{H}$ to $\mathcal{K}$ is a bounded operator $V:\mathcal{H}\to\mathcal{K}$ such that $V^* V$ and $VV^*$ are both projections. It is an \define{isometry} iff $V^* V=\id_{\mathcal{H}}$ and \define{unitary} iff, in addition, $VV^*=\id_{\mathcal{K}}$. 
A self-adjoint operator $S$ with (necessarily dense) domain $\mathcal{D}_{S}\subseteq\mathcal{H}$ is \define{positive} iff $\<x,Sx\>\ge0$ for all $x\in\mathcal{D}_{S}$. If $T$ is a densely defined closed operator from $\mathcal{H}$ to $\mathcal{K}$, then $S:=T^*T$ is positive (and in particular self-adjoint and densely defined) and $\overline{T|_{\mathcal{D}_{S}}}=T$. 
Moreover, $S$ admits a positive square root $|T|:=\sqrt{S}$ with domain $\mathcal{D}_{|T|}=\mathcal{D}_{T}$, which means that $|T|$ is a positive operator satisfying $|T|^2=S$. 
Secondly, there exists a partial isometry $V:\mathcal{H}\to\mathcal{K}$ such that $T=V\,|T|$ and $V^*V$ projects onto $\ker(T)^{\perp}$. The decomposition $T=V\,|T|$ is called the \define{polar decomposition} of $T$. The polar decomposition is unique in the sense that if $T=WP$, with $P$ positive and $W$ a partial isometry such that $W^*W$ projects onto $\ker(T)^{\perp}$, then $W=V$ and $P=|T|$. This fact about uniqueness can be found in~\cite[Theorem 7.20]{Weidmann1980} and~\cite[Theorem 7.2]{Schmudgen2012}, for example. 

The $*$-algebra of bounded operators on a Hilbert space $\mathcal{H}$ is denoted by $\mathcal{B}(\mathcal{H})$, where the $*$ operation is given by the adjoint. We first discuss several types of topologies on $\mathcal{B}(\mathcal{H})$, which are used to make sense of continuity and limits~\cite[Section I.3.1]{Dix81}. The \define{norm topology} on $\mathcal{B}(\mathcal{H})$ is the metric space topology generated by the operator norm. Under this norm, $\mathcal{B}(\mathcal{H})$ is a Banach algebra. 
The \define{weak operator topology} on $\mathcal{B}(\mathcal{H})$ is the smallest topology (fewest open sets) such that the assignment 
\[
\mathcal{B}(\mathcal{H})\ni T\mapsto \<x,Ty\>
\]
is continuous for all $x,y\in\mathcal{H}$. 
We can better understand the weak operator topology in terms of convergence of nets, a slight generalization of sequences. A \define{net} in a topological space $(X,\tau)$, where $X$ is the set and $\tau$ its topology of open sets, is a function of the form $T:A\to X$, where $(A,\le)$ is a directed set~\cite{Willard2004}. Such a net \define{converges} to an element $T_{\infty}\in X$ iff for every open set $U\in\tau$ containing $T_{\infty}$, there exists a $\beta\in A$ such that $T_{\alpha}\in U$ for all $\alpha\ge\beta$. 
Thus, if $T:A\to\mathcal{B}(\mathcal{H})$ is a net of operators, then the net converges to $T_{\infty}\in\mathcal{B}(\mathcal{H})$ in the weak operator topology if and only if 
\[
\lim_{a\in A}\<x, T_{\alpha} y\> = \<x,T_{\infty}y\>
\]
for all $x,y\in \mathcal{H}$, and we say that $T$ \define{weakly converges to} $T_{\infty}$. Note that a net $T$ may weakly converge to $T_{\infty}$ even though $\lim_{\alpha\in A}\lVert T_{\infty} - T_{\alpha}\rVert$ need not be zero.%
\footnote{If we let $\ell^2$ be the Hilbert space of square-summable sequences $a:\N\to\C$, then a simple example is the sequence (which is an example of a net) $T:\N\to\mathcal{B}(\ell^2)$ of operators defined by $T_{n}(a):=a_{n}$, evaluation of $a\in\ell^2$ at $n\in\N$. In this case, $\lVert T_{n}\rVert=1$ for all $n\in\N$, $T$ does not converge to any operator in the norm topology, but $T$ does converge weakly to the zero operator.}
Although there are other topologies on $\mathcal{B}(\mathcal{H})$, we will only need to use these two for the vast majority of the paper. 

Given Hilbert spaces $\mathcal{H}$ and $\mathcal{K}$, their algebraic tensor product is denoted by $\mathcal{H}\otimes\mathcal{K}$, while the Hilbert space completion is denoted by $\mathcal{H}\widehat{\otimes}\mathcal{K}$ (this is completion in the norm topology induced by the inner product). Note that the algebraic tensor product is a vector subspace of the completion, i.e., $\mathcal{H}\otimes\mathcal{K}\subseteq\mathcal{H}\widehat{\otimes}\mathcal{K}$. 
As for the associated algebras of bounded operators, let $\mathcal{B}(\mathcal{H})\otimes\mathcal{B}(\mathcal{K})$ denote the algebraic tensor product of $\mathcal{B}(\mathcal{H})$ with $\mathcal{B}(\mathcal{K})$. There is a unique linear injection $\mathcal{B}(\mathcal{H})\otimes\mathcal{B}(\mathcal{K})\to\mathcal{B}(\mathcal{H}\widehat{\otimes}\mathcal{K})$ specified by sending $S\otimes T$ to the operator on $\mathcal{H}\otimes\mathcal{K}$ defined uniquely by its action on elements $x\otimes y$ by $S(x)\otimes T(y)$. This map defines an injective $*$-homomorphism so that $\mathcal{B}(\mathcal{H})\otimes\mathcal{B}(\mathcal{K})$ can be viewed as a subalgebra of $\mathcal{B}(\mathcal{H}\widehat{\otimes}\mathcal{K})$. As such, we will often identify $\mathcal{B}(\mathcal{H})\otimes\mathcal{B}(\mathcal{K})$ with its image inside $\mathcal{B}(\mathcal{H}\widehat{\otimes}\mathcal{K})$. In particular, if $S\in\mathcal{B}(\mathcal{H})$ and $T\in\mathcal{B}(\mathcal{K})$, then $S\otimes T$ will equivalently be viewed as an element of $\mathcal{B}(\mathcal{H}\widehat{\otimes}\mathcal{K})$. 
More generally, if $(T_{j},\mathcal{D}_{T_{j}})$ are operators from $\mathcal{H}_{j}$ to $\mathcal{K}_{j}$, with $j\in\{1,2\}$, note that $\mathcal{D}_{T_{1}}\otimes\mathcal{D}_{T_{2}}$ is a vector subspace inside $\mathcal{H}_{1}\otimes\mathcal{H}_{2}\subseteq \mathcal{H}_{1}\widehat{\otimes}\mathcal{H}_{2}$. We now further assume that $\mathcal{D}_{T_{1}}$ and $\mathcal{D}_{T_{2}}$ are dense inside $\mathcal{H}_{1}$ and $\mathcal{H}_{2}$, respectively. If the \emph{algebraic} tensor product of $T_{1}$ with $T_{2}$ is closable, we define the \define{tensor product} $(T_{1}\otimes T_{2},\mathcal{D}_{T_{1}\otimes T_{2}})$ to be the closure of this algebraic tensor product~\cite[Section 9.33]{StratilaZsido2019}. Note that the algebraic tensor product is guaranteed to be closable if both $(T_{1},\mathcal{D}_{T_1})$ and $(T_{2},\mathcal{D}_{T_2})$ are closable operators~\cite[VIII.10]{ReedSimonI1980} 
(we do not define the tensor product if the algebraic tensor product $T_{1} \otimes T_{2}$ is not closable). 

A \define{von~Neumann algebra} $\Alg{M}$ is a unital%
\footnote{All von~Neumann algebras in this paper will be unital, so we will henceforth omit saying so.}
 $*$-subalgebra of bounded operators $\mathcal{B}(\mathcal{H})$ on some Hilbert space $\mathcal{H}$ that is closed in the weak operator topology (this implies it is closed in the norm topology as well, thereby making it a Banach algebra, and in fact a $C^*$-algebra). 
If $\Alg{M}\subseteq\mathcal{B}(\mathcal{H})$ is a von Neumann algebra, and if $x\in\Alg{M}$, then there exists a decomposition of $x$ into a complex combination of positive elements as
\be
\label{eqn:positivedecomp}
x=\underbrace{\frac{x+x^*}{2}}_{x_{\Re}}+i \underbrace{\frac{x-x^*}{2i}}_{x_{\Im}}=x_{\Re}^{+}-x_{\Re}^{-}+ix_{\Im}^{+}-ix_{\Im}^{-},
\ee
where $x_{\Re}^{+},x_{\Re}^{-},x_{\Im}^{+},x_{\Im}^{-}\in\Alg{M}$ and $x_{\Re}^{+},x_{\Re}^{-},x_{\Im}^{+},x_{\Im}^{-}\ge0$, where $x_{\Re},x_{\Im}\in\Alg{M}$ are self-adjoint. 
If $\Alg{N}\subseteq\mathcal{B}(\mathcal{K})$ is another von~Neumann algebra, the \define{von Neumann algebraic tensor product} of $\Alg{M}$ and $\Alg{N}$, denoted as $\Alg{M}\overline{\otimes}\Alg{N}$, is the weak closure of (meaning, the smallest closed set in the weak operator topology containing) the algebraic tensor product $\Alg{M}\otimes\Alg{N}$ when viewed as a (unital) $*$-subalgebra of $\mathcal{B}(\mathcal{H}\widehat{\otimes}\mathcal{K})$%
\footnote{This closure can also be viewed as the bicommutant of $\Alg{M}\otimes\Alg{N}$, as discussed later in Footnote~\ref{footnote:bicommutant} and the surrounding text.}. 
 We note that the norm closure of $\Alg{M}\otimes\Alg{N}$ is contained in the weak closure $\Alg{M}\overline{\otimes}\Alg{N}$ (and the containment could be strict). This implies that if we have two bounded linear maps $\mathcal{E}_{1}:\Alg{N}_{1}\to\Alg{M}_{1}$ and $\mathcal{E}_{2}:\Alg{N}_{2}\to\Alg{M}_{2}$ between von~Neumann algebras, then their algebraic tensor product is only guaranteed to be uniquely extendable to the norm closure of $\Alg{N}_{1}\otimes\Alg{N}_{2}$, but \emph{not} necessarily to the weak closure $\Alg{N}_{1}\overline{\otimes}\Alg{N}_{2}$. This subtle point will be fixed by the notion of normality, which is a type of continuity stronger than boundedness, and it will be discussed in the next paragraph. 

Let $\Alg{M}$ and $\Alg{N}$ be von~Neumann algebras. A linear map $\mathcal{E}:\Alg{N}\to\Alg{M}$ is \define{positive} iff $\mathcal{E}(y)\ge0$ whenever $y\ge0$, where $y\ge0$ means that $y$ is positive, i.e., there exists a $b\in\Alg{N}$ such that $y=b^*b$. 
A positive map is automatically bounded (continuous in the operator norm topology).  
A linear map $\mathcal{E}:\Alg{N}\to\Alg{M}$ is \define{unital} iff $\mathcal{E}(1_{\Alg{N}})=1_{\Alg{M}}$. 
A linear map $\mathcal{E}:\Alg{N}\to\Alg{M}$ is \define{completely positive} (CP) iff $\id_{\matr_{n}}\otimes\mathcal{E}:\matr_{n}\otimes\Alg{N}\to\matr_{n}\otimes\Alg{M}$ is positive for all $n\in\N$, where $\matr_{n}$ denotes the algebra of $n\times n$ complex-valued matrices, and $\id_{\matr_{n}}$ denotes the identity linear map on $\matr_{n}$. 
In more detail, if $\Alg{N}\subseteq\mathcal{B}(\mathcal{K})$, then we can view $\matr_{n}\otimes \Alg{N}$ as $\matr_{n}(\Alg{N})\subseteq\mathcal{B}(\mathcal{K}^{\oplus n})$, the algebra of $n\times n$ block matrices with entries in $\Alg{N}$, via the direct sum of $\mathcal{K}$ with itself~\cite{Paulsen02}. Note that $\matr_{n}\otimes \Alg{N}\cong \matr_{n}(\Alg{N})$ is already a von Neumann algebra inside of $\mathcal{B}(\C^{n}\otimes\mathcal{K})\cong\mathcal{B}(\mathcal{K}^{\oplus n})$, i.e., no completion of the tensor product is needed because every element is, by definition, (finite) linear combinations of the form 
\be
\label{eqn:yinMnN}
\sum_{j,k=1}^{n} E_{jk}\otimes y_{jk}
=\begin{bmatrix} y_{11} & \cdots & y_{1n} \\
 \vdots & & \vdots \\
 y_{n1} & \cdots & y_{nn} 
\end{bmatrix}
, 
\ee
where $E_{jk}$ is the standard matrix element with $1$ in the $jk$ entry and $0$ everywhere else (written as $|j\>\<k|$ in Dirac notation). 
If we set $y\in \matr_{n}(\Alg{N})$ to be the element given by~\eqref{eqn:yinMnN}, then 
\[
\begin{split}
y^*y&=
\begin{bmatrix}
y_{11}^* & 0 & \cdots & 0 \\
\vdots & \vdots & & \vdots \\
y_{1n}^* & 0 & \cdots & 0 
\end{bmatrix}
\begin{bmatrix}
y_{11} & \cdots & y_{1n} \\
0 & \cdots & 0 \\
\vdots & & \vdots \\
0 & \cdots & 0 
\end{bmatrix}
+ \cdots + 
\begin{bmatrix}
0 & \cdots & 0 & y_{n1}^* \\
\vdots & & \vdots & \vdots \\
0 & \cdots & 0 & y_{nn}^* 
\end{bmatrix}
\begin{bmatrix}
0 & \cdots & 0 \\
\vdots & & \vdots \\
0 & \cdots & 0 \\
y_{n1} & \cdots & y_{nn}
\end{bmatrix} \\
&=
\begin{bmatrix}
y_{11}^* & 0 & \cdots & 0 \\
\vdots & \vdots & & \vdots \\
y_{1n}^* & 0 & \cdots & 0 
\end{bmatrix}
\begin{bmatrix}
y_{11} & \cdots & y_{1n} \\
0 & \cdots & 0 \\
\vdots & & \vdots \\
0 & \cdots & 0 
\end{bmatrix}
+ \cdots +
\begin{bmatrix}
y_{n1}^* & 0 & \cdots & 0 \\
\vdots & \vdots & & \vdots \\
y_{nn}^* & 0 & \cdots & 0 
\end{bmatrix}
\begin{bmatrix}
y_{n1} & \cdots & y_{nn} \\
0 & \cdots & 0 \\
\vdots & & \vdots \\
0 & \cdots & 0 
\end{bmatrix}
\end{split}
\]
Therefore, since every positive element can be written in this way, complete positivity of $\mathcal{E}$ is equivalent to 
\be
\label{eqn:whatCPmeans}
\sum_{j,k=1}^{n} \<v_{j}, \mathcal{E}(y_{j}^* y_{k}) v_{k} \>\ge0
\ee
for all $y_{1},\dots,y_{n}\in\Alg{N}$, all $v_{1},\dots,v_{n}\in\mathcal{K}$, and all $n\in\N$.
A completely positive unital map is abbreviated as a \define{CPU map}. The composite of two CPU maps is also CPU. 
A positive bounded linear functional $\chi:\Alg{M}\to\C$ is called a \define{finite weight}. A \define{state} on $\Alg{M}$ is a positive unital linear functional $\omega:\Alg{M}\to\C$. 
Note that a state $\omega:\Alg{M}\to\C$ is automatically a finite weight, i.e., it is bounded, because $\omega(x^*x)\le\omega\big(\lVert x^*x\rVert 1_{\Alg{M}}\big)=\lVert x\rVert^2 \omega(1_{\Alg{M}})=\lVert x\rVert^2$ for all $x\in\Alg{M}$.
A finite weight $\chi:\Alg{M}\to\C$ is \define{faithful} whenever given some $x\in\Alg{M}$, then $\chi(x^*x)=0$ implies $x=0$.
Besides being bounded, there is an additional assumption on continuity that is often used for linear maps between von~Neumann algebras $\Alg{M}$ and $\Alg{N}$. An \define{increasing net of positive elements} in a von~Neumann algebra $\Alg{N}$ is a net $y:A\to\Alg{N}$ satisfying $y_{\alpha}\ge0$ for all $\alpha\in A$ and $y_{\alpha}\le y_{\beta}$ whenever $\alpha\le\beta$ in $A$. A positive linear map $\mathcal{E}:\Alg{N}\to\Alg{M}$ is \define{normal} whenever $\sup_{\alpha}\mathcal{E}(y_\alpha) =\mathcal{E}(\sup_{\alpha}y_{\alpha})$ for all increasing nets of positive elements $y:A\to\Alg{N}$ such that $\sup \alpha$ exists in $A$. It is immediate from this definition that the composite of two normal maps is normal. 
Although it is true that a normal positive map is weakly continuous, not every weakly continuous positive map is normal. 
A normal completely positive unital map is abbreviated as a \define{NCPU map}. 
Given two NCPU maps $\mathcal{E}_{1}:\Alg{N}_{1}\to\Alg{M}_{1}$ and $\mathcal{E}_{2}:\Alg{N}_{2}\to\Alg{M}_{2}$, their algebraic tensor product extends uniquely to an NCPU map $\mathcal{E}_{1}\otimes\mathcal{E}_{2}:\Alg{M}_{1}\overline{\otimes}\Alg{M}_{2}\to\Alg{N}_{1}\overline{\otimes}\Alg{N}_{2}$~\cite[Proposition 5.13]{Tak1}. 
We note that although the tensor product of two CPU maps does uniquely extend to the \emph{norm completion} of the tensor products of the von Neumann algebras~\cite[Theorem 3.5.3]{BrownOzawa2008}, if either of the constituent CPU maps is not normal, the resulting tensor product will not uniquely extend to the von Neumann algebra tensor product completion. 

Given a finite weight $\chi:\Alg{M}\to\C$ on a von~Neumann algebra $\Alg{M}$, let $\<\;\cdot\;,\;\cdot\;\>_{\chi}$ denote the sesquilinear form on $\Alg{M}$ by $\<x,y\>_{\chi}:=\chi(x^*y)$. Let $N_{\chi}:=\{x\in\Alg{M}\,:\,\chi(x^*x)=0\}$ denote the \define{left nullspace} of $\chi$. It is a left ideal in $\Alg{M}$ in the sense that it is a subalgebra of $\Alg{M}$ that moreover satisfies the condition that $yx\in N_{\chi}$ whenever $x\in N_{\chi}$ and $y\in\Alg{M}$. 
The sesquilinear form $\<\,\cdot\,,\,\cdot\,\>_{\chi}$ descends to the quotient space $\Alg{M}/N_{\chi}$, where it is nondegenerate and hence defines an inner product. Elements of $\Alg{M}/N_{\chi}$ will often be denoted as $[x]$ with $x$ a representative in $\Alg{M}$. In case there are two such finite weights on a given algebra, a subscript such as in $[x]_{\chi}$ will be used to distinguish between the two. 
Let $\mathcal{H}_{\chi}$ be the completion of $\Alg{M}/N_{\chi}$ under this inner product so that $\Alg{M}/N_{\chi}$ is a dense subspace of $\mathcal{H}_{\chi}$. Let $\pi_{\chi}:\Alg{M}\to\mathcal{B}(\mathcal{H}_{\chi})$ be the unital $*$-homomorphism sending $x$ to the left multiplication operator $\pi_{\chi}(x)$, i.e., $\pi_{\chi}(x)[y]:=[xy]$ for all $[y]\in\Alg{M}/N_{\chi}\subseteq\mathcal{H}_{\chi}$, which is indeed a bounded operator on $\mathcal{H}_{\chi}$. It follows that $\big\<[1_{\Alg{M}}],\pi_{\chi}(x)[1_{\Alg{M}}]\big\>_{\chi}=\chi(x)$ for all $x\in\Alg{M}$. The tuple $(\mathcal{H}_{\chi},\pi_{\chi},[1_{\Alg{M}}])$ is called the \define{GNS representation} associated with $(\Alg{M},\chi)$, where GNS stands for Gelfand--Naimark--Segal~\cites{PaGNS,GN43,Se47}. If $\chi$ is, in addition, normal, then the $*$-homomorphism $\pi_{\chi}$ is normal and $\pi_{\chi}(\Alg{M})$ is a von~Neumann subalgebra of $\mathcal{B}(\mathcal{H}_{\chi})$. 

If $S\subseteq\mathcal{B}(\mathcal{H})$ is a subset, the \define{commutant} of $S$ in $\mathcal{B}(\mathcal{H})$ is the set $S'\subseteq\mathcal{B}(\mathcal{H})$ of elements $x\in\mathcal{B}(\mathcal{H})$ that commute with all elements in $S$, i.e., $S':=\{x\in\mathcal{B}(\mathcal{H})\,:\,xy=yx \text{ for all }y\in S\}$.
The bicommutant theorem states that if $\Alg{S}\subseteq\mathcal{B}(\mathcal{H})$ is a (unital) $*$-algebra, then $\Alg{S}''$ is the smallest von~Neumann algebra containing $\Alg{S}$~\cite[Section I.1.1]{Dix81}, and, moreover, $\Alg{S}''$ is equal to the topological closure of $\Alg{S}$ in any one of the weak operator, strong operator, ultra-weak (also called sigma-weak or $\sigma$-weak in the literature~\cite{Tak1}), or ultra-strong topologies~\cite[Section I.3.4]{Dix81}%
\footnote{\label{footnote:bicommutant} Although we only discussed the weak topology here, we state this result because when we reference some results in the literature, those results may be stated using any of these topologies---we stress that it is important that $\Alg{S}$ is a unital $*$-subalgebra of $\mathcal{B}(\mathcal{H})$, and not just a subset.}.
Given two finite weights $\chi,\eta:\Alg{M}\to\C$, the weight $\eta$ is \define{bounded by} the weight $\chi$, written as $\eta\le\chi$, iff $\chi-\eta$ is a positive linear map. More generally, $\eta$ is \define{uniformly bounded by} $\chi$, written as $\eta\le_{u}\chi$, iff there exists a $\lambda>0$ such that $\eta\le\lambda\chi$. In such a case, let $(\mathcal{H}_{\eta},\pi_{\eta},[1_{\Alg{M}}])$ be its GNS representation. Then there exists a unique element $\frac{d\eta}{d\chi}\in\pi_{\chi}(\Alg{M})'\subseteq\mathcal{B}(\mathcal{H}_{\chi})$ such that 
\[
0\le\frac{d\eta}{d\chi}\le\lambda\id_{\mathcal{H}_{\chi}}\;\;\text{ for some $\lambda>0$}\;\;
\quad\text{ and }\quad
\eta(x)=\left\<[1_{\Alg{M}}],\frac{d\eta}{d\chi}\pi_{\chi}(x)[1_{\Alg{M}}]\right\>_{\chi} \;\;\text{for all }x\in\Alg{M}.
\]
This element $\frac{d\eta}{d\chi}$ is called the \define{Radon--Nikodym derivative} of $\eta$ with respect to $\chi$, and it is constructed as follows (see~\cite[Lemma 5.19]{StratilaZsido2019}, \cite[Proposition 2.3]{StratilaModular2020}, or Refs.~\cites{Gh10,BeSt86} for a version more directly related to our construction). 
First note that the identity map $\id_{\Alg{M}}:\Alg{M}\to\Alg{M}$ descends to a (well-defined) bounded linear map $G_{\eta,\chi}:\Alg{M}/N_{\chi}\to\Alg{M}/N_{\eta}$ because 
\[
\lVert x\rVert_{\eta}^2 = \eta(x^* x) \le \lambda \chi(x^* x) = \lambda \lVert x\rVert_{\chi}^2
\]
for all $x\in\Alg{M}$ implies $\lVert G_{\eta,\chi}\rVert\le \lambda.$
Let $G_{\eta,\chi}:\mathcal{H}_{\chi}\to\mathcal{H}_{\eta}$ also denote its unique extension to the completions.  
The bounded operator $\frac{d\eta}{d\chi}:=G_{\eta,\chi}^* G_{\eta,\chi}\in\mathcal{B}(\mathcal{H}_{\chi})$ satisfies all of the required properties. If $\eta_1$ and $\eta_2$ are two finite weights on $\Alg{M}$ with $\eta_1,\eta_2\le_{u}\chi$ and if $c\ge0$, then it immediately follows from uniqueness of the Radon--Nikodym derivative and linearity of $\eta_{1}$ and $\eta_{2}$ that 
\[
\frac{d(\eta_1+c\eta_2)}{d\chi}=\frac{d\eta_1}{d\chi}+c\frac{d\eta_2}{d\chi}.
\] 

Let $\Alg{M}\subseteq\mathcal{B}(\mathcal{H})$ be a von~Neumann subalgebra of $\mathcal{B}(\mathcal{H})$ and let $\Omega\in\mathcal{H}$. The vector $\Omega$ is \define{cyclic} for $\Alg{M}$ iff the set $\Alg{M}\Omega:=\{x\Omega\,:\,x\in\Alg{M}\}$ is dense in $\mathcal{H}$ (i.e., its norm closure is all of $\mathcal{H}$). 
The notation $L^{2}(\Alg{M},\Omega)$ is sometimes used in place of $\mathcal{H}$.
The vector $\Omega$ is \define{separating} for $\Alg{M}$ iff whenever $x\in\Alg{M}$ satisfies $x\Omega=0$, then $x=0$ (equivalently, $\Omega$ is cyclic for the commutant $\Alg{M}'$). In other words, $\Omega$ is cyclic and separating if and only if $x\Omega=0$ for $x\in\Alg{M}$ implies $x=0$ and $x'\Omega=0$ for $x'\in\Alg{M}'$ implies $x'=0$. As an example, if $\omega:\Alg{A}\to\C$ is a faithful normal state on a von~Neumann algebra $\Alg{A}$, then the vector $\Omega=[1_{\Alg{A}}]$ from the GNS representation $(\mathcal{H}_{\omega},\pi_{\omega},[1_{\Alg{A}}])$ is cyclic and separating for $\Alg{M}=\pi_{\omega}(\Alg{A})$ inside $\mathcal{B}(\mathcal{H}_{\omega})$. 

Given a normal faithful state $\omega$ on a von~Neumann algebra $\Alg{A}$, by the previous paragraphs, there always exists a normal faithful representation $\pi$ of $\Alg{A}$ on some Hilbert space $\mathcal{H}$ together with a cyclic and separating normalized vector $\Omega\in\mathcal{H}$, meaning $\lVert\Omega\rVert=1$, such that $\<\Omega,\pi(a)\,\Omega\>=\omega(a)$ for all $a\in\Alg{A}$. 
Given a vector $\Omega\in\mathcal{H}$ that is cyclic and separating for $\Alg{M}:=\pi(\Alg{A})$, define the unique conjugate-linear operator $S_{0}$ on the domain $\mathcal{D}_{S_{0}}=\Alg{M}\Omega$ specified by $S_{0}(x\,\Omega):=x^{*}\Omega$ for all $x\in\Alg{M}$. Note that the domain $\mathcal{D}_{S_{0}}$ is dense in $\mathcal{H}$ by the assumption that $\Omega$ is cyclic. Although the operator $S_{0}$ is not in general bounded,%
\footnote{When we look at the finite-dimensional case, we will see why $S_{0}$ is in general not bounded for infinite-dimensional von~Neumann algebras. 
}
it is defined on a dense domain, it is invertible on its domain since $S_{0}^{-1}=S_{0}$, and it is closable. Its closure $S:=\overline{S_{0}}$ is called the \define{Tomita operator}, and the positive operator $\Delta:=S^*S$ is called the \define{modular operator} associated with $(\Alg{M},\Omega)$. Since $S$ is a closed and invertible conjugate-linear operator defined on a dense domain, it admits a polar decomposition $S=J\Delta^{1/2}$, where $J:\mathcal{H}\to\mathcal{H}$ is a conjugate-linear partial isometry and $\Delta^{1/2}$ is the unique positive square root of $\Delta$, which is an invertible positive operator on $\mathcal{H}$ with dense domain $\mathcal{D}_{\Delta^{1/2}}=\mathcal{D}_{S}$ that contains $\mathcal{D}_{S_{0}}=\Alg{M}\,\Omega$. In fact, $J$ is a conjugate-linear unitary operator satisfying $J\Omega=\Omega$, $J^2=\id_{\mathcal{H}}$, $J^*=J$, and $J\mathcal{M}J=\mathcal{M}'$ (and hence also $J\mathcal{M}'J=\mathcal{M}$). The operator $J$ is called the \define{modular conjugation} associated with $(\Alg{M},\Omega)$. 
We also note that $\Alg{M}\,\Omega\subseteq\mathcal{D}_{S}=\mathcal{D}_{\Delta^{1/2}}\subseteq\mathcal{D}_{\Delta^{1/4}}$, where $\Delta^{1/4}$ is the unique positive square root of $\Delta^{1/2}$. 
Since $\Delta$ is an (invertible) positive self-adjoint operator defined on a dense domain $\mathcal{D}_{\Delta}\subseteq\mathcal{H}$, the functional calculus guarantees that $\Delta^{it}$ defines a unitary operator on $\mathcal{H}$ for all $t\in\R$~\cite[Section 9.15]{StratilaZsido2019}. 
To make sense of $\Delta^{it}$ in a bit more detail, recall that the complex logarithm is defined on the set $\C\setminus(-\infty,0]$ 
by $\ln(z)=\ln|z|+\arg(z)$~\cite{Rudin87}, where if $z=re^{i\theta}$, with $r\in(0,\infty)$ and $-\pi<\theta<\pi$, then $|z|=r$ and $\arg(z)=\theta$. Then define
\[
\big(\C\setminus(-\infty,0) \big)\times \C \ni (\lambda,\alpha)\mapsto z^{\alpha}
:=\begin{cases}
e^{\alpha \ln(z) } &\mbox{ if $z\ne0$ }\\
0 &\mbox{ if $z=0$ }
\end{cases}
\]
In particular, for each $t\in\R$ and $\lambda\in[0,\infty)$, the expression $\lambda^{it}$ is defined, and if $\lambda\in(0,\infty)$, then $|\lambda^{it}|=|e^{it\ln(\lambda)}|=1$ for all $t\in\R$. Thus, since $\Delta$ is a positive operator, it does not contain $0$ in its spectrum, \cite[Corollary 9.13]{StratilaZsido2019} implies that $\Delta^{it}=e^{it \log(\Delta)}$ is unitary for all $t\in\R$. 
For each $t\in\R$, let $\Ad_{\Delta^{it}}$ be the operator on $\mathcal{B}(\mathcal{H})$ defined by $\Delta^{it} T \Delta^{-it}$ for all $T\in\mathcal{B}(\mathcal{H})$. The Tomita--Takesaki theorem states that $\Ad_{\Delta^{it}}(\Alg{M})\subseteq\Alg{M}$ for all $t\in\R$~\cites{Takesaki1970,Sorce2024}. The restriction of $\Ad_{\Delta^{it}}$ to $\Alg{M}$ is denoted by $\sigma_{t}:\Alg{M}\to\Alg{M}$ and is called the \define{modular automorphism group} associated with $(\Alg{M},\Omega)$. The reason for the group terminology is because $\sigma_{s+t}=\sigma_{s}\circ\sigma_{t}$ for all $s,t\in\R$. Since $\pi$ is a $*$-isomorphism from $\Alg{A}$ to $\Alg{M}$, the modular automorphism group can equivalently be viewed as an automorphism on $\Alg{A}$. We will write $\sigma^{\omega}_{t}:\Alg{A}\to\Alg{A}$ and $\sigma^{\Omega}_{t}:\Alg{M}\to\Alg{M}$ if we want to distinguish between the two actions. Finally, an element $x\in\Alg{M}$ is called \define{entire analytic} for the modular automorphism group $\sigma^{\Omega}$ iff there exists a function $f^{\Omega}_{x}:\C\to\Alg{M}$ such that $f^{\Omega}_{x}(t)=\sigma_{t}^{\Omega}(x)$ for all $t\in\R$ and $\big\<v,f^{\Omega}_{x}(z) w\big\>$ is an analytic function in $z\in\C$ (an entire function) for all $v,w\in\mathcal{H}$~\cite[Definition 2.5.20 and Proposition 2.5.22]{BrRo1}. The set of entire analytic elements in $\Alg{M}$ is a (unital) $*$-algebra of $\mathcal{B}(\mathcal{H})$%
\footnote{It is immediate that $1_{\Alg{M}}=\id_{\mathcal{H}}$ is entire analytic, since $f^{\Omega}_{1_{\Alg{M}}}(z)=1_{\Alg{M}}$ for all $z\in\C$. If $x$ is entire analytic, then $f^{\Omega}_{x^*}(t):=f^{\Omega}_{x}(t)^*$ },
 and its weak operator closure is all of $\Alg{M}$~\cite[Proposition 2.5.22]{BrRo1}. 

\bx
\label{ex:fdcaseintro}
This example will be referred to throughout the paper.
We illustrate some of these definitions in the case where $\Alg{A}=\matr_{m}$, $\Alg{M}=\matr_{m}\otimes\mathds{1}_{m}$, $\mathcal{H}=\C^{m}\otimes\C^{m}$, and $\Alg{M}'=\mathds{1}_{m}\otimes\matr_{m}$. The algebra $\Alg{A}$ can be viewed as the algebra of observables associated with Alice, while $\Alg{M}$ has the same interpretation but when viewed inside of a specific larger system associated with the purification of a state on $\Alg{A}$ (which will be introduced shortly). The algebra $\Alg{M}'$ is then associated with the environment in this larger system. Since $\Alg{M}=\matr_{m}\otimes\mathds{1}_{m}$ and $\Alg{M}'=\mathds{1}_{m}\otimes\matr_{m}$, this means that every $x\in\Alg{M}$ is of the form $x=a\otimes\mathds{1}_{m}$ with $a\in\matr_{m}$, and similarly every $x'\in\Alg{M}'$ is of the form $x'=\mathds{1}_{m}\otimes a'$ with $a'\in\matr_{m}$. In what follows, the conjugate transpose (adjoint) of a matrix $a$ will be written as $a^{\dag}$. Vectors $\mathbf{u}$ will be treated as column vectors and their adjoints $\mathbf{u}^{\dag}$ as row vectors. We mention that Dirac notation is often used in the physics literature, so for example, $|\mathbf{u}\>:=\mathbf{u}$ and $\<\mathbf{u}|:=\mathbf{u}^{\dag}$, and $\<\mathbf{u}|\mathbf{v}\>:=\mathbf{u}^{\dag}\mathbf{v}=\<\mathbf{u},\mathbf{v}\>$. Every faithful state on $\matr_{m}$ can be expressed as $\omega(a)=\Tr[\rho a]$ for all $a\in\matr_{m}$ for a unique full rank density operator $\rho$, i.e., $\rho$ is a positive definite matrix such that $\Tr[\rho]=1$. Pick an orthonormal basis $\{\mathbf{u}_{i}\}$ of $\C^{m}$ such that 
\[
\rho=\sum_{i=1}^{m}p_{i}\,\mathbf{u}_{i}\mathbf{u}_{i}^{\dag}\equiv\sum_{i=1}^{m}p_{i}\,|\mathbf{u}_{i}\>\<\mathbf{u}_{i}|
\]
and $\{p_{i}\}$ defines a nowhere vanishing probability distribution on $\{1,\dots,m\}$ (in other words, find an orthonormal diagonalization of $\rho$). Set 
\be
\label{eqn:OmegaFDcase}
\Omega:=\sum_{i=1}^{m}\sqrt{p_{i}}\,\mathbf{u}_{i}\otimes\mathbf{u}_{i}
\equiv\sum_{i=1}^{m}\sqrt{p_{i}}\,|\mathbf{u}_{i}\>\otimes|\mathbf{u}_{i}\>,
\ee
which is a vector in $\mathcal{H}=\C^{m}\otimes\C^{m}$, sometimes called the \emph{canonical purification} (cf.~\cite[Exercise 5.2]{WildeQIT16} or~\cite[Section 6.3]{Holevo2019}), the \emph{thermofield double} (cf.~\cite{ValdiviaMera2025}), or the \emph{thermal vacuum} (cf.~\cite{Nair2015thermofield}). Then one can quickly check that $\omega(a)=\<\Omega,(a\otimes\mathds{1}_{m})\Omega\>$ for all $a\in\matr_{m}$. Moreover, the vector $\Omega$ is cyclic for $\Alg{M}$ because if $\Psi=\sum_{i,j}\psi_{ij}\,\mathbf{u}_{i}\otimes\mathbf{u}_{j}$ is a general vector in $\mathcal{H}$ for some $\psi_{ij}\in\C$ (this is so because $\{\mathbf{u}_{i}\otimes\mathbf{u}_{j}\}$ is an orthonormal basis for $\mathcal{H}$), upon setting $a=\sum_{i,j}\frac{\psi_{ij}}{\sqrt{p_{j}}}\mathbf{u}_{i}\mathbf{u}_{j}^{\dag}$, one sees that 
\[
(a\otimes\mathds{1}_{m})\Omega
=\sum_{i,j,k}\frac{\psi_{ij}}{\sqrt{p_{j}}}\sqrt{p_{k}}(\mathbf{u}_{i}\mathbf{u}_{j}^{\dag}\mathbf{u}_{k}\otimes\mathbf{u}_{k})
=\sum_{i,j,k}\frac{\psi_{ij}\sqrt{p_{k}}}{\sqrt{p_{j}}}\delta_{jk} \mathbf{u}_{i}\otimes\mathbf{u}_{k}
=\sum_{i,j}\psi_{ij}\mathbf{u}_{i}\otimes\mathbf{u}_{j}
=\Psi.
\]
Moreover, $\Omega$ is separating for $\Alg{M}$ because if $x=a\otimes\mathds{1}_{m}$, where $a=\sum_{i,j}a_{ij}\mathbf{u}_{i}\mathbf{u}_{j}^{\dag}$, satisfies $x\,\Omega=0$, then this yields
\[
0=x\,\Omega=\sum_{i,j} a_{ij}\sqrt{p_{j}}\,\mathbf{u}_{i}\otimes\mathbf{u}_{j}
\]
and the only way this is zero is if each $a_{ij}\sqrt{p_j}$ is zero, but since $p_{j}\ne0$, this forces $a_{ij}=0$ for all $i,j,$ so that $x=0$. Hence, $\Omega$ is indeed cyclic and separating for $\Alg{M}$. 

Before moving on to looking at the modular theory for this general setup, we take a brief moment to look at a particular example for qubits. If $m=2$ and $\rho=\frac{\mathds{1}_{2}}{2}=\frac{1}{2}\big(|0\>\<0|+|1\>\<1|\big)$, where we use the notation $|0\>=(1,0)$ and $|1\>=(0,1)$, then $|\Omega\>=\frac{1}{\sqrt{2}}\big(|00\>+|11\>\big)$, where $|jk\>:=|j\>\otimes|k\>$ for $j,k\in\{0,1\}$. Thus, $|\Omega\>$ is one of the four Bell states~\cite{NiCh11}. Another way to see that $|\Omega\>$ is separating is to notice that 
\[
\big(|j\>\<k|\otimes\mathds{1}_{2}\big)|\Omega\>=\frac{1}{\sqrt{2}}|jk\>
\]
for all $j,k\in\{0,1\}$, which forms a basis for $\C^{2}\otimes\C^{2}$. Thus, all of $\C^{2}\otimes\C^{2}$ is obtained by acting on $|\Omega\>$ by $\matr_{2}\otimes\mathds{1}_{2}$ since every $a\in\matr_{2}$ can be written as $a=\sum_{j,k}a_{jk}|j\>\<k|$ for some $a_{jk}\in\C$. 

We now return to the general finite-dimensional case and next investigate the operator $S:\mathcal{H}\to\mathcal{H}$, which is given on vectors of the form $(a\otimes\mathds{1}_{m})\,\Omega$ by 
\[
S\big((a\otimes\mathds{1}_{m})\,\Omega\big):=(a^{\dag}\otimes\mathds{1}_{m})\Omega
=\sum_{i,j} \overline{a_{ji}}\sqrt{p_{j}}\,\mathbf{u}_{i}\otimes\mathbf{u}_{j}
.
\]
We will provide a more explicit formula for $S$ by actually obtaining its polar decomposition first. Let $J:\mathcal{H}\to\mathcal{H}$ be the conjugate-linear operator uniquely defined on an arbitrary vector $\Psi=\sum_{i,j}\psi_{ij}\,\mathbf{u}_{i}\otimes\mathbf{u}_{j}$ by 
\be
\label{eqn:JoperatorFDcase}
J(\Psi):=\sum_{i,j}\overline{\psi_{ji}}\,\mathbf{u}_{i}\otimes\mathbf{u}_{j}.
\ee
In other words, if the $\{\psi_{ij}\}$ coefficients are viewed as the entries of a matrix, then $J$ takes the conjugate-transpose of that matrix. It is immediate from this definition that $J\Omega=\Omega$, $J^2=\id_{\mathcal{H}}$, and $J^*=J$ (in particular, $J$ is a conjugate-linear unitary operator). Moreover, we claim that 
\[
J(a\otimes\mathds{1}_{m})J=\mathds{1}_{m}\otimes\overline{a}, 
\]
where $\overline{a}$ denotes the matrix whose entries in the $\mathbf{u}_{i}$ basis are given by complex conjugation, i.e., 
\[
\overline{a}\equiv\overline{\sum_{k,l}a_{kl}\mathbf{u}_{k}\mathbf{u}_{l}^{\dag}}:=\sum_{k,l}\overline{a_{kl}}\mathbf{u}_{k}\mathbf{u}_{l}^{\dag}.
\]
To see that $J(a\otimes\mathds{1}_{m})J=\mathds{1}_{m}\otimes\overline{a}$, we show that this equality holds when acting on an arbitrary vector $\Psi$:   
\begingroup
\allowdisplaybreaks
\begin{align}
J(a\otimes\mathds{1}_{m})J\Psi&=J \sum_{k,l}\big(a_{kl}\mathbf{u}_{k}\mathbf{u}_{l}^{\dag}\otimes\mathds{1}_{m}\big)\sum_{i,j}\overline{\psi_{ji}}\,\mathbf{u}_{i}\otimes\mathbf{u}_{j} \nonumber \\
&=J\left(\sum_{j,k}\left(\sum_{i}a_{ki}\overline{\psi_{ji}}\right)\mathbf{u}_{k}\otimes\mathbf{u}_{j}\right) \nonumber \\
&=\sum_{i,j,k}\overline{a_{ji}}\psi_{ki}\mathbf{u}_{k}\otimes\mathbf{u}_{j} \nonumber\\
&=\sum_{i,j,k}\overline{a_{kj}}\psi_{ij}\mathbf{u}_{i}\otimes\mathbf{u}_{k}\nonumber\\
&=\sum_{k,l}\big(\mathds{1}_{m}\otimes \overline{a_{kl}}\mathbf{u}_{k}\mathbf{u}_{l}^{\dag}\big)\sum_{i,j}\psi_{ij}\mathbf{u}_{i}\otimes\mathbf{u}_{j} \nonumber\\
&=(\mathds{1}_{m}\otimes\overline{a})\Psi. \nonumber
\end{align}
\endgroup
Now, let $\Delta^{1/2}:\mathcal{H}\to\mathcal{H}$ be defined by its action on a vector $\Psi$ by 
\be
\label{eqn:DeltahalfFDcase}
\Delta^{1/2}(\Psi):=\left(\rho^{\frac{1}{2}}\otimes\rho^{-\frac{1}{2}}\right)\Psi
=\sum_{i,j}\frac{\psi_{ij}\sqrt{p_{i}}}{\sqrt{p_{j}}}\mathbf{u}_{i}\otimes\mathbf{u}_{j}.
\ee
We next check that $S=J\Delta^{1/2}$ indeed provides a polar decomposition of $S$. First, the identity $S=J\Delta^{1/2}$ is verified by acting on a general element in $\mathcal{H}$ of the form $x\,\Omega$ with $x=a\otimes\mathds{1}_{m}$ via 
\[
J\Delta^{1/2}(x\,\Omega)
=J\Delta^{1/2}\sum_{i,j}a_{ij}\sqrt{p_j}\,\mathbf{u}_{i}\otimes\mathbf{u}_{j}
=J\sum_{i,j}a_{ij}\sqrt{p_i}\,\mathbf{u}_{i}\otimes\mathbf{u}_{j}
=\sum_{i,j}\overline{a_{ji}}\sqrt{p_j}\,\mathbf{u}_{i}\otimes\mathbf{u}_{j}
=S(x\,\Omega).
\]
A quick calculation shows that $\Delta^{1/2}$ is a self-adjoint, which means $\<\Delta^{1/2}\Phi,\Psi\>=\<\Phi,\Delta^{1/2}\Psi\>$ for all $\Psi,\Phi\in\mathcal{H}$.
From this, it is immediate that $\Delta^{1/2}$ is a positive operator by the eigenvalue test for positivity since its eigenvalues are precisely of the form $\sqrt{\frac{p_i}{p_{j}}}$ with the corresponding eigenvector being $\mathbf{u}_{i}\otimes\mathbf{u}_{j}$. 

From the finite-dimensional case, we can see why $S$ is in general unbounded for infinite-dimensional von~Neumann algebras. Namely, the singular values of $S$, which are the square-roots of the eigenvalues of $S^*S=\Delta$, are precisely given by $\sqrt{\frac{p_i}{p_{j}}}$. Since density matrices may have arbitrary eigenvalues describing probability distributions, consider an infinite-dimensional density matrix $\rho$ with $k^{\text{th}}$ eigenvalue given by $p_{k}=\frac{6}{\pi^2 k^2}$. Then $\sum_{k=1}^{\infty}p_{k}=1$ so that $\{p_{k}\}$ does indeed define a probability distribution on $\N$. Hence, $\sqrt{\frac{p_i}{p_j}}$ can be arbitrarily large, and therefore $\Delta^{1/2}$ is a positive, but not bounded, operator. 
\ex

Let $\Alg{A}_1$ and $\Alg{A}_2$ be two von~Neumann algebras, let $\omega_1$ and $\omega_2$ be faithful states on $\Alg{A}_1$ and $\Alg{A}_2$, respectively. Let $(\mathcal{H}_{i},\pi_{i},\Omega_{i})$ denote their associated GNS representations and set $\Alg{M}_{i}=\pi_{i}(\Alg{A}_i)$ for $i=1,2$. Then $(\mathcal{H}_{1}\widehat{\otimes}\mathcal{H}_{2},\pi_{1}\otimes\pi_{2},\Omega_{1}\otimes\Omega_{2})$ is a GNS representations of $(\Alg{A}_{1}\overline{\otimes}\Alg{A}_{2},\omega_{1}\otimes\omega_{2})$. Moreover, $(\Alg{M}_{1}\overline{\otimes}\Alg{M}_{2})'=\Alg{M}_{1}'\overline{\otimes}\Alg{M}_{2}'$ inside $\mathcal{B}(\mathcal{H}_{1}\widehat{\otimes}\mathcal{H}_{2})$ and $\Omega_1\otimes\Omega_2$ is cyclic and separating for $\Alg{M}_{1}\overline{\otimes}\Alg{M}_{2}$ inside $\mathcal{B}(\mathcal{H}_{1}\widehat{\otimes}\mathcal{H}_{2})$. If $S_{1}, \Delta_{1}, J_{1},$ and $S_{2},\Delta_{2},J_{2},$ are the Tomita operators, modular operators, and modular conjugations associated with $(\Alg{M}_{1},\Omega_1)$ and $(\Alg{M}_{2},\Omega_2)$, respectively, and if $S_{12}, \Delta_{12}, J_{12}$ are the Tomita operators, modular operators, and modular conjugations associated with $(\Alg{M}_{1}\overline{\otimes}\Alg{M}_{2},\Omega_1\otimes\Omega_2)$, then $S_{12}=S_{1}\otimes S_{2}$ (this is well-defined and closable precisely because each $S_{1}$ and $S_{2}$ are densely-defined and closable)~\cite[Section 10.7]{StratilaZsido2019}. Furthermore, the fact that $\Delta_{12}=\Delta_{1}\otimes\Delta_{2}$ and $J_{12}=J_{1}\otimes J_{2}$ follows from the uniqueness of polar decomposition. Indeed, on the one hand, we have $S_{12}=J_{12} \Delta_{12}^{1/2}$, and on the other hand, we have $S_{12}=J_1 \Delta_1^{\frac{1}{2}} \otimes J_2 \Delta_2^{\frac{1}{2}}=(J_1 \otimes J_2)(\Delta_{1}^{1/2} \otimes \Delta_2^{1/2})$.

\vspace{3mm}
\textbf{AI usage declaration.}
Microsoft 365 Copilot and Google Gemini were used to search for references, to generate BibTex data, to learn background information, for exploratory mathematical assistance, and for proofreading.  
All results generated by this method were checked. 
All of the writing was done solely by the authors.

\addcontentsline{toc}{section}{\numberline{}Bibliography}
\bibliographystyle{eptcs}
\bibliography{references}

\begin{bibdiv}
\begin{biblist}

\bib{AcCe82}{article}{
      author={Accardi, Luigi},
      author={Cecchini, Carlo},
       title={Conditional expectations in von {N}eumann algebras and a theorem
  of {T}akesaki},
        date={1982},
        ISSN={0022-1236},
     journal={J. Funct. Anal.},
      volume={45},
      number={2},
       pages={245\ndash 273},
         url={https://doi.org/10.1016/0022-1236(82)90022-2},
}

\bib{AhmadKlinger2025}{article}{
      author={Ahmad, Shadi~Ali},
      author={Klinger, Marc~S.},
       title={Emergent geometry from quantum probability},
        date={2025May},
     journal={Phys. Rev. D},
      volume={111},
       pages={105015},
      eprint={2411.07288},
         url={https://link.aps.org/doi/10.1103/PhysRevD.111.105015},
}

\bib{AWWW18}{article}{
      author={Alhambra, \'Alvaro~M.},
      author={Wehner, Stephanie},
      author={Wilde, Mark~M.},
      author={Woods, Mischa~P.},
       title={Work and reversibility in quantum thermodynamics},
        date={2018},
     journal={Phys. Rev. A},
      volume={97},
       pages={062114},
      eprint={1506.08145},
         url={https://link.aps.org/doi/10.1103/PhysRevA.97.062114},
}

\bib{Anantharaman2006}{article}{
      author={Anantharaman-Delaroche, Claire},
       title={On ergodic theorems for free group actions on noncommutative
  spaces},
        date={2006},
     journal={Probab. Theory Relat. Fields},
      volume={135},
      number={4},
       pages={520\ndash 546},
      eprint={math/0412253},
         url={https://doi.org/10.1007/s00440-005-0456-1},
}

\bib{AwBuSc21}{article}{
      author={Aw, Clive~Cenxin},
      author={Buscemi, Francesco},
      author={Scarani, Valerio},
       title={Fluctuation theorems with retrodiction rather than reverse
  processes},
        date={2021},
     journal={{AVS} Quantum Science},
      volume={3},
      number={4},
       pages={045601},
      eprint={2106.08589},
         url={https://doi.org/10.1116/5.0060893},
}

\bib{BaJaMa2015}{article}{
      author={Backs, Karl},
      author={Jackson, Steve},
      author={Mauldin, R.~Daniel},
       title={{CH}, $\mathbf{V}=\mathbf{L}$, disintegration of measures, and
  $\boldsymbol{II}^{1}_{1}$ sets},
        date={2015},
        ISSN={0001-8708},
     journal={Adv. Math.},
      volume={274},
       pages={76\ndash 96},
         url={https://doi.org/10.1016/j.aim.2014.12.011},
}

\bib{BaMu94}{book}{
      author={Baez, John},
      author={Muniain, Javier~P.},
       title={Gauge fields, knots and gravity},
      series={Series on Knots and Everything},
   publisher={World Scientific Publishing Co. Pte. Ltd.},
     address={Singapore},
        date={1994},
      volume={4},
         url={https://doi.org/10.1142/2324},
}

\bib{BaiBuscemiScarani2025}{article}{
      author={Bai, Ge},
      author={Buscemi, Francesco},
      author={Scarani, Valerio},
       title={Quantum {B}ayes' rule and {P}etz transpose map from the minimum
  change principle},
        date={2025Aug},
     journal={Phys. Rev. Lett.},
      volume={135},
       pages={090203},
      eprint={2410.00319},
         url={https://link.aps.org/doi/10.1103/5n4p-bxhm},
}

\bib{BaRuSp2007}{article}{
      author={Bartlett, Stephen~D.},
      author={Rudolph, Terry},
      author={Spekkens, Robert~W.},
       title={Reference frames, superselection rules, and quantum information},
        date={2007Apr},
     journal={Rev. Mod. Phys.},
      volume={79},
       pages={555\ndash 609},
      eprint={quant-ph/0610030},
         url={https://link.aps.org/doi/10.1103/RevModPhys.79.555},
}

\bib{Bayes1763}{article}{
      author={Bayes, Thomas},
       title={{LII. An essay towards solving a problem in the doctrine of
  chances. By the late Rev. Mr. Bayes, FRS communicated by Mr. Price, in a
  letter to John Canton, A.M.F.R.S}},
        date={1763},
     journal={Philos. Trans. R. Soc.},
      number={53},
       pages={370\ndash 418},
         url={https://doi.org/10.1098/rstl.1763.0053},
}

\bib{BeSt86}{article}{
      author={Belavkin, Viacheslav~P.},
      author={Staszewski, Przemys{\l}aw},
       title={A {Radon}--{Nikodym} theorem for completely positive maps},
        date={1986},
        ISSN={0034-4877},
     journal={Rep. Math. Phys.},
      volume={24},
      number={1},
       pages={49 \ndash  55},
         url={https://doi.org/10.1016/0034-4877(86)90039-X},
}

\bib{Billingsley1995}{book}{
      author={Billingsley, Patrick},
       title={Probability and measure.},
     edition={3rd ed.},
   publisher={John Wiley \& Sons Ltd.},
     address={Chichester, UK},
        date={1995},
        ISBN={0-471-00710-2},
}

\bib{Bis17}{article}{
      author={Bischoff, Marcel},
       title={Generalized orbifold construction for conformal nets},
        date={2017},
        ISSN={0129-055X,1793-6659},
     journal={Rev. Math. Phys.},
      volume={29},
      number={1},
       pages={1750002, 53},
         url={https://doi.org/10.1142/S0129055X17500027},
}

\bib{BiDeGi2021}{article}{
      author={Bischoff, Marcel},
      author={Del~{V}ecchio, Simone},
      author={Giorgetti, Luca},
       title={Compact hypergroups from discrete subfactors},
        date={2021Jul},
     journal={J. Funct. Anal.},
      volume={281},
       pages={109004},
      eprint={2007.12384},
         url={https://doi.org/10.1016/j.jfa.2021.109004},
}

\bib{BrRo1}{book}{
      author={Bratteli, Ola},
      author={Robinson, Derek~W.},
       title={Operator algebras and quantum statistical mechanics 1},
    subtitle={{$C^\ast$}- and {$W^\ast$}-algebras, symmetry groups,
  decomposition of states},
     edition={2},
      series={Texts and Monographs in Physics},
   publisher={Springer-Verlag, New York},
        date={1987},
        ISBN={0-387-17093-6},
         url={http://dx.doi.org/10.1007/978-3-662-02520-8},
}

\bib{BrRo2}{book}{
      author={Bratteli, Ola},
      author={Robinson, Derek~W.},
       title={Operator algebras and quantum statistical mechanics 2},
    subtitle={Equilibrium states, models in quantum statistical mechanics},
     edition={Second},
      series={Texts and Monographs in Physics},
   publisher={Springer-Verlag, Berlin},
        date={1997},
        ISBN={3-540-61443-5},
         url={http://dx.doi.org/10.1007/978-3-662-03444-6},
}

\bib{BrownOzawa2008}{book}{
      author={Brown, Nathanial~P.},
      author={Ozawa, Narutaka},
       title={{\(C^*\)}-algebras and finite-dimensional approximations},
      series={Grad. Stud. Math.},
   publisher={American Mathematical Society},
     address={Providence, RI},
        date={2008},
      volume={88},
        ISBN={978-0-8218-4381-9},
         url={https://doi.org/10.1090/gsm/088},
}

\bib{CaMa17}{article}{
      author={Carlen, Eric~A.},
      author={Maas, Jan},
       title={Gradient flow and entropy inequalities for quantum {M}arkov
  semigroups with detailed balance},
        date={2017},
        ISSN={0022-1236},
     journal={J. Funct. Anal.},
      volume={273},
      number={5},
       pages={1810\ndash 1869},
      eprint={1609.01254},
         url={https://doi.org/10.1016/j.jfa.2017.05.003},
}

\bib{CaVe20}{article}{
      author={Carlen, Eric~A.},
      author={Vershynina, Anna},
       title={Recovery map stability for the data processing inequality},
        date={2020},
     journal={J. Phys. A},
      volume={53},
      number={3},
       pages={035204},
      eprint={1710.02409},
         url={https://doi.org/10.1088/1751-8121/ab5ab7},
}

\bib{Caticha2021}{article}{
      author={Caticha, Ariel},
       title={Entropy, information, and the updating of probabilities},
        date={2021},
     journal={Entropy},
      volume={23},
      number={7},
       pages={895},
      eprint={2107.04529},
         url={https://doi.org/10.3390/e23070895},
}

\bib{ChanDarwiche2005}{article}{
      author={Chan, Hei},
      author={Darwiche, Adnan},
       title={On the revision of probabilistic beliefs using uncertain
  evidence},
        date={2005},
     journal={Artif. Intell.},
      volume={163},
       pages={67\ndash 90},
         url={https://doi.org/10.1016/j.artint.2004.09.005},
}

\bib{ChJa18}{article}{
      author={Cho, Kenta},
      author={Jacobs, Bart},
       title={Disintegration and {B}ayesian inversion via string diagrams},
        date={2019},
     journal={Math. Struct. Comp. Sci.},
       pages={1\ndash 34},
      eprint={1709.00322},
         url={https://doi.org/10.1017/S0960129518000488},
}

\bib{Choi1974}{article}{
      author={Choi, Man-Duen},
       title={A {S}chwarz inequality for positive linear maps on
  {$C^*$}-algebras},
        date={1974},
     journal={Illinois J. Math.},
      volume={18},
      number={4},
       pages={565\ndash 574},
         url={https://doi.org/10.1215/ijm/1256051007},
}

\bib{CDDG17}{inproceedings}{
      author={Clerc, Florence},
      author={Danos, Vincent},
      author={Dahlqvist, Fredrik},
      author={Garnier, Ilias},
       title={Pointless learning},
        date={2017},
   booktitle={International conference on foundations of software science and
  computation structures},
   publisher={Springer},
       pages={355\ndash 369},
         url={https://doi.org/10.1007/978-3-662-54458-7_21},
}

\bib{Connes74}{article}{
      author={Connes, Alain},
       title={Caract\'erisation des espaces vectoriels ordonn\'es sous-jacents
  aux alg\`ebres de von~{Neumann}},
        date={1974},
     journal={Ann. Inst. Fourier},
      volume={24},
      number={4},
       pages={121\ndash 155},
         url={http://www.numdam.org/articles/10.5802/aif.534/},
}

\bib{Cs91}{article}{
      author={Csisz{\'a}r, Imre},
       title={Why least squares and maximum entropy? an axiomatic approach to
  inference for linear inverse problems},
        date={1991},
     journal={Ann. Statist.},
      volume={19},
      number={4},
       pages={2032\ndash 2066},
}

\bib{CuSt14}{article}{
      author={Culbertson, Jared},
      author={Sturtz, Kirk},
       title={A categorical foundation for {B}ayesian probability},
        date={2014},
     journal={Appl. Categ. Struct.},
      volume={22},
      number={4},
       pages={647\ndash 662},
      eprint={1205.1488},
         url={https://doi.org/10.1007/s10485-013-9324-9},
}

\bib{Davies74}{article}{
      author={Davies, E.~Brian},
       title={{Markovian Master Equations}},
        date={1974},
     journal={Commun. Math. Phys.},
      volume={39},
      number={2},
       pages={91\ndash 110},
         url={https://doi.org/10.1007/BF01608389},
}

\bib{Dix81}{book}{
      author={Dixmier, Jacques},
       title={{von~{N}eumann algebras}},
      series={{North-Holland Mathematical Library}},
   publisher={North-Holland Publishing Co.},
     address={Amsterdam},
        date={1981},
      volume={27},
        ISBN={0-444-86308-7},
}

\bib{FaUm07}{article}{
      author={Fagnola, Franco},
      author={Umanita, Veronica},
       title={Generators of detailed balance quantum {M}arkov semigroups},
        date={2007-09},
        ISSN={1793-6306},
     journal={Infin. Dimens. Anal. Quantum Probab. Relat. Top.},
      volume={10},
      number={03},
       pages={335\ndash 363},
      eprint={0707.2147},
         url={http://dx.doi.org/10.1142/S0219025707002762},
}

\bib{FewsterVerch2020quantum}{article}{
      author={Fewster, Christopher~J.},
      author={Verch, Rainer},
       title={Quantum fields and local measurements},
        date={2020},
     journal={Commun. Math. Phys.},
      volume={378},
      number={2},
       pages={851\ndash 889},
      eprint={1810.06512},
         url={https://doi.org/10.1007/s00220-020-03800-6},
}

\bib{Fillmore96}{book}{
      author={Fillmore, Peter~A.},
       title={A user's guide to operator algebras},
      series={Canadian Mathematical Society Series of Monographs and Advanced
  Texts},
   publisher={John Wiley \& Sons, Inc., New York},
        date={1996},
        ISBN={0-471-31135-9},
        note={A Wiley-Interscience Publication},
}

\bib{FJV15}{article}{
      author={Fitzsimons, Joseph~F.},
      author={Jones, Jonathan~A.},
      author={Vedral, Vlatko},
       title={Quantum correlations which imply causation},
        date={2015},
     journal={Sci. Rep.},
      volume={5},
      number={1},
       pages={18281},
      eprint={1302.2731},
         url={https://doi.org/10.1038/srep18281},
}

\bib{Fo07}{book}{
      author={Folland, Gerald~B.},
       title={Real analysis: Modern techniques and their applications},
     edition={2},
   publisher={Wiley},
        date={2007},
}

\bib{FrV4}{book}{
      author={Fremlin, D.~H.},
       title={Measure theory. {V}ol. 4},
    subtitle={Topological measure spaces. part i, ii, corrected second printing
  of the 2003 original},
   publisher={Torres Fremlin, Colchester},
        date={2006},
        ISBN={0-9538129-4-4},
        note={Updated version (as of 23.3.10) available at
  \url{https://www1.essex.ac.uk/maths/people/fremlin/cont45.htm}},
}

\bib{Fr20}{article}{
      author={Fritz, Tobias},
       title={A synthetic approach to {M}arkov kernels, conditional
  independence and theorems on sufficient statistics},
        date={2020},
     journal={Adv. Math.},
      volume={370},
       pages={107239},
      eprint={1908.07021},
         url={https://doi.org/10.1016/j.aim.2020.107239},
}

\bib{FritzLorenzin2026}{article}{
      author={Fritz, Tobias},
      author={Lorenzin, Antonio},
       title={Categories of abstract and noncommutative measurable spaces},
        date={2026-03},
        ISSN={0001-8708},
     journal={Adv. Math.},
      volume={488},
       pages={110793},
      eprint={2504.13708},
         url={http://dx.doi.org/10.1016/j.aim.2026.110793},
}

\bib{FuPa24a}{misc}{
      author={{Fullwood}, James},
      author={{Parzygnat}, Arthur~J.},
       title={Operator representation of spatiotemporal quantum correlations},
        date={2024},
        note={Available at
  \href{https://arxiv.org/abs/2405.17555}{arXiv:2405.17555}},
}

\bib{FuJa13}{article}{
      author={Furber, Robert},
      author={Jacobs, Bart},
       title={From {K}leisli categories to commutative {$C^*$}-algebras:
  probabilistic {G}elfand duality},
        date={2015},
     journal={Log. Methods Comput. Sci.},
      volume={11},
       pages={1\ndash 28},
      eprint={1303.1115},
         url={https://doi.org/10.1007/978-3-642-40206-7_12},
}

\bib{GaPa18}{incollection}{
      author={Gagn\'e, Nicolas},
      author={Panangaden, Prakash},
       title={A categorical characterization of relative entropy on standard
  {B}orel spaces},
        date={2018},
   booktitle={The {T}hirty-third {C}onference on the {M}athematical
  {F}oundations of {P}rogramming {S}emantics ({MFPS} {XXXIII})},
      series={Electron. Notes Theor. Comput. Sci.},
      volume={336},
   publisher={Elsevier},
     address={Amsterdam},
       pages={135\ndash 153},
         url={https://doi.org/10.1016/j.entcs.2018.03.020},
}

\bib{GaoWilde2021}{article}{
      author={Gao, Li},
      author={Wilde, Mark~M.},
       title={Recoverability for optimized quantum $f$-divergences},
        date={2021},
     journal={J. Phys. A: Math. Theor.},
      volume={54},
      number={38},
       pages={385302},
      eprint={2008.01668},
         url={http://dx.doi.org/10.1088/1751-8121/ac1dc2},
}

\bib{GN43}{article}{
      author={Gelfand, Israel},
      author={Neumark, Mark},
       title={On the imbedding of normed rings into the ring of operators in
  {H}ilbert space},
        date={1943},
     journal={Rec. Math. [Mat. Sbornik] N.S.},
      volume={12(54)},
       pages={197\ndash 213},
}

\bib{Gh10}{article}{
      author={Gheondea, Aurelian},
       title={The three equivalent forms of completely positive maps on
  matrices},
        date={2010},
     journal={Ann. Univ. Bucharest (Math. Ser.)},
      volume={1},
      number={LIX},
       pages={79\ndash 98},
        note={Accessed from
  \href{https://repository.bilkent.edu.tr/bitstreams/46f7ed8f-1278-46aa-a652-f2911e495a2a/download}{https://repository.bilkent.edu.tr/bitstreams/46f7ed8f-1278-46aa-a652-f2911e495a2a/download}
  on 2026-08-19},
}

\bib{GPRR21}{article}{
      author={{Giorgetti}, Luca},
      author={{Parzygnat}, Arthur~J.},
      author={{Ranallo}, Alessio},
      author={{Russo}, Benjamin~P.},
       title={Bayesian inversion and the {T}omita--{T}akesaki modular group},
        date={202303},
     journal={Q. J. Math.},
      volume={74},
      number={3},
       pages={975\ndash 1014},
      eprint={2112.03129},
         url={https://doi.org/10.1093/qmath/haad014},
}

\bib{Ha1975}{article}{
      author={Haagerup, Uffe},
       title={The standard form of von {N}eumann algebras},
        date={1975},
        ISSN={0025-5521,1903-1807},
     journal={Math. Scand.},
      volume={37},
      number={2},
       pages={271\ndash 283},
         url={https://doi-org.shsu.idm.oclc.org/10.7146/math.scand.a-11606},
}

\bib{HaMu2011}{article}{
      author={Haagerup, Uffe},
      author={Musat, Magdalena},
       title={Factorization and dilation problems for completely positive maps
  on von~{N}eumann algebras},
        date={2011},
        ISSN={0010-3616,1432-0916},
     journal={Comm. Math. Phys.},
      volume={303},
      number={2},
       pages={555\ndash 594},
      eprint={1009.0778},
         url={https://doi.org//10.1007/s00220-011-1216-y},
}

\bib{Ha13}{book}{
      author={Hall, Brian~C.},
       title={Lie groups, {L}ie algebras, and representations},
    subtitle={An introduction},
   publisher={Springer Cham},
        date={2013},
         url={https://doi.org/10.1007/978-3-319-13467-3},
}

\bib{HallQuantum13}{book}{
      author={Hall, Brian~C.},
       title={Quantum theory for mathematicians},
      series={Graduate Texts in Mathematics},
   publisher={Springer, New York},
        date={2013},
      volume={267},
        ISBN={978-1-4614-7115-8; 978-1-4614-7116-5},
         url={https://doi.org/10.1007/978-1-4614-7116-5},
}

\bib{Ha76}{article}{
      author={Hawking, S.~W.},
       title={Breakdown of predictability in gravitational collapse},
        date={1976},
     journal={Phys. Rev. D},
      volume={14},
       pages={2460\ndash 2473},
         url={https://link.aps.org/doi/10.1103/PhysRevD.14.2460},
}

\bib{Holevo2019}{book}{
      author={Holevo, Alexander~S.},
       title={Quantum systems, channels, information},
    subtitle={A mathematical introduction},
     edition={2nd revised and extended edition},
      series={Texts Monogr. Theor. Phys.},
   publisher={De Gruyter},
     address={Berlin, Boston},
        date={2019},
        ISBN={978-3-11-064224-7; 978-3-11-064249-0},
         url={https://doi.org/10.1515/9783110642490},
}

\bib{Ja19}{article}{
      author={Jacobs, Bart},
       title={The mathematics of changing one's mind, via {J}effrey's or via
  {P}earl's update rule},
        date={2019},
     journal={J. Artificial Intelligence Res.},
      volume={65},
       pages={783\ndash 806},
      eprint={1807.05609},
}

\bib{Jacobs2020}{article}{
      author={Jacobs, Bart},
       title={A channel-based perspective on conjugate priors},
        date={2020},
     journal={Math. Struct. Comput. Sci.},
      volume={30},
      number={1},
       pages={44\ndash 61},
      eprint={1707.00269},
}

\bib{Je90}{book}{
      author={Jeffrey, Richard~C.},
       title={The logic of decision},
     edition={2},
   publisher={University of Chicago Press},
        date={1990},
}

\bib{JSK23}{article}{
      author={Jia, Zhian},
      author={Song, Minjeong},
      author={Kaszlikowski, Dagomir},
       title={Quantum space-time marginal problem: global causal structure from
  local causal information},
        date={2023-12},
        ISSN={1367-2630},
     journal={New J. Phys.},
      volume={25},
      number={12},
       pages={123038},
      eprint={2303.12819},
         url={http://dx.doi.org/10.1088/1367-2630/ad1416},
}

\bib{JRSWW16}{inproceedings}{
      author={Junge, Marius},
      author={Renner, Renato},
      author={Sutter, David},
      author={Wilde, Mark~M.},
      author={Winter, Andreas},
       title={Universal recoverability in quantum information},
        date={2016},
   booktitle={{2016 IEEE International Symposium on Information Theory
  (ISIT)}},
       pages={2494\ndash 2498},
}

\bib{JRSWW18}{article}{
      author={Junge, Marius},
      author={Renner, Renato},
      author={Sutter, David},
      author={Wilde, Mark~M.},
      author={Winter, Andreas},
       title={Universal recovery maps and approximate sufficiency of quantum
  relative entropy},
        date={2018},
        ISSN={1424-0661},
     journal={Ann. Henri Poincar\'e},
      volume={19},
      number={10},
       pages={2955\ndash 2978},
      eprint={1509.07127},
         url={http://dx.doi.org/10.1007/s00023-018-0716-0},
}

\bib{Ka21}{book}{
      author={Kallenberg, Olav},
       title={Foundations of modern probability},
     edition={3},
   publisher={Springer},
     address={Cham, CH},
        date={2021},
         url={https://doi.org/10.1007/978-3-030-61871-1},
}

\bib{Ka18}{thesis}{
      author={{Karvonen}, Martti},
       title={{The Way of the Dagger}},
        type={Ph.D. Thesis},
        date={2018},
        note={The University of Edinburgh. Available at
  \href{https://arxiv.org/abs/1904.10805}{arXiv:1904.10805 [math.CT]}},
}

\bib{KnillLaflamme1997}{article}{
      author={Knill, Emanuel},
      author={Laflamme, Raymond},
       title={Theory of quantum error-correcting codes},
        date={1997Feb},
     journal={Phys. Rev. A},
      volume={55},
       pages={900\ndash 911},
      eprint={quant-ph/9604034},
         url={https://link.aps.org/doi/10.1103/PhysRevA.55.900},
}

\bib{Kubo1957}{article}{
      author={Kubo, Ryogo},
       title={Statistical-mechanical theory of irreversible processes. {I}.
  {G}eneral theory and simple applications to magnetic and conduction
  problems},
        date={1957},
     journal={J. Phys. Soc. Jpn.},
      volume={12},
      number={6},
       pages={570\ndash 586},
         url={https://doi.org/10.1143/JPSJ.12.570},
}

\bib{Lee2013Manifolds}{book}{
      author={Lee, John~M.},
       title={Introduction to smooth manifolds},
     edition={2nd revised ed},
      series={Grad. Texts Math.},
   publisher={Springer},
        date={2013},
      volume={218},
        ISBN={978-1-4419-9981-8; 978-1-4419-9982-5},
         url={https://doi.org/10.1007/978-1-4419-9982-5},
}

\bib{LeRu99}{article}{
      author={Lesniewski, Andrew},
      author={Ruskai, Mary~Beth},
       title={Monotone {R}iemannian metrics and relative entropy on
  noncommutative probability spaces},
        date={1999-11},
        ISSN={1089-7658},
     journal={J. Math. Phys.},
      volume={40},
      number={11},
       pages={5702\ndash 5724},
      eprint={math-ph/9808016},
         url={http://dx.doi.org/10.1063/1.533053},
}

\bib{LiWi18}{article}{
      author={Li, Ke},
      author={Winter, Andreas},
       title={{Squashed Entanglement, k-Extendibility, Quantum Markov Chains,
  and Recovery Maps}},
        date={2018},
     journal={Found. Phys.},
      volume={48},
      number={8},
       pages={910\ndash 924},
      eprint={1410.4184},
}

\bib{liu2023quantum}{article}{
      author={Liu, Xiangjing},
      author={Qiu, Yixian},
      author={Dahlsten, Oscar},
      author={Vedral, Vlatko},
       title={Quantum causal inference with extremely light touch},
        date={2025-03},
        ISSN={2056-6387},
     journal={npj Quantum Inf.},
      volume={11},
       pages={54},
      eprint={2303.10544},
         url={http://dx.doi.org/10.1038/s41534-024-00956-0},
}

\bib{LKJR15}{article}{
      author={Lostaglio, Matteo},
      author={Korzekwa, Kamil},
      author={Jennings, David},
      author={Rudolph, Terry},
       title={Quantum coherence, time-translation symmetry, and
  thermodynamics},
        date={2015Apr},
     journal={Phys. Rev. X},
      volume={5},
       pages={021001},
      eprint={1410.4572},
         url={https://link.aps.org/doi/10.1103/PhysRevX.5.021001},
}

\bib{Ma98}{book}{
      author={Mac~Lane, Saunders},
       title={Categories for the working mathematician},
     edition={Second ed.},
      series={Graduate Texts in Mathematics},
   publisher={Springer-Verlag, New York},
        date={1998},
      volume={5},
        ISBN={0-387-98403-8},
}

\bib{MartinSchwingerI1959}{article}{
      author={Martin, Paul~C.},
      author={Schwinger, Julian},
       title={Theory of many-particle systems. {I}},
        date={1959Sep},
     journal={Phys. Rev.},
      volume={115},
       pages={1342\ndash 1373},
         url={https://link.aps.org/doi/10.1103/PhysRev.115.1342},
}

\bib{Ma12}{thesis}{
      author={Marvian, Iman},
       title={Symmetry, asymmetry and quantum information},
        type={Ph.D. Thesis},
   publisher={University of Waterloo},
     address={UWSpace},
        date={2012},
         url={http://hdl.handle.net/10012/7088},
        note={Accessed from
  \href{http://hdl.handle.net/10012/7088}{http://hdl.handle.net/10012/7088} on
  2026-08-19},
}

\bib{Marvian2020coherence}{article}{
      author={Marvian, Iman},
       title={Coherence distillation machines are impossible in quantum
  thermodynamics},
        date={2020Jan},
     journal={Nat. Commun.},
      volume={11},
       pages={25},
      eprint={1805.01989},
         url={https://doi.org/10.1038/s41467-019-13846-3},
}

\bib{MarvianSpekkens2014}{article}{
      author={Marvian, Iman},
      author={Spekkens, Robert~W.},
       title={Modes of asymmetry: The application of harmonic analysis to
  symmetric quantum dynamics and quantum reference frames},
        date={2014Dec},
     journal={Phys. Rev. A},
      volume={90},
       pages={062110},
      eprint={1312.0680},
         url={https://link.aps.org/doi/10.1103/PhysRevA.90.062110},
}

\bib{MilnorTopDiff1997}{book}{
      author={Milnor, John~W.},
       title={Topology from the differentiable viewpoint},
      series={Princeton Landmarks in Mathematics and Physics},
   publisher={Princeton University Press},
     address={Princeton, NJ},
        date={1997},
        ISBN={0-691-04833-9},
        note={Based on notes by David W.\ Weaver; Revised reprint of the 1965
  original},
}

\bib{Mo54}{article}{
      author={Moy, Shu-Teh~Chen},
       title={Characterizations of conditional expectation as a transformation
  on function spaces},
        date={1954},
     journal={Pac. J. Math.},
      volume={4},
      number={1},
       pages={47\ndash 63},
}

\bib{Munkres2000}{book}{
      author={Munkres, James~R.},
       title={Topology: a first course},
     edition={2},
   publisher={Prentice-Hall, Inc.},
     address={Saddle River, NJ},
        date={2000},
         url={https://api.semanticscholar.org/CorpusID:118005688},
}

\bib{Nair2015thermofield}{article}{
      author={Nair, V.~Parameswaran},
       title={Thermofield dynamics and gravity},
        date={2015Nov},
     journal={Phys. Rev. D},
      volume={92},
       pages={104009},
      eprint={1508.00171},
         url={https://link.aps.org/doi/10.1103/PhysRevD.92.104009},
}

\bib{Nair2020}{article}{
      author={Nair, V.~Parameswaran},
       title={Entanglement for quantum {H}all states and a generalized
  {C}hern-{S}imons form},
        date={2020Jun},
     journal={Phys. Rev. D},
      volume={101},
       pages={125021},
      eprint={2001.04957},
         url={https://link.aps.org/doi/10.1103/PhysRevD.101.125021},
}

\bib{NiCh11}{book}{
      author={Nielsen, Michael~A.},
      author={Chuang, Isaac~L.},
       title={Quantum computation and quantum information},
     edition={10th Anniversary},
   publisher={Cambridge University Press, Cambridge},
        date={2011},
        ISBN={0-521-63235-8; 0-521-63503-9},
         url={https://doi.org/10.1017/CBO9780511976667},
}

\bib{OhPe93}{book}{
      author={Ohya, Masanori},
      author={Petz, D\'enes},
       title={Quantum entropy and its use},
      series={Texts and Monographs in Physics},
   publisher={Springer-Verlag},
     address={Berlin, DE},
        date={1993},
        ISBN={3-540-54881-5},
         url={https://doi.org/10.1007/978-3-642-57997-4},
}

\bib{Page1993IBHR}{article}{
      author={Page, Don~N.},
       title={Information in black hole radiation},
        date={1993Dec},
     journal={Phys. Rev. Lett.},
      volume={71},
       pages={3743\ndash 3746},
      eprint={hep-th/9306083},
         url={https://link.aps.org/doi/10.1103/PhysRevLett.71.3743},
}

\bib{Pa17}{misc}{
      author={Parzygnat, Arthur~J.},
       title={Discrete probabilistic and algebraic dynamics: a stochastic
  commutative {G}elfand-{N}aimark theorem},
        date={2017},
        note={Available at
  \href{https://arxiv.org/abs/1708.00091}{arxiv:1708.00091}},
}

\bib{PaGNS}{article}{
      author={Parzygnat, Arthur~J.},
       title={From observables and states to {H}ilbert space and back: A
  2-categorical adjunction},
        date={2018},
     journal={Appl. Categ. Structures},
      volume={26},
       pages={1123\ndash 1157},
      eprint={1609.08975},
}

\bib{PaBayes}{misc}{
      author={Parzygnat, Arthur~J.},
       title={Inverses, disintegrations, and {B}ayesian inversion in quantum
  {M}arkov categories},
        date={2020},
        note={Available at
  \href{https://arxiv.org/abs/2001.08375}{arXiv:2001.08375}},
}

\bib{Pa24}{misc}{
      author={Parzygnat, Arthur~J.},
       title={Reversing information flow: retrodiction in semicartesian
  categories},
        date={2024},
        note={Available at
  \href{https://arxiv.org/abs/2401.17447}{arXiv:2401.17447}},
}

\bib{PaBu22}{article}{
      author={{Parzygnat}, Arthur~J.},
      author={{Buscemi}, Francesco},
       title={Axioms for retrodiction: achieving time-reversal symmetry with a
  prior},
        date={2023},
        ISSN={2521-327X},
     journal={{Quantum}},
      volume={7},
       pages={1013},
      eprint={2210.13531},
         url={https://doi.org/10.22331/q-2023-05-23-1013},
}

\bib{FuPa22a}{article}{
      author={Parzygnat, Arthur~J.},
      author={Fullwood, James},
       title={{From time-reversal symmetry to quantum Bayes' rules}},
        date={2023Jun},
     journal={PRX Quantum},
      volume={4},
       pages={020334},
      eprint={2212.08088},
         url={https://link.aps.org/doi/10.1103/PRXQuantum.4.020334},
}

\bib{PaFu24TSC}{article}{
      author={Parzygnat, Arthur~J.},
      author={Fullwood, James},
       title={Time-symmetric correlations for open quantum systems},
        date={2025},
     journal={Ann. Phys.},
      volume={537},
       pages={e00221},
      eprint={2407.11123},
         url={https://doi.org/10.1002/andp.202500221},
}

\bib{PaRuBayes}{article}{
      author={Parzygnat, Arthur~J.},
      author={Russo, Benjamin~P.},
       title={A non-commutative {B}ayes' theorem},
        date={2022},
        ISSN={0024-3795},
     journal={Linear Algebra Its Appl.},
      volume={644},
       pages={28\ndash 94},
      eprint={2005.03886},
  url={https://www.sciencedirect.com/science/article/pii/S0024379522000805},
}

\bib{PaRu19}{article}{
      author={Parzygnat, Arthur~J.},
      author={Russo, Benjamin~P.},
       title={Non-commutative disintegrations: existence and uniqueness in
  finite dimensions},
        date={2023},
     journal={J. Noncommut. Geom.},
      volume={17},
       pages={899\ndash 955},
      eprint={1907.09689},
         url={https://doi.org/10.4171/JNCG/493},
}

\bib{Paulsen02}{book}{
      author={Paulsen, Vern},
       title={Completely bounded maps and operator algebras},
      series={Cambridge Studies in Advanced Mathematics},
   publisher={Cambridge University Press, Cambridge},
        date={2002},
      volume={78},
        ISBN={0-521-81669-6},
         url={https://doi.org/10.1017/CBO9780511546631},
}

\bib{Pavlov2022}{article}{
      author={Pavlov, Dmitri},
       title={Gelfand-type duality for commutative von~{N}eumann algebras},
        date={2022-04},
        ISSN={0022-4049},
     journal={J. Pure Appl. Algebra},
      volume={226},
      number={4},
       pages={106884},
      eprint={2005.05284},
         url={http://dx.doi.org/10.1016/j.jpaa.2021.106884},
}

\bib{Pearl88}{book}{
      author={Pearl, Judea},
       title={Probabilistic reasoning in intelligent systems: Networks of
  plausible inference},
   publisher={Elsevier},
        date={1988},
         url={https://doi.org/10.1016/C2009-0-27609-4},
}

\bib{PeresTerno2004}{article}{
      author={Peres, Asher},
      author={Terno, Daniel~R.},
       title={Quantum information and relativity theory},
        date={2004Jan},
     journal={Rev. Mod. Phys.},
      volume={76},
       pages={93\ndash 123},
      eprint={quant-ph/0212023},
         url={https://link.aps.org/doi/10.1103/RevModPhys.76.93},
}

\bib{Pe84}{article}{
      author={Petz, D{\'e}nes},
       title={A dual in von~{N}eumann algebras with weights},
        date={1984},
     journal={Q. J. Math.},
      volume={35},
      number={4},
       pages={475\ndash 483},
         url={https://doi.org/10.1093/qmath/35.4.475},
}

\bib{Petz96}{article}{
      author={Petz, D{\'e}nes},
       title={Monotone metrics on matrix spaces},
        date={1996},
        ISSN={0024-3795},
     journal={Linear Algebra Its Appl.},
      volume={244},
       pages={81\ndash 96},
         url={https://doi.org/10.1016/0024-3795(94)00211-8},
}

\bib{PetzGhinea2011}{inproceedings}{
      author={Petz, D{\'e}nes},
      author={Ghinea, Catalin},
       title={Introduction to quantum {F}isher information},
        date={2011},
   booktitle={Quantum probability and related topics},
   publisher={World Scientific},
       pages={261\ndash 281},
         url={https://doi.org/10.1142/9789814338745_0015},
}

\bib{ReedSimonI1980}{book}{
      author={Reed, Michael},
      author={Simon, Barry},
       title={Methods of modern mathematical physics {I}: Functional analysis},
     edition={Revised and Enlarged Edition},
   publisher={Academic Press},
        date={1980},
         url={https://doi.org/10.1016/B978-0-12-585001-8.X5001-6},
}

\bib{RFZ10}{article}{
      author={{Roga}, Wojciech},
      author={{Fannes}, Mark},
      author={{{\.Z}yczkowski}, Karol},
       title={{Davies maps for qubits and qutrits}},
        date={2010},
     journal={Rep. Math. Phys.},
      volume={66},
      number={3},
       pages={311\ndash 329},
      eprint={0911.5607},
}

\bib{Rokhlin1949}{article}{
      author={Rokhlin, Vladimir~Abramovich},
       title={On the fundamental ideas of measure theory},
        date={1949},
     journal={Mat. Sb. (N.S.)},
      volume={25(67)},
      number={1},
       pages={107\ndash 150},
}

\bib{Rudin76}{book}{
      author={Rudin, Walter},
       title={Principles of mathematical analysis},
     edition={3},
   publisher={McGraw-Hill},
     address={New York, NY},
        date={1976},
}

\bib{Rudin87}{book}{
      author={Rudin, Walter},
       title={Real and complex analysis},
     edition={Third ed.},
   publisher={McGraw-Hill},
     address={New York, NY},
        date={1987},
        ISBN={0-07-054234-1},
}

\bib{Rudin1991}{book}{
      author={Rudin, Walter},
       title={Functional analysis},
     edition={2nd ed.},
   publisher={McGraw-Hill},
     address={New York, NY},
        date={1991},
        ISBN={0-07-054236-8},
}

\bib{RyuTakayanagi2006}{article}{
      author={Ryu, Shinsei},
      author={Takayanagi, Tadashi},
       title={Holographic derivation of entanglement entropy from the anti--de
  {S}itter space/conformal field theory correspondence},
        date={2006May},
     journal={Phys. Rev. Lett.},
      volume={96},
       pages={181602},
      eprint={hep-th/0603001},
         url={https://link.aps.org/doi/10.1103/PhysRevLett.96.181602},
}

\bib{Sakurai20}{book}{
      author={Sakurai, J.~J.},
      author={Napolitano, Jim},
       title={Modern quantum mechanics},
     edition={3},
   publisher={Cambridge University Press},
        date={2020},
         url={https://doi.org/10.1017/9781108587280},
}

\bib{SASDS23}{article}{
      author={Scandi, Matteo},
      author={Abiuso, Paolo},
      author={Surace, Jacopo},
      author={De~Santis, Dario},
       title={Quantum {F}isher information and its dynamical nature},
        date={2025jul},
     journal={Rep. Prog. Phys.},
      volume={88},
      number={7},
       pages={076001},
      eprint={2304.14984},
         url={https://dx.doi.org/10.1088/1361-6633/ade453},
}

\bib{Schechter2002}{book}{
      author={Schechter, Martin},
       title={Operator methods in quantum mechanics},
     edition={Reprint of the 1981 original},
   publisher={Dover Publications, Mineola, NY},
        date={2002},
        ISBN={0-486-42547-9},
}

\bib{Schervish1995}{book}{
      author={Schervish, Mark~J.},
       title={Theory of statistics},
      series={Springer Series in Statistics},
   publisher={Springer},
     address={New York, NY},
        date={1995},
        ISBN={978-0-387-94546-0},
         url={https://doi.org/10.1007/978-1-4612-4250-5},
}

\bib{Schmudgen2012}{book}{
      author={Schm{\"u}dgen, Konrad},
       title={Unbounded self-adjoint operators on {H}ilbert space},
      series={Graduate Texts in Mathematics},
   publisher={Springer Dordrecht},
        date={2012},
         url={https://doi.org/10.1007/978-94-007-4753-1},
}

\bib{Schumacher1995}{article}{
      author={Schumacher, Benjamin},
       title={Quantum coding},
        date={1995Apr},
     journal={Phys. Rev. A},
      volume={51},
       pages={2738\ndash 2747},
         url={https://link.aps.org/doi/10.1103/PhysRevA.51.2738},
}

\bib{Se47}{article}{
      author={Segal, Irving~E.},
       title={Irreducible representations of operator algebras},
        date={1947},
        ISSN={0002-9904},
     journal={Bull. Amer. Math. Soc.},
      volume={53},
       pages={73\ndash 88},
         url={https://doi.org/10.1090/S0002-9904-1947-08742-5},
}

\bib{Se07}{inproceedings}{
      author={Selinger, Peter},
       title={Dagger compact closed categories and completely positive maps:
  (extended abstract)},
        date={2007},
   booktitle={Proceedings of the 3rd international workshop on quantum
  programming languages ({QPL} 2005)},
      volume={170},
       pages={139\ndash 163},
  url={http://www.sciencedirect.com/science/article/pii/S1571066107000606},
}

\bib{Sengupta2006}{misc}{
      author={Sengupta, Ambar},
       title={Lecture 6: The {D}ynkin $\pi$-$\lambda$ theorem},
        date={2006},
         url={https://www.math.lsu.edu/~sengupta/7360f09/DynkinPiLambda.pdf},
        note={Available at
  \url{https://www.math.lsu.edu/~sengupta/7360f09/DynkinPiLambda.pdf}. Last
  accessed on 2026-07-06},
}

\bib{ShoreJohnson1980}{article}{
      author={Shore, John~E.},
      author={Johnson, Rodney~W.},
       title={Axiomatic derivation of the principle of maximum entropy and the
  principle of minimum cross-entropy},
        date={1980},
     journal={IEEE Trans. Inf. Theory},
      volume={26},
      number={1},
       pages={26\ndash 37},
         url={https://doi.org/10.1109/TIT.1980.1056144},
}

\bib{SNREG23}{article}{
      author={Song, Minjeong},
      author={Narasimhachar, Varun},
      author={Regula, Bartosz},
      author={Elliott, Thomas~J.},
      author={Gu, Mile},
       title={Causal classification of spatiotemporal quantum correlations},
        date={2024Sep},
     journal={Phys. Rev. Lett.},
      volume={133},
       pages={110202},
      eprint={2306.09336},
         url={https://link.aps.org/doi/10.1103/PhysRevLett.133.110202},
}

\bib{SongParzygnat2025}{article}{
      author={Song, Minjeong},
      author={Parzygnat, Arthur~J.},
       title={Bipartite quantum states admitting a causal explanation},
        date={2025},
     journal={AVS Quantum Sci.},
      volume={7},
       pages={045002},
      eprint={2507.14278},
         url={https://doi.org/10.1116/5.0303300},
}

\bib{Sorce2023}{article}{
      author={Sorce, Jonathan},
       title={An intuitive construction of modular flow},
        date={2023-12},
        ISSN={1029-8479},
     journal={J. High Energ. Phys.},
      volume={2023},
      number={12},
       pages={79},
      eprint={2309.16766},
         url={http://dx.doi.org/10.1007/JHEP12(2023)079},
}

\bib{Sorce2024}{article}{
      author={Sorce, Jonathan},
       title={A short proof of {T}omita's theorem},
        date={2024},
        ISSN={0022-1236},
     journal={J. Func. Anal.},
      volume={286},
      number={12},
       pages={110420},
      eprint={2309.16762},
         url={http://dx.doi.org/10.1016/j.jfa.2024.110420},
}

\bib{St55}{article}{
      author={Stinespring, W.~Forrest},
       title={Positive functions on {$C^*$}-algebras},
        date={1955},
     journal={Proc. Am. Math. Soc.},
      volume={6},
      number={2},
       pages={211\ndash 216},
         url={https://doi.org/10.2307/2032342},
}

\bib{StratilaModular2020}{book}{
      author={Str{\u{a}}til{\u{a}}, {\c{S}}erban~Valentin},
       title={Modular theory in operator algebras},
     edition={2nd revised edition},
      series={Camb.-IISc Ser.},
   publisher={Cambridge University Press},
     address={Cambridge, UK},
        date={2020},
        ISBN={978-1-108-48960-7; 978-1-108-77903-6},
         url={https://doi.org/10.1017/9781108489607},
}

\bib{StratilaZsido2019}{book}{
      author={Str{\u{a}}til{\u{a}}, {\c{S}}erban~Valentin},
      author={Zsid{\'o}, L{\'a}szl{\'o}},
       title={Lectures on von {Neumann} algebras},
     edition={2nd edition},
      series={Camb.-IISc Ser.},
   publisher={Cambridge University Press},
        date={2019},
        ISBN={978-1-108-49684-1; 978-1-108-65497-5},
         url={https://doi.org/10.1017/9781108654975},
}

\bib{Takesaki1970}{book}{
      author={Takesaki, Masamichi},
       title={Tomita's theory of modular {Hilbert} algebras and its
  applications},
      series={Lect. Notes Math.},
   publisher={Springer, Berlin, Heidelberg},
        date={1970},
      volume={128},
         url={https://doi.org/10.1007/BFb0065832},
}

\bib{Takesaki1972conditional}{article}{
      author={Takesaki, Masamichi},
       title={Conditional expectations in von~{N}eumann algebras},
        date={1972},
     journal={J. Funct. Anal.},
      volume={9},
      number={3},
       pages={306\ndash 321},
         url={https://doi.org/10.1016/0022-1236(72)90004-3},
}

\bib{Takesaki1973CrossProduct}{article}{
      author={Takesaki, Masamichi},
       title={Duality for crossed products and the structure of von {N}eumann
  algebras of type {III}},
        date={1973},
     journal={Acta Math.},
      volume={131},
      number={1},
       pages={249\ndash 310},
         url={https://doi.org/10.1007/BF02392040},
}

\bib{Tak1}{book}{
      author={Takesaki, Masamichi},
       title={Theory of operator algebras {I}},
   publisher={Springer},
     address={New York, NY},
        date={1979},
      volume={124},
        ISBN={3-540-42248-X},
         url={https://doi.org/10.1007/978-1-4612-6188-9},
}

\bib{Tak2}{book}{
      author={Takesaki, Masamichi},
       title={Theory of operator algebras {II}},
      series={Encycl. Math. Sci.},
   publisher={Springer},
     address={Berlin, Heidelberg},
        date={2003},
      volume={125},
        ISBN={3-540-42914-X},
         url={https://doi.org/10.1007/978-3-662-10451-4},
}

\bib{TKRWV10}{article}{
      author={Temme, Kristan},
      author={Kastoryano, Michael~J.},
      author={Ruskai, Mary~Beth},
      author={Wolf, Michael~M.},
      author={Verstraete, Frank},
       title={The $\chi^2$-divergence and mixing times of quantum {M}arkov
  processes},
        date={201012},
        ISSN={0022-2488},
     journal={J. Math. Phys.},
      volume={51},
      number={12},
       pages={122201},
      eprint={1005.2358},
         url={https://doi.org/10.1063/1.3511335},
}

\bib{Teschl2014}{book}{
      author={Teschl, Gerald},
       title={Mathematical methods in quantum mechanics {With} applications to
  {Schr{\"o}dinger} operators.},
     edition={2nd ed.},
      series={Grad. Stud. Math.},
   publisher={American Mathematical Society},
     address={Providence, RI},
        date={2014},
      volume={157},
        ISBN={978-1-4704-1704-8},
         url={https://doi.org/10.1090/gsm/157},
}

\bib{TiFuWu2026}{misc}{
      author={Ting, Ouyang},
      author={Fullwood, James},
      author={Wu, Zhen},
       title={Operational time-reversal symmetry for unital qubit channels},
        date={2026},
         url={https://arxiv.org/abs/2605.10375},
        note={Available at
  \href{https://arxiv.org/abs/2605.10375}{arXiv:2605.10375}},
}

\bib{Topping1971}{book}{
      author={Topping, David~M.},
       title={Lectures on von {N}eumann algebras},
   publisher={Van Nostrand Reinhold Company},
        date={1971},
      volume={36},
}

\bib{Ts22}{article}{
      author={Tsang, Mankei},
       title={Generalized conditional expectations for quantum retrodiction and
  smoothing},
        date={2022},
     journal={Phys. Rev. A},
      volume={105},
       pages={042213},
      eprint={1912.02711},
         url={https://link.aps.org/doi/10.1103/PhysRevA.105.042213},
}

\bib{Ts22b}{article}{
      author={Tsang, Mankei},
       title={Operational meaning of a generalized conditional expectation in
  quantum metrology},
        date={2023-11},
        ISSN={2521-327X},
     journal={{Quantum}},
      volume={7},
       pages={1162},
      eprint={2212.13162},
         url={https://doi.org/10.22331/q-2023-11-03-1162},
}

\bib{Um54}{article}{
      author={Umegaki, Hisaharu},
       title={Conditional expectation in an operator algebra},
        date={1954},
        ISSN={0040-8735},
     journal={T\^{o}hoku Math. J. (2)},
      volume={6},
       pages={177\ndash 181},
         url={https://doi.org/10.2748/tmj/1178245177},
}

\bib{ValdiviaMera2025}{article}{
      author={Valdivia-Mera, Gustavo},
       title={On the {U}nruh effect and the thermofield double state},
        date={2025},
     journal={Int. J. Mod. Phys. D},
      volume={34},
      number={07n08},
       pages={2530002},
      eprint={2001.09869},
         url={https://doi.org/10.1142/S0218271825300022},
}

\bib{Weidmann1980}{book}{
      author={Weidmann, Joachim},
       title={Linear operators in {Hilbert} spaces},
      series={Grad. Texts Math.},
   publisher={Springer},
     address={New York, NY},
        date={1980},
      volume={68},
         url={https://doi.org/10.1007/978-1-4612-6027-1},
}

\bib{Weinberg1995QFT1}{book}{
      author={Weinberg, Steven},
       title={The quantum theory of fields},
    subtitle={Volume {I}: Foundations},
   publisher={Cambridge University Press},
     address={Cambridge, UK},
        date={1995},
        ISBN={9780521550017},
         url={https://doi.org/10.1017/CBO9781139644167},
}

\bib{Wilde15}{article}{
      author={Wilde, Mark~M.},
       title={Recoverability in quantum information theory},
        date={2015},
        ISSN={1471-2946},
     journal={Proc. R. Soc. A},
      volume={471},
      number={2182},
       pages={20150338},
      eprint={1505.04661},
         url={http://dx.doi.org/10.1098/rspa.2015.0338},
}

\bib{WildeQIT16}{book}{
      author={Wilde, Mark~M.},
       title={Quantum information theory},
     edition={2},
   publisher={Cambridge University Press},
        date={2016},
         url={http://dx.doi.org/10.1017/9781316809976.001},
}

\bib{Willard2004}{book}{
      author={Willard, Stephen},
       title={General topology},
   publisher={Dover Publications},
     address={Mineola, NY},
        date={2004},
        ISBN={9780486434797},
        note={Reprint of the 1970 Addison--Wesley edition},
}

\bib{Wirth2026}{misc}{
      author={Wirth, Melchior},
       title={The {KMS} and {GNS} spectral gap of quantum {M}arkov semigroups},
        date={2026},
         url={https://arxiv.org/abs/2604.21630},
        note={Available at
  \href{https://arxiv.org/abs/2604.21630}{arXiv:2604.21630}},
}

\bib{Witten18}{article}{
      author={Witten, Edward},
       title={{APS} medal for exceptional achievement in research: Invited
  article on entanglement properties of quantum field theory},
        date={2018},
     journal={Rev. Mod. Phys.},
      volume={90},
       pages={045003},
      eprint={1803.04993},
         url={https://link.aps.org/doi/10.1103/RevModPhys.90.045003},
}

\bib{Witten2022CrossedProduct}{article}{
      author={Witten, Edward},
       title={Gravity and the crossed product},
        date={2022},
     journal={J. High Energy Phys.},
      volume={2022},
       pages={8},
      eprint={2112.12828},
         url={https://doi.org/10.1007/JHEP10(2022)008},
}

\bib{Woronowicz1974}{article}{
      author={Woronowicz, Stanis{\l}aw~L.},
       title={Selfpolar forms and their applications to the {{\(C^*\)}}-algebra
  theory},
        date={1974},
        ISSN={0034-4877},
     journal={Rep. Math. Phys.},
      volume={6},
       pages={487\ndash 495},
         url={https://doi.org/10.1016/S0034-4877(74)80012-1},
}

\bib{Yanofsky2024Monoidal}{book}{
      author={Yanofsky, Noson~S.},
       title={Monoidal category theory},
    subtitle={Unifying concepts in mathematics, physics, and computing},
   publisher={The MIT Press},
     address={Cambridge, MA},
        date={2024},
        ISBN={978-0-262-04939-9},
  url={https://mitpress.mit.edu/9780262049399/monoidal-category-theory/},
}

\bib{Yngvason2005}{article}{
      author={Yngvason, Jakob},
       title={The role of type {III} factors in quantum field theory},
        date={2005},
        ISSN={0034-4877},
     journal={Rep. Math. Phys.},
      volume={55},
      number={1},
       pages={135\ndash 147},
      eprint={math-ph/0411058},
         url={https://doi.org/10.1016/S0034-4877(05)80009-6},
}

\bib{Zellner1988}{article}{
      author={Zellner, Arnold},
       title={Optimal information processing and {Bayes}'s theorem},
        date={1988},
     journal={Am. Stat.},
      volume={42},
      number={4},
       pages={278\ndash 280},
         url={https://doi.org/10.1080/00031305.1988.10475585},
}

\end{biblist}
\end{bibdiv}

\Addresses

\end{document}